\documentclass{amsart}
\usepackage{amsmath,amssymb,amsthm, tikz-cd, hyperref}
\usepackage{xcolor}
\usepackage{soul}

\newtheorem{theorem}{Theorem}[section]
\newtheorem{lemma}[theorem]{Lemma}

\newtheorem{proposition}[theorem]{Proposition}
\newtheorem{question}{Question}
\theoremstyle{definition}

\newtheorem{example}[theorem]{Example}

\newtheorem{remark}[theorem]{Remark}
\newtheorem*{overview}{Overview}
\newtheorem*{acknowledgements}{Acknowledgements}

\newtheorem{thmintro}{Theorem}

\newcommand{\co}{\colon\thinspace}

\newcommand{\del}{\partial}

\DeclareMathOperator{\diam}{diam}
\DeclareMathOperator{\UW}{UW}
\DeclareMathOperator{\length}{length}

\DeclareMathOperator{\RP2}{\mathbb{RP}^2}

\begin{document}

\title{1-Uryson width and covers}
\author{Hannah Alpert}
\address{Auburn University, 221 Parker Hall, Auburn, AL 36849, USA}
\email{hca0013@auburn.edu}
\author{Arka Banerjee}
\address{Ramakrishna Mission Vivekananda Educational and Research Institute, Belur, West Bengal 711202, India}
\email{banerjee20arka@gmail.com}
\author{Panos Papasoglu}
\address{Mathematical Institute, University of Oxford, Andrew Wiles Building, Woodstock Road, Oxford OX2 6GG, U.K.}
\email{Panagiotis.Papazoglou@maths.ox.ac.uk}
\subjclass[2020]{53C23, 53C20}
\keywords{Uryson width, surface, Riemannian polyhedron}

\begin{abstract}
We investigate the following question: Do there exist Riemannian polyhedra $X$ such that the 1-Uryson width of their universal covers $\UW_1(\widetilde{X})$ is bounded but $\UW_1(X)$ is arbitrarily large?
We rule out two specific cases: when $\pi_1(X)$ is virtually cyclic and when $X$ is a Riemannian surface.
More specifically,
we show that if $X$ is a compact polyhedron with a virtually cyclic fundamental group, then its 1-Uryson width is bounded by the 1-Uryson width  of its universal cover $\widetilde{X}$, up to a multiplicative constant. Precisely:
$$\UW_1(X) \leq 6 \cdot \UW_1(\widetilde{X}).$$
We show that if $X$ is a compact Riemannian surface then 
$$\UW_1(X) \leq \UW_1(\widetilde{X}).$$
Furthermore, we show that if there exist spaces $X$ for which $\UW_1(\widetilde{X})$ is bounded while $\UW_1(X)$ is arbitrarily large, then such examples must already appear in low dimensions. In particular, such $X$ can be found among Riemannian $2$-complexes.
\end{abstract}
\maketitle

\section{Introduction}
Uryson width originates in classical dimension theory. However, recently it
has proved to be a useful notion in Riemannian Geometry. The main applications
are in systolic inequalities (\cite{Gromov83}, \cite{Guth11}) and in the study of manifolds of positive scalar curvature \cite{Gromov20}. We recall the definition: the $k$-dimensional Uryson width of a metric space $X$, denoted $\UW_k(X)$, is
the infimal $\varepsilon$ such that there exists a continuous map $f : X \to  Y$ , where $Y$ is a
$k$-dimensional simplicial complex, for which each fiber $f^{-1}(y)$ has diameter at most $\varepsilon$. 

In this paper, we are interested in characterizing spaces with small 1-Uryson width.
More precisely, we have the following (posed originally in \cite{ABG21} 
for 3-manifolds).

\begin{question}[Main question]\label{main}
Let $\{X_n\}_{n=1}^\infty$ be a sequence of compact geodesic metric spaces and suppose that their universal covers satisfy $\UW_1(\widetilde{X_n}) \leq 1$.  Must the sequence $\UW_1(X_n)$ be bounded?
\end{question}

While one might intuitively expect the answer to be yes, very little is known about this question. 
M. Katz~\cite {Ka88} gave an affirmative answer in the case $\pi_1(X_n)$ is finite or $\mathbb Z$. Balitskiy-Berdnikov \cite{BB21} give an estimate of $\UW_1(X)$ in terms of $\dim H_1(X; \mathbb{Z}_2)$ assuming instead that the unit balls in $X$ have small $\UW_1$.

Gromov and Lawson show that in a closed 3-manifold of scalar curvature $\geq 1$ any homologically trivial curve bounds in its $2\pi$ neighborhood~\cite[Theorem 10.7]{GL83} and show that this property implies $\UW_1\leq 12\pi$ ~\cite[Corollary 10.11]{GL83}, but the proof seemingly requires some loops to be contractible, so it only works for, say, simply-connected manifolds. Positive answer to Question~\ref{main} would remove this limitation (at a cost of a constant factor). However, a uniform bound on $\UW_1$ of such manifolds has been already proven by other means in~\cite{LM20} (follows from Theorem 1.1 (b)).

We now state our results. Our first theorem generalizes and gives a different proof of the aforementioned result of Katz~\cite{Ka88}.

\begin{thmintro} \label{introthm:betti}
Let $X$ be a compact Riemannian polyhedron, and let $\widetilde{X}$ be the universal cover of $X$. Suppose $\pi_1(X)$ is virtually cyclic. Then we have
\[\UW_1(X) \leq 6 \cdot \UW_1(\widetilde{X}).\]
\end{thmintro}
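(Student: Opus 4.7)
The plan is to split into cases according to the structure of the virtually cyclic group $\pi_1(X)$: either finite or virtually infinite cyclic. In the second sub-case I would pass to the finite cover of $X$ corresponding to a $\mathbb{Z}$ subgroup of finite index, reducing to $\pi_1(X) \cong \mathbb{Z}$. The factor of $6$ in the target inequality strongly suggests that the Gromov--Lawson filling inequality cited in the introduction (simply connected with every loop bounding a $2$-chain in its $1$-neighborhood implies $\UW_1 \leq 6$), applied to $\widetilde{X}$ after rescaling, is the essential ingredient: it will let us convert $\UW_1(\widetilde{X}) \leq \varepsilon$ into a controlled filling radius on the simply connected space $\widetilde{X}$.

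For the case $\pi_1(X) \cong \mathbb{Z}$: set $\varepsilon = \UW_1(\widetilde{X})$ and fix a map $\tilde{f}\co \widetilde{X} \to T$ to a $1$-complex with fibers of diameter at most $\varepsilon$. Because $X$ is compact and $\mathbb{Z}$ acts cocompactly, $\widetilde{X}$ is a simply connected space with two ends, exchanged by the deck $\mathbb{Z}$-action. Using $\tilde{f}$ and the end structure, I would build a $\mathbb{Z}$-equivariant proper map $\widetilde{X} \to \mathbb{R}$: concretely, select a $\mathbb{Z}$-invariant bi-infinite path $\alpha \subset T$ connecting the two ends of $\tilde{f}(\widetilde{X})$, project $T$ onto $\alpha$ by closest-point projection, and compose with $\tilde{f}$. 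The fibers of the composition are preimages of bounded intervals in $T$; bounding their diameter by $6\varepsilon$ comes from the Gromov--Lawson filling estimate on $\widetilde{X}$. Quotienting by $\mathbb{Z}$ then yields a map $X \to S^1$ realizing $\UW_1(X) \leq 6\varepsilon$.

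For the case $\pi_1(X)$ finite: $\widetilde{X}$ is compact and simply connected. I would construct a $\pi_1(X)$-equivariant $1$-dimensional nerve-like decomposition of $\widetilde{X}$ starting from a $\UW_1$-realizing cover, and use the filling bound to amalgamate overlapping $G$-translates of cover elements without breaking the $1$-dimensional nerve property. Descending to $X$ gives the inequality.

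The main obstacle, in both cases, is producing a $\pi_1(X)$-equivariant $1$-dimensional target while losing only the constant factor $6$ in fiber diameter. Equivariantization by taking $G$-translates of a non-equivariant cover typically raises the nerve dimension, and a post-hoc projection to a lower-dimensional subcomplex threatens to enlarge fibers. The Gromov--Lawson filling bound is what I expect to make both issues repairable, and tracking the resulting constants carefully to obtain precisely the factor $6$ will be the delicate point.
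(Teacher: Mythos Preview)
Your plan has several genuine gaps.

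First, the reduction step does not work: passing to a finite cover $X'\to X$ with $\pi_1(X')\cong\mathbb{Z}$ and proving $\UW_1(X')\leq 6\,\UW_1(\widetilde{X})$ tells you nothing about $\UW_1(X)$ without a further argument, and bounding $\UW_1$ of a base by $\UW_1$ of a finite cover is exactly the kind of statement in question (and is \emph{not} the finite-$\pi_1$ case, since $\pi_1(X)$ is infinite).

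Second, in the $\pi_1\cong\mathbb{Z}$ case your construction breaks down at the outset: the tree $T$ carries no $\mathbb{Z}$-action, since the map $\tilde f\co\widetilde X\to T$ realizing $\UW_1(\widetilde X)$ is not equivariant. So there is no ``$\mathbb{Z}$-invariant bi-infinite path $\alpha\subset T$'' to project onto. Even ignoring equivariance, the fibers of closest-point projection $T\to\alpha$ are the (generally unbounded) subtrees branching off $\alpha$, so the composed map $\widetilde X\to T\to\alpha$ has no reason to have bounded fibers.

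Third, the Gromov--Lawson inequality is a red herring. Applied to $\widetilde X$ it only tells you $\UW_1(\widetilde X)\leq 6\varepsilon$ from the hypothesis $\UW_1(\widetilde X)\leq\varepsilon$, which is vacuous; it does not bound the diameter of the specific fibers you describe, and it gives no mechanism for producing an equivariant map. You correctly identify equivariantization as the main obstacle, but ``GL should make it repairable'' is not an argument.

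The paper's proof avoids equivariance altogether. It works directly on $X$: map $X$ to a graph by collapsing each connected component of each metric sphere $S(x_0,r)$ to a point, and bound the diameter of such a component $C$. Given $a,b\in C$, form geodesics $s_1=[x_0,a]$, $s_2=[x_0,b]$ and a path $\gamma\subset C$ from $a$ to $b$; in the virtually cyclic case one can choose exponents so that a suitable concatenation of the loops $s_1\gamma s_2^{-1}$ (and a second such loop through a midpoint of $\gamma$) is null-homotopic, hence lifts to a closed polygon in $\widetilde X$. A combinatorial lemma---any map from an $n$-gon to a tree has a fiber meeting three consecutive sides---applied to $\tilde f$ on this lifted polygon produces three points, one on a lift of each of $s_i,\gamma,s_j$, pairwise within $\UW_1(\widetilde X)$. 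A thin-triangle estimate then gives $d(a,b)\leq 3\,\UW_1(\widetilde X)$ in the finite case, and $6\,\UW_1(\widetilde X)$ after the midpoint splitting in the virtually-$\mathbb{Z}$ case. The factor $6$ thus has nothing to do with Gromov--Lawson.
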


For surfaces, we obtain a better upper bound for $\UW_1$ without any assumption on the fundamental groups.

\begin{thmintro}\label{thm:surface}
Let $\Sigma$ be a compact surface with a Riemannian metric and let $\widetilde{\Sigma}$ be the universal cover of $\Sigma$. Then we have 
\[\UW_1(\Sigma) \leq \UW_1(\widetilde{\Sigma}).
\]
\end{thmintro}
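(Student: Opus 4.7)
The plan is to upgrade an arbitrary $1$-Uryson map on $\widetilde\Sigma$ to a $\pi_1(\Sigma)$-equivariant one, and then descend it to $\Sigma$. Fix $\varepsilon > \UW_1(\widetilde\Sigma)$ and choose a continuous map $\tilde f \co \widetilde\Sigma \to Y$ with $Y$ a $1$-complex and $\diam \tilde f^{-1}(y) < \varepsilon$ for every $y$.

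First I would reduce to the case of a tree target. Because $\widetilde\Sigma$ is simply connected, $\tilde f$ lifts through the universal covering $T \to Y$, where $T$ is a tree; the lifted map has fibers contained in those of $\tilde f$ and hence of diameter $< \varepsilon$. So without loss of generality the target is a tree $T$.

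The heart of the argument is the equivariantization step. I would replace $T$ by the decomposition space $T'$ whose points are the connected components of fibers $\tilde f^{-1}(y)$, endowed with the quotient topology and with the natural projection $\tilde g \co \widetilde\Sigma \to T'$. Each fiber of $\tilde g$ is a single component of a fiber of $\tilde f$, so still has diameter $< \varepsilon$. Because this construction depends only on $\tilde f$ and the topology of $\widetilde\Sigma$, the deck action of $\pi_1(\Sigma)$ on $\widetilde\Sigma$ lifts canonically to an action on $T'$ for which $\tilde g$ is $\pi_1(\Sigma)$-equivariant.

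Finally the equivariant map descends to a continuous $f \co \Sigma \to T'/\pi_1(\Sigma)$, whose codomain is a graph. For $t \in T'$ and its image $[t]$, equivariance gives $f^{-1}([t]) = \pi\bigl(\tilde g^{-1}(t)\bigr)$, where $\pi \co \widetilde\Sigma \to \Sigma$ is the covering. Since $\pi$ is a local isometry, it does not increase diameters, so $\diam f^{-1}([t]) \leq \diam \tilde g^{-1}(t) < \varepsilon$. Letting $\varepsilon \downarrow \UW_1(\widetilde\Sigma)$ then yields the stated bound.

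The hard part will be verifying that the decomposition space $T'$ is genuinely a simplicial tree on which $\pi_1(\Sigma)$ acts by simplicial automorphisms, so that the quotient is a $1$-complex. This is precisely where the surface hypothesis bites: since $\Sigma$ has boundary, $\pi_1(\Sigma)$ is free and $\widetilde\Sigma$ is a simply connected planar $2$-manifold, so each compact connected fiber component separates $\widetilde\Sigma$ into pieces arranged in a tree-like fashion, and assembling these patterns across all fibers produces a tree. In higher dimensions this tree structure can fail, which is consistent with the paper's emphasis that the surface case achieves the sharp constant $1$, with no multiplicative loss relative to the virtually cyclic case.
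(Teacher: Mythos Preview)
The fatal gap is the equivariance step. You assert that because the decomposition space $T'$ ``depends only on $\tilde f$ and the topology of $\widetilde\Sigma$'', the deck action on $\widetilde\Sigma$ descends to $T'$. It does not: the partition of $\widetilde\Sigma$ into connected components of fibers of $\tilde f$ is built from the specific map $\tilde f$, which was never assumed $\pi_1(\Sigma)$-equivariant. A deck transformation $g$ sends a component of $\tilde f^{-1}(y)$ to a component of $(\tilde f\circ g^{-1})^{-1}(y)$, which has no reason to be a component of any fiber of $\tilde f$. For a pair of pants, a generic small-fiber map $\widetilde\Sigma\to T$ and its translate by a nontrivial deck transformation have entirely different fiber decompositions. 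A second red flag: nothing in your argument before the last paragraph uses the surface hypothesis, so if it worked it would give $\UW_1(X)\le\UW_1(\widetilde X)$ for every compact geodesic space $X$---exactly the open Question~\ref{main}.

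The paper's proof is essentially a careful, by-hand equivariantization. One chooses $r=\operatorname{rank}\pi_1(\Sigma)$ disjoint minimizing arcs $\gamma_1,\dots,\gamma_r$ cutting $\Sigma$ to a disk, and then (Lemma~\ref{lem:surface}, supported by Lemmas~\ref{lem:fill-in}--\ref{lem:draw-z}) modifies a given $D$-separator on $\widetilde\Sigma$ in a thin strip around each lift of each $\gamma_i$ so that it becomes standard (vertical slices) there, at the cost of a factor $1+\varepsilon$ in $D$. Only after this modification does restriction to a fundamental domain glue up consistently to a separator on $\Sigma$. The surface hypothesis enters through the planar separation arguments in those lemmas and through the existence of the arc system; the sharp constant $1$ comes from sending $\varepsilon\to 0$.
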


One reason why a positive answer to Question~\ref{main} in general might be challenging is that slight modifications on the question have negative answers. 
For instance, the answer to the Question~\ref{main} is negative if we replace $\UW_1$ with $\UW_2$.
There exist $4$-dimensional Riemannian manifolds $M$ and corresponding universal covers $\widetilde{M}$ such that $\UW_2(\widetilde{M})\leq 1$, yet $\UW_2(M)$ can be made arbitrarily large~\cite{ABG21}.
The same construction, adapted to lower dimensions, provides examples of closed Riemannian surfaces $M$ and corresponding covers $\widehat{M}$ such that $\UW_1(\widehat{M}) \leq 1$, while $\UW_1(M)$ can be made arbitrarily large. We now describe such examples.

\begin{example}\label{e:example}
In $\mathbb{R}^3$, take the standard cubic grid with the vertices in $\mathbb{Z}^3$. Let $Z$ be the one-dimensional skeleton of this grid, and let $Z^\vee$ be the one-dimensional skeleton of the dual grid, that is, $Z^\vee = Z + (\frac12, \frac12, \frac12)$. Let $\widehat{M}$ consist of the points
equidistant from $Z$ and $Z^\vee$. After a slight smoothing, $\widehat{M}$ becomes a Riemannian surface.  
The manifold $M$ is defined as the quotient of $\widehat{M}$ by the lattice $\Lambda$ generated by the following vectors: $v_1 = (R,0,0)$, $v_2 = (0,R,0)$, $v_3 = (\frac12,\frac12,\frac12+R)$ where $R$ is an integer. These three translations preserve $\widehat{M}$, and the last one swaps $Z$ and $Z^\vee$. Gromov's ``fiber contraction'' argument can be used to show that $\UW_1(M)$ is of order $R$. Namely, one can apply \cite[Corollary~2.3]{BB21} to the evident inclusion of $M$ in the $3$-dimensional torus
$\mathbb{R}^3 / \Lambda$; it is nontrivial at the level of $H_2(\cdot;\mathbb{Z}_2)$, 
since a generic circle in the torus parallel to $v_3$ intersects $M$ an odd number of times. 
Indeed, if a lift of such a circle to $\mathbb{R}^3$ begins closer to $Z$, 
it necessarily ends closer to $Z^\vee$.
Therefore, the width $\UW_1(M)$, sandwiched between the covexity radius of  $\mathbb{R}^3 / \Lambda$ and $\diam M$, is of order $R$ as well. As for the cover $\widehat{M}$, it can be projected to $Z$ with fibers of size $\approx 1$, so $\UW_1(\widehat M)$ is of order $1$.
\end{example}

We remark that $\widehat{M}$ in the above example is not the universal cover and therefore does not produce a negative answer of the Question~\ref{main}. In fact, the universal cover of a closed Riemannian surface is either a sphere or cannot have bounded Uryson 1-width. However, we show that if a negative answer to Question~\ref{main} exists, those examples can be found in the class of Riemannian $2$-complexes.
More precisely,

\begin{thmintro}\label{thm:low dim reduction}
Suppose that $\{X_n\}_{n=1}^\infty$ is a sequence of compact Riemannian polyhedra such that the ratio $\displaystyle\frac{\UW_1(X_n)}{\UW_1(\widetilde{X_n})}$ is unbounded and $\dim(X_n)$ is bounded. Then
\begin{itemize}
\item Then there exists $\{Z_n\}_{n=1}^\infty$ of Riemannian 2-complex such that the ratio $\displaystyle\frac{\UW_1(Z_n)}{\UW_1(\widetilde{Z_n})}$ is unbounded.
\item Then there exists a related sequence $\{Z_n\}_{n=1}^\infty$ of closed Riemannian 4-manifolds such that the ratio $\displaystyle\frac{\UW_1(Z_n)}{\UW_1(\widetilde{Z_n})}$ is unbounded.
\end{itemize}
\end{thmintro}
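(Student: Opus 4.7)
The plan is to carry out two reductions in sequence: first from general polyhedra to $2$-complexes by taking the $2$-skeleton, then from $2$-complexes to closed $4$-manifolds by thickening in $S^5$.

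For the $2$-complex reduction, take $Z_n := X_n^{(2)}$ (with the intrinsic Riemannian polyhedron metric) for a fine bounded-geometry subdivision of $X_n$ in which every cell has diameter at most $\delta_n \to 0$ and every simplex is close to a regular Euclidean simplex. Since attaching cells of dimension $\geq 3$ does not change $\pi_1$, $\widetilde{Z_n}$ is precisely the $2$-skeleton of $\widetilde{X_n}$ with the induced intrinsic metric. The bounded-geometry hypothesis gives a uniform constant $L_d$, depending only on $d = \dim X_n$, such that within each top-dimensional simplex the intrinsic distance on the $2$-skeleton is at most $L_d$ times the ambient distance; chaining across simplices then shows the inclusion $\widetilde{Z_n} \hookrightarrow \widetilde{X_n}$ is $L_d$-bi-Lipschitz, so any map $\widetilde{X_n} \to Y$ witnessing $\UW_1(\widetilde{X_n})$ restricts to a map on $\widetilde{Z_n}$ with fiber diameters at most $L_d \cdot \UW_1(\widetilde{X_n})$. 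For the opposite direction, I extend any $f \co Z_n \to Y$ to $X_n \to Y$ cell by cell: for each $k$-cell $e^k$ with $k \geq 3$ and attaching map $\alpha \co S^{k-1} \to X_n^{(k-1)}$, set $Y_e := f(\alpha(S^{k-1}))$, which is a compact connected subset of the graph $Y$; since $S^{k-1}$ is simply connected, $f \circ \alpha$ lifts to the universal-cover tree $\widetilde Y$, where its image is a subtree and hence contractible inside itself. Projecting the contraction back down gives an extension $\bar f \co e^k \to Y_e$; any new fiber point lies within $\diam(e^k) \leq \delta_n$ of a fiber point on $\partial e^k$, so inducting on skeleta yields $\UW_1(X_n) \leq \UW_1(Z_n) + C_d \delta_n$. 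Taking $\delta_n \to 0$ transfers the unboundedness to the ratio $\UW_1(Z_n)/\UW_1(\widetilde{Z_n})$.

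For the $4$-manifold reduction, I piecewise-linearly embed each $Z_n$ into $S^5$ (always possible for a $2$-complex), take a regular neighborhood $N_n$ of thickness $\eps_n \to 0$, and let $M_n := \partial N_n$, a closed PL $4$-manifold with the induced Riemannian metric. Since $Z_n$ has codimension $3$, general position gives $\pi_1(M_n) = \pi_1(N_n) = \pi_1(Z_n)$, and the same holds upstairs, with $\widetilde{M_n}$ the boundary of a regular neighborhood of $\widetilde{Z_n}$ in the pullback cover of $S^5$. The block-bundle retraction $r_n \co N_n \to Z_n$ is $(1+o(1))$-Lipschitz, so pulling back a $\UW_1$-map of $\widetilde{Z_n}$ along $\widetilde{r_n}|_{\widetilde{M_n}}$ yields $\UW_1(\widetilde{M_n}) \leq \UW_1(\widetilde{Z_n}) + O(\eps_n)$. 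For the opposite direction, the normal block-bundle of $Z_n$ has rank $3$ over a $2$-complex, so the obstructions to a nowhere-zero section lie in $H^k(Z_n; \pi_{k-1}(S^2))$ and all vanish (either $k \geq 3$ and $H^k(Z_n) = 0$, or $k \leq 2$ and $\pi_{k-1}(S^2) = 0$). A PL section $s_n \co Z_n \to M_n$ can be chosen with a Lipschitz constant $L$ controlled by the triangulation geometry; then any $f \co M_n \to Y$ with $D$-fibers gives $f \circ s_n \co Z_n \to Y$ with fibers of diameter at most $L D$, so $\UW_1(Z_n) \leq L \cdot \UW_1(M_n)$, and the unbounded ratio transfers to $\{M_n\}$.

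I expect the hardest step to be the production of the PL section $s_n$ with a \emph{uniform} Lipschitz constant: cohomological obstruction theory only provides continuous existence, and the needed metric control must be tied carefully to the bounded-geometry triangulation of $Z_n$. A tempting shortcut via the homotopy equivalence $[M_n, Y] \cong [Z_n, Y]$ (arising from the vanishing of the relevant $S^2$-bundle obstructions on graph targets) is only a homotopy-level statement and is insufficient for the required fiber-diameter bound; similarly, the $2$-complex step depends on having a genuine bi-Lipschitz comparison, rather than a merely topological identification, between the intrinsic metric on $X_n^{(2)}$ and its restriction from $X_n$.
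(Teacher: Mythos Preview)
Your overall two-step strategy (pass to the $2$-skeleton, then thicken in codimension $3$) matches the paper's, but the argument has a genuine gap in the $2$-complex step and diverges from the paper in the $4$-manifold step.

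\textbf{The $2$-skeleton step.} Your claim that ``chaining across simplices'' yields an $L_d$-bi-Lipschitz inclusion $\widetilde{Z_n}\hookrightarrow\widetilde{X_n}$ is not justified: a geodesic in $X_n$ between two points of the $2$-skeleton enters and exits each top-simplex through faces which need not lie in the $2$-skeleton, so the within-simplex comparison does not apply segment-by-segment. Perturbing the crossing points into the $2$-skeleton costs an additive error \emph{per simplex traversed}, and nothing in ``bounded geometry'' alone prevents this from swamping the multiplicative constant. The paper confronts exactly this issue and resolves it by an extra detour you omit: it \emph{first} applies the regular-neighborhood construction to turn the $X_n$ into closed manifolds, then invokes Bowditch's cubulation theorem to replace the metric by a piecewise-Euclidean one built from standard cubes. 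In that model it proves that a geodesic meets each cube at most once (via an embedding as a subcomplex of a large cube with coordinate retractions), so the number of perturbations is at most the total cube count $M_n$; one then takes the subdivision scale $\epsilon$ small relative to $M_n$ and the local injectivity radius $r_n$ to obtain a \emph{coarse} comparison $d_{\widetilde{Y_n}}(a,b)\le\sqrt{d}\,(2\,d_{\widetilde{X_n}}(a,b)+1)$ rather than a clean bi-Lipschitz bound. Your awareness that ``a genuine bi-Lipschitz comparison'' is needed is well-placed, but the chaining sketch does not supply one.

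\textbf{The $4$-manifold step.} Here the paper differs from you in two ways. First, it does \emph{not} use the metric induced from the ambient embedding; instead it cites Ferry--Okun to equip $\partial N$ with a Riemannian metric for which the projection $p\co\partial N\to K$ distorts distances by an arbitrarily small additive constant. Your assertion that the block-bundle retraction is $(1+o(1))$-Lipschitz for the induced metric is not clear near the singular strata of $Z_n$, where point-preimages can be complicated. Second, and more importantly for the direction $\UW_1(Z_n)\le C\cdot\UW_1(M_n)$, the paper bypasses the Lipschitz-section problem you flag as hardest: it builds instead a coarse inverse $g\co K^{(2)}\to M$ skeleton-by-skeleton, using only that preimages under $p$ of small open sets are path-connected (for edges) and simply connected (for $2$-cells). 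This gives $d(g(x),g(y))\ge d(x,y)-3$ without any obstruction theory or uniform Lipschitz control on a section, and combined with Lemma~\ref{l:width of subcomplex} yields $\UW_1(X_n)\le \UW_1(M_n)+O(1)$.
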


\begin{overview}
    The paper is organized as follows. In Section~\ref{s2}, we establish Theorem~\ref{introthm:betti}. Section~\ref{s3} is devoted to the proof of Theorem~\ref{thm:surface}, and in Section~\ref{s4}, we prove Theorem~\ref{thm:low dim reduction}.
\end{overview}

\begin{acknowledgements}
We thank Aleksandr Berdnikov for his valuable feedback on an earlier draft of the paper. We are also grateful to the referees for their detailed report and insightful comments, which greatly improved the paper. In particular, they identified a gap in the proof of Proposition 4.6, which has now been corrected.
\end{acknowledgements}

\section{spaces with virtually cyclic fundamental groups}\label{s2}

In this section our goal is to prove Theorem~\ref{introthm:betti}, which deals with
the case when $\pi _1(X)$ is virtually cyclic. We remark that when $\UW_1(\widetilde{X})$ is bounded, $\widetilde{X}$ is quasi-isometric to a tree. Moreover, if $X$ is compact, then $\pi_1(X)$ is quasi-isometric to $\widetilde{X}$ and hence quasi-isometric to a tree. 
So by Stallings' theorem on groups with more than one end \cite{stallings1971group}, $\pi_1(X)$ admits a non-trivial splitting as an amalgamated product or an HNN extension over some finite group. We note that the property of being finitely presented is invariant under quasi-isometries \cite[Ch. I, 8.24]{B-H}, so $\widetilde{X}$ is finitely presented as it is quasi-isometric to a free group. We can now keep splitting $\pi_1(X)$ and by Dunwoody's accessibility theorem for finitely presented groups \cite{Dunwoody} this process of splitting over finite groups eventually terminates.
In the terminal decomposition, $\pi_1(X)$ becomes the fundamental group of a finite graph of groups with finite vertex and edge groups.
By Bass--Serre theory, such a group always contains a free group of finite index.

The proof idea for Theorem~\ref{introthm:betti} comes from~\cite[Corollary 10.11]{GL83}. 
Fix $x_0\in X$ and consider the spheres around $x_0$, $S_r=S(x_0,r)$.
Without loss of generality, we may assume that $X$ has a piecewise flat metric.
We define a continuous map $g:X\to Y$ where $Y$ is a graph, by mapping
connected components of $S_r$ to points (formally $Y$ is a quotient
space of $X$ defined by declaring that two points of $X$ are equivalent
if and only if they lie in the same connected component of $S_r$ for some $r\geq 0$). 
To prove the Theorem~\ref{introthm:betti}, we need to show that the diameter of a connected component of
$S_r$ is small when $\UW_1(\widetilde{X})$ is small. 

To this end, we need the next two lemmas. The first lemma provides a condition under which the diameter of a connected component $C$ of $S_r$ is small. However, in applying this lemma to the proof of Theorem~A, we also require these components to be path connected, which need not always be the case. To address this, we instead formulate the lemma for a small neighborhood of $C$, which is always path connected.

\begin{lemma}\label{l:thin triangle to diam}
    Let $x_0,a,b\in X$ where $a,b$ both live inside a $\delta$-neighborhood $N_\delta(C)$ of a connected component $C$ of $S_r=S(x_0,r)$ for some $r$. Suppose $s_1=[x_0,a]$ and $s_2=[x_0,b]$.
    Suppose $x_1\in s_1$, $x_2\in s_2$ and $x_3\in N_\delta(C)$ such that $d(x_i,x_j)\leq \varepsilon$ for all $i,j$. Then $d(a,b)\leq 3\varepsilon+4\delta$. (See figure~\ref{f:thin to diam}.)
\end{lemma}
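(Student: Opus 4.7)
The idea is to exploit the fact that $s_1$ and $s_2$ are genuine geodesics, so that length along them equals the decrease in distance from $x_0$. The plan is to triangulate
\[
d(a,b) \leq d(a,x_1) + d(x_1,x_2) + d(x_2,b),
\]
where the middle term is already controlled by $\varepsilon$, and then to bound $d(a,x_1)$ and $d(x_2,b)$ individually by $2\delta + \varepsilon$.

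For the bound on $d(a,x_1)$: since $x_1$ lies on the geodesic $s_1=[x_0,a]$, I have the identity $d(a,x_1) = d(x_0,a) - d(x_0,x_1)$. The first term is bounded above by $r+\delta$ because $a\in N_\delta(C)\subset N_\delta(S_r)$. For the lower bound on $d(x_0,x_1)$, I would use the auxiliary point $x_3$: since $x_3\in N_\delta(C)$, one has $d(x_0,x_3)\geq r-\delta$, and since $d(x_1,x_3)\leq\varepsilon$ by the thin-triangle hypothesis, the triangle inequality gives $d(x_0,x_1)\geq r-\delta-\varepsilon$. Subtracting yields $d(a,x_1)\leq 2\delta+\varepsilon$. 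The argument for $d(x_2,b)$ is symmetric, using $d(x_2,x_3)\leq\varepsilon$.

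Putting the three pieces together gives
\[
d(a,b) \leq (2\delta+\varepsilon) + \varepsilon + (2\delta+\varepsilon) = 3\varepsilon + 4\delta,
\]
as desired.

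\textbf{Main obstacle.} There is essentially no obstacle once the right decomposition is identified; the only subtle point is realizing that the role of $x_3$ is purely to certify that $x_1$ and $x_2$ cannot be much closer to $x_0$ than $r$. Without $x_3$, one could imagine $x_1$ sitting very early on $s_1$, making $d(a,x_1)$ as large as $r+\delta$; it is the condition $x_3\in N_\delta(C)$ together with $d(x_1,x_3), d(x_2,x_3)\leq\varepsilon$ that pins $x_1,x_2$ near the sphere of radius $r$ and forces the bound. I would emphasize this in the write-up, since it also clarifies how the lemma will be applied later: the triple $(x_1,x_2,x_3)$ should be thought of as a small fiber of the eventual Uryson map, witnessing that two geodesic rays from $x_0$ come close to each other near $S_r$.
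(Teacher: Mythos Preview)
Your proof is correct and essentially identical to the paper's: the paper also decomposes $d(a,b)\leq d(a,x_1)+d(x_1,x_2)+d(x_2,b)$, uses $d(a,x_1)=d(x_0,a)-d(x_0,x_1)$, and bounds this by $\varepsilon+2\delta$ via the same comparison with $d(x_0,x_3)$ (writing $d(x_0,a)\leq d(x_0,x_3)+2\delta$ and then applying the reverse triangle inequality, which is exactly your argument routed through $r$). The only cosmetic difference is that you make the radius $r$ explicit in the intermediate estimates, whereas the paper cancels it implicitly.
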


\begin{figure}[htp]
\begin{center}
\begin{tikzpicture}
    \draw (5,0) arc (0:30:5cm);
    \draw (0,0) -- (5,0);
    \draw (0,0) --(4.33012701892,2.5);
\node[circle, draw, fill=black, inner sep=2pt, label=$\quad a$]  at (5,0) {};
\node[circle, draw, fill=black, inner sep=2pt, label=$b$]  at (4.33012701892,2.5) {};
\node[circle, draw, fill=black, inner sep=2pt, label=$x_0$]  at (0,0) {};
\node[circle, draw, fill=black, inner sep=2pt, label=$x_1 \qquad$]  at (4,0) {};
\node[circle, draw, fill=black, inner sep=2pt, label=$x_2$]  at (3.46410161514, 2) {};
\node[circle, draw, fill=black, inner sep=2pt, label=$\quad x_3$]  at (4.82962913145,1.29409522551) {};
\draw[black,thick,dashed]  (3.46410161514, 2) -- (4,0);
\draw[black,thick,dashed]  (3.46410161514, 2) -- (4.82962913145,1.29409522551);
\draw[black,thick,dashed] (4.82962913145,1.29409522551) -- (4,0);
\end{tikzpicture}
\caption{If $x_1,x_2,x_3$ are close to each other then $a$ and $b$ will also be close to each other.}\label{f:thin to diam}
\end{center}
\end{figure}
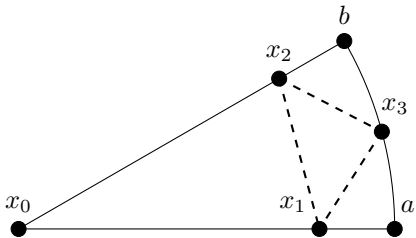
\begin{proof}
    Note that
   \begin{align*}
d(x_1,a)&= d(x_0,a)-d(x_0,x_1) \\
&\leq d(x_0,x_3)+2\delta-d(x_0,x_1)\\
&\leq d(x_1,x_3)+2\delta\\
&\leq \varepsilon+2\delta
   \end{align*}
   Similarly, $d(x_2,b)\leq \varepsilon+2 \delta$. Therefore
 \begin{align*}
d(a,b)&\leq d(a,x_1)+d(x_1,x_2)+d(x_2,b)\\
&\leq (\varepsilon+2\delta)+\varepsilon+(\varepsilon+2\delta)\\
&\leq 3\varepsilon +4\delta
\end{align*}
 \end{proof}

As illustrated in Figure~\ref{f:thin to diam}, in order to apply Lemma~\ref{l:thin triangle to diam}, we require three sides of specific triangles to contain points that are mutually close to one another. 
 This is where we are going to need the hypothesis that $\UW_1(\widetilde{X})$ is small.
That means there is a map from $\widetilde{X}$ to a tree that has small fibers.
The next lemma is a generalization of the statement that any map from a triangle to a tree has a fiber that intersects all three sides of the triangle.

\begin{lemma}\label{l:loops to tree}
    Let $f:P\rightarrow T$ be a map from an $n$-gon $P$ to a tree $T$ and $n\geq 3$.  Then there exists a fiber of the map $f$ that intersects three consecutive edges of $P$.
\end{lemma}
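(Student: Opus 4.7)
The plan is to reduce the statement to Helly's theorem for subtrees of a tree, combined with an induction on $n$.

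For each edge $e_i$ of $P$, the image $I_i := f(e_i)$ is a compact, connected subset of $T$, hence a subtree. Consecutive edges share a vertex, so $f(v_{i+1}) \in I_i \cap I_{i+1}$ and consecutive images automatically intersect. The desired conclusion is the existence of some $i$ with $I_{i-1} \cap I_i \cap I_{i+1} \neq \emptyset$. By Helly's theorem for subtrees (pairwise intersecting subtrees of a tree have nonempty common intersection), this reduces to finding $i$ such that $I_{i-1} \cap I_{i+1} \neq \emptyset$, since the other two pairwise intersections are automatic.

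I would prove this by induction on $n$. The base case $n=3$ is immediate: in a triangle the edges $e_3$ and $e_1$ are also consecutive, so all three pairwise intersections are witnessed by vertex images and Helly gives a common point. For the inductive step with $n \geq 4$, the idea is to merge two consecutive edges (say $e_n$ and $e_1$, sharing $v_1$) into a single edge $e' := e_n \cup e_1$ with image $I_n \cup I_1$; this is a subtree, since $I_n \cap I_1 \ni f(v_1)$. The restriction of $f$ then defines a continuous map from the resulting $(n-1)$-gon to $T$, and the inductive hypothesis produces three consecutive edges of this smaller polygon with a common image point $p$.

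If this triple consists entirely of original edges, we are done. Otherwise it involves $e'$ (at the start, middle, or end of the triple), and the common point satisfies $p \in I_n \cup I_1$, so $p$ belongs to $I_n$ or to $I_1$. A short sub-case analysis, paired with one more invocation of Helly, then identifies three consecutive \emph{original} edges with a common image point. For example, if the triple is $(e_{n-1}, e', e_2)$ and $p \in I_1$, then $p \in I_{n-1} \cap I_1$, and the subtrees $I_{n-1}, I_n, I_1$ pairwise intersect at $f(v_n)$, $f(v_1)$, and $p$, so Helly produces a point common to all three and delivers the original consecutive triple $(e_{n-1}, e_n, e_1)$. The remaining sub-cases are entirely symmetric, so no serious obstacle is expected; the main task is careful bookkeeping through the cases.
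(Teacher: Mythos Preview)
Your proof is correct and follows essentially the same inductive structure as the paper: both merge two consecutive edges to reduce an $(n+1)$-gon to an $n$-gon, apply the inductive hypothesis, and then handle the case where the merged edge appears in the resulting triple by a ``three-paths-in-a-tree'' argument. The only difference is cosmetic: you package that argument as Helly's theorem for subtrees, whereas the paper proves the needed $n=3$ instance directly by hand and then reuses it in the inductive step.
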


\begin{proof}
We prove the claim by induction on $n$.

For $n=3$, take a triangle $P$ with vertices $v_0,v_1,v_2$ and edges $e_0=[v_0,v_1]$, $e_1=[v_1,v_2]$, and $e_2=[v_2,v_0]$. 
Let $f:P\rightarrow T$ be a map to a tree.
Since $T$ is a tree, any path between two points in $T$ has to contain the unique geodesic between those points.
In particular, $[f(v_0),f(v_2)]\subset f(e_0\cup e_1)$.
Since $[f(v_0),f(v_2)]$ is connected, $f(e_0)\cap [f(v_0),f(v_2)]$ and $f(e_1)\cap [f(v_0),f(v_2)]$ intersect because they cover $[f(v_0),f(v_2)]$.
In other words, there exist $x\in e_0$ and $y\in e_1$ such that $f(x)=f(y)$ and $ f(x)\in[f(v_0),f(v_2)]$.
Since $[f(v_0),f(v_2)]\subset f(e_2)$ and $f(x)\in [f(v_0),f(v_2)]$, there exists $z\in e_3$ such that $f(z)=f(x)$.
The claim follows.

Suppose that the claim is true for any $n$-gon for some $n\geq 3$.
We consider a map $f:P\rightarrow T$ where $P$ is an $(n+1)$-gon and $T$ is a tree.
Choose two consecutive edges of $P$, and treating their union as a single edge, we obtain a new polygon 
$P'$, with $n$ sides.
By the induction hypothesis, there exists a fiber of the induced map $f:P'\rightarrow T$ that intersects three consecutive edges of $P'$. 
It follows that, either that fiber intersects three consecutive edges of $P$, or the fiber intersects the first and the third edges of three consecutive edges in $P$. In the second case, suppose $[v_0,v_1], [v_1,v_2], [v_2,v_3]$ are those three consecutive edges. Suppose $x\in [v_0,v_1]$ and $y\in [v_2,v_3]$ such that $f(x)=f(y)$. Since $f([x,v_1]), f([v_1,v_2])$ and $f([v_2,y])$ form a triangle in the tree $T$,
\[
f([x,v_1])\cap f([v_1,v_2])\cap f([v_2,y])\neq \emptyset.
\]
In particular, there exists a fiber that intersects $[v_0,v_1], [v_1,v_2]$, and $[v_2,v_3]$.
This completes the proof.
\end{proof}

\begin{proof}[Proof of Theorem~\ref{introthm:betti}]
We treat first the case when $\pi_1(X)$ is finite (done already in \cite[Theorem 3.1] {Ka88}, but we give a different proof).

Let $\phi:\widetilde{X}\to T$ be a continuous proper map to a tree $T$
such that the preimage of any point has diameter at most 1. We note that
we may take $T$ to be a tree rather than a graph as $\pi_1(\widetilde{X})=1$.

It suffices to show that every connected component of every sphere around $x$ has diameter at most $3$.  We take two points $a$ and $b$ from such a connected component $C$ of $S$.
Let $\gamma$ be a path between $a$ and $b$ in a $\delta$-neighborhood of $C$.
We use the $\delta$-neighborhood rather than $C$ itself because $C$ may not be path connected.
Let $s_1,s_2$ be geodesic paths parametrized by arc length joining $x$ to $a,b$ respectively.

We consider the loop $\alpha =s _1\cdot \gamma\cdot s _{2}^{-1}$. Since $\pi _1(X)$ is finite, for some $n$, $\alpha ^n$ is homotopically trivial so it lifts to a loop $\widetilde{\alpha ^n}$ in $\widetilde{X}$. 
We note that $\widetilde{\alpha ^n}$ is formed
by concatenating liftings of $s_1,\gamma, s_{2}^{-1}$
in this order. 

By Lemma~\ref{l:loops to tree}, there exists a  fiber of the restriction map $\phi:\widetilde{\alpha^n}\rightarrow T$ that intersects lifts of $s_1,\gamma,$ and $s_2^{-1}$.
Since fibers of $\phi$ have diameter at most 1, we have that there exists $x_1\in s_1, x_2\in s_2$ and $x_3\in \gamma$,
such that $d(\widetilde{x_i}, \widetilde{x_j}) \leq 1$ and by projecting the corresponding geodesic, this implies $d(x_i, x_j) \leq 1$.
By Lemma~\ref{l:thin triangle to diam}, we obtain that $d(a,b)\leq 3+4\delta$.
Since $a,b$ is taken arbitrarily from $C$ and $\delta$ can be taken arbitrarily close $0$, it follows that $\diam(S)\leq 3$.
\newline

 We now give a similar argument in the case when $\pi_1(X)$ is virtually cyclic. In fact, the same reasoning applies when $\pi_1(X)$ is virtually abelian. 
However, as mentioned at the beginning of this section, the assumptions 
$\UW_1(\widetilde{X}) < \infty$ and $\pi_1(X)$ virtually abelian together imply 
that $\pi_1(X)$ must already be virtually cyclic.

It suffices to show that every connected component of every sphere around $x$ has diameter at most $6$.  Suppose, on the contrary, that there exists a connected component $C$ with diameter $> 6$.
Specifically, there exists $a,c\in C$, such that $d(a,c)> 6$ with a path $\gamma$ between them that stays in some $\delta$-neighborhood of $C$. 
There is a point $b$ in $\gamma $ that is at distance $> 3$ from
both $a,c$. Let $\gamma =\gamma _1\cup \gamma _2$ where $a,b$, and $b,c$
are the endpoints of $\gamma _1$ and  $\gamma _2$ respectively.
Let $s_1,s_2,s_3$ be geodesic paths parametrized by arc length joining $x$ to $a,b,c$ respectively.

We consider the loops $\alpha =s_1\cdot \gamma _1\cdot s_2^{-1}$,
$\beta =s_2\cdot \gamma _2\cdot s_3^{-1}$.

Let $G$ be the finite-index cyclic subgroup of $\pi_{1}(X)$.  
Then there exist $m \in \mathbb{N}$ such that $\alpha^{m},\beta^{m} \in G$.  
Since $G$ is abelian, the commutator loop $\eta := \alpha^{m}\beta^{m}\alpha^{-m}\beta^{-m}$
represents the trivial element of $G$. In particular, $\eta$ lifts to a loop in the covering space $\widetilde{X}$.
Observe that, because $m>0$, the subpath $s_{2}^{-1}\cdot s_{2}$  appears once in $\eta$ and this is the only instance of backtracking in $\eta$. Removing this subpath from $\eta$, we obtain a homotopically trivial loop in $X$ expressed as a concatenation of paths from $V$, such that every three consecutive paths (up to orientation) are of one of the following types, in any order: $\{s_{i}, \gamma, s_{j}\},
\{s_{i}, \gamma_{1}, s_{j}\}, \text{or }
\{s_{i}, \gamma_{2}, s_{j}\}$ where $i \neq j.$
We now apply Lemma~\ref{l:loops to tree} to the restriction of $\phi$ on the lift of this loop in $\widetilde{X}$.
It follows that there exists a fiber that intersects the lifts of all three paths (taken upto orientation) in the set $\{s_i,\gamma,s_j\}$ for some $i\neq j$.

In each case, we project that fiber onto $X$. Since each fiber of $\phi$ has diameter $\leq 1$, its projection in $X$ also has diameter $\leq 1$.
In the first case, this gives us $x_1\in s_1,x_2\in \gamma, x_3\in s_2$ such that $d(x_i,x_j)\leq 1$ for all $i,j$.
By Lemma~\ref{l:thin triangle to diam}, it follows that $d(a,b)\leq 3+4\delta$.
Similarly, the second and the third case give us $d(b,c)\leq 3+4\delta$ and $d(a,c)\leq 3+4\delta$, respectively. 
Since $\delta$ can be chosen arbitrarily close to $0$, each case gives us a contradiction. 
\end{proof}

\begin{remark}
Our initial strategy for addressing Question~\ref{main} was to consider the following weaker formulation.
\begin{question}\label{pmain}
Does there exist a function $f:[0,\infty)\to [0,\infty)$ such that for any compact Riemannian  polyhedron $X$, we have 
\[\UW_1(X)\leq f(\dim H_1(X;\mathbb Q))\cdot\UW_1(\widetilde{X})?
\]
\end{question}
The proof of Theorem~\ref{introthm:betti} exploits the fact that many loops in $X$ are homotopically trivial when $\pi_1(X)$ is virtually cylic and hence lift to loops in $\widetilde{X}$.
However, our method fails to apply when homotopically trivial loops are much harder to find in $\pi_1(X)$. 
For instance, the answer to Question~\ref{pmain} remains unknown when the fundamental group is the free group on two generators.
\begin{question}
    Does there exist a constant $c>0$ such that for any compact Riemannian  polyhedron $X$ where $\pi_1(X)$ is the free group on $2$-generators, we have 
$\UW_1(X)\leq c\cdot\UW_1(\widetilde{X})?$
\end{question}

\end{remark}

\section{Surfaces}\label{s3}

The purpose of this section is to prove Theorem~\ref{thm:surface}.  Note that even though the theorem statement is for a surface with a smooth Riemannian metric, it also implies the analogous statement for a piecewise linear surface with a piecewise Euclidean metric, because such a piecewise linear surface $\Sigma$ can be approximated by a smooth surface in a way that changes distances by an arbitrarily small amount, which causes both $\UW_1(\Sigma)$ and $\UW_1(\widetilde{\Sigma})$ to change by an arbitrarily small amount.  In the proof of Theorem~\ref{thm:surface} we sometimes cut and paste in a way that does not preserve smoothness; however, after any such operation we can locally replace the metric by a smooth approximation.  For simplicity of reading, we do not mention these smooth approximations in the proof of Theorem~\ref{thm:surface}.

The proof of Theorem~\ref{thm:surface} is a bit long, and it may seem unnecessarily complicated if our main goal is to prove the conclusion up to a constant factor.  Thus, we state the following weaker version, and sketch a completely different proof method that may be more intuitive than the proof method we use to get the sharp constant in Theorem~\ref{thm:surface}.  After this proof sketch, the remainder of this section contains the proof of Theorem~\ref{thm:surface}.

\begin{theorem}[Weaker version of Theorem~\ref{thm:surface}]\label{thm:weak-surface}
Let $\Sigma$ be a compact orientable surface with boundary, with a Riemannian metric.  Then we have $\UW_1(\Sigma) \leq 5 \cdot \UW_1(\widetilde{\Sigma})$.
\end{theorem}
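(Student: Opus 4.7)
I would follow the Reeb-graph strategy from the proof of Theorem~\ref{introthm:betti}. Fix $x_0 \in \Sigma$, and consider the quotient map $f : \Sigma \to Y$ that collapses each connected component of each sphere $S_r(x_0)$ to a point; $Y$ is a $1$-complex, so it suffices to show that every such connected component $C$ satisfies $\diam(C) \leq 7w$, where $w := \UW_1(\widetilde{\Sigma})$. Fix a map $\phi : \widetilde{\Sigma} \to T$ to a tree with fibers of diameter at most $w$.

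Suppose for contradiction that some component $C \subset S_r$ contains points $a, c$ with $d(a,c) > 7w$, joined by a path $\gamma \subset N_\delta(C)$ for small $\delta$. Let $s_a, s_c$ be minimizing geodesics from $x_0$ to $a, c$ and form the loop $\alpha = s_a \cdot \gamma \cdot s_c^{-1}$. The essential new difficulty is that $\pi_1(\Sigma)$ is free (since $\Sigma$ has boundary), so no power of $\alpha$ is null-homotopic; in particular $\alpha^n$ does not lift to a loop in $\widetilde{\Sigma}$ for any $n$, and the power trick from the proof of Theorem~\ref{introthm:betti} is unavailable. My workaround is to fix a lift $\widetilde{x_0}$, lift $\alpha$ as a path from $\widetilde{x_0}$ to some translate $\widetilde{x_0}' = g \cdot \widetilde{x_0}$, and close this path into a loop by adjoining a minimizing geodesic $\widetilde{\tau}$ in $\widetilde{\Sigma}$ from $\widetilde{x_0}'$ back to $\widetilde{x_0}$, producing a $4$-gon $\widetilde{P} = \widetilde{s_a} \cdot \widetilde{\gamma} \cdot \widetilde{s_c^{-1}} \cdot \widetilde{\tau}$.

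Applying Lemma~\ref{l:loops to tree} to $\widetilde{P}$ and $\phi$ yields a fiber $F$ of $\phi$ meeting three consecutive edges of $\widetilde{P}$. If these three edges are $\widetilde{s_a}, \widetilde{\gamma}, \widetilde{s_c^{-1}}$, then projecting to $\Sigma$ and applying Lemma~\ref{l:thin triangle to diam} gives $d(a,c) \leq 3w + 4\delta$, contradicting $d(a,c) > 7w$ for $\delta$ small. In each of the remaining three cases, $F$ meets the closing edge $\widetilde{\tau}$; I would handle these by combining the information from $F$ with a second application of Lemma~\ref{l:thin triangle to diam}, using the projected geodesic loop $\pi(\widetilde{\tau}) \subset \Sigma$ to transfer the constraint arising from $F \cap \widetilde{\tau}$ back to one of the radial geodesics $s_a$ or $s_c$. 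Chaining the two applications should give $d(a,c) \leq 7w + O(\delta)$, again contradicting the assumption.

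The main obstacle will be exactly this case analysis for the three configurations in which $F$ meets $\widetilde{\tau}$: the length of $\widetilde{\tau}$ is not controlled \emph{a priori}, so the argument must exploit both the $2$-dimensional structure of $\Sigma$ and the way $\pi(\widetilde{\tau})$ is positioned relative to the sphere $S_r$ in order to turn a fiber intersection with $\widetilde{\tau}$ into a useful bound on $d(a,c)$. The accumulation of the two applications of Lemma~\ref{l:thin triangle to diam} in these bad cases is what produces the non-sharp constant $7$, whereas the sharp constant $1$ in Theorem~\ref{thm:surface} should require the more delicate cut-and-paste technique developed in the rest of this section.
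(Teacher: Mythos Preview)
Your proposal has a genuine gap that you yourself flag but do not resolve. In the three ``bad'' configurations where the fiber $F$ meets the closing geodesic $\widetilde{\tau}$, you assert that a second application of Lemma~\ref{l:thin triangle to diam} will finish the job, but there is no mechanism for this. Concretely, take the case where $F$ meets $\widetilde{s_c^{-1}}$, $\widetilde{\tau}$, and $\widetilde{s_a}$. Projecting, you obtain points $p_a \in s_a$, $p_c \in s_c$, and $p_\tau \in \pi(\widetilde{\tau})$ pairwise within $w$. The estimate in Lemma~\ref{l:thin triangle to diam} requires the third point to lie in $N_\delta(C)$, so that $d(x_0, p_\tau)$ is forced to be close to $r$; without that, $p_a$ and $p_c$ could sit anywhere along their radial geodesics, and $d(a,c) \leq (r - d(x_0,p_a)) + w + (r - d(x_0,p_c))$ is useless. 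The loop $\pi(\widetilde{\tau})$ is an arbitrary closed geodesic through $x_0$ of uncontrolled length; nothing ties it to $S_r$. Your appeal to ``the $2$-dimensional structure of $\Sigma$'' is not an argument---the Reeb-graph-from-a-point construction is insensitive to dimension, and the paper's own Remark after Theorem~\ref{introthm:betti} explicitly says that this method breaks down once $\pi_1$ is a free group on two or more generators.

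The paper's proof sketch takes an entirely different route that genuinely uses both the surface hypothesis and the presence of boundary. Instead of spheres around a point, it uses distance to $\del\Sigma$: the key observation is that for $D > \UW_1(\widetilde\Sigma)$ the set $\widetilde E_{2D}$ of points at distance $\geq 2D$ from $\del\widetilde\Sigma$ has bounded components (since a small fiber of $\phi$ separates it from $\del\widetilde\Sigma$), hence each component of $E_{2D}\subset\Sigma$ lifts homeomorphically and inherits a map to a tree with $D$-small fibers. On the complement one retracts to the cut locus of $\del\Sigma$, with fibers of length $\leq 3D$. Gluing the two regimes along $\del A$ (where $A \supset E_{3D}$ is a slight thickening) yields fibers of diameter at most $3D + D + 3D = 7D$. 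The point is that the boundary, not a basepoint, supplies the reference set whose distance function has controllable level sets; this is where the surface-with-boundary hypothesis enters, and it is absent from your proposal.
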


\begin{proof}[Proof sketch.]
Let $D > \UW_1(\widetilde{\Sigma})$ be arbitrary.  First we consider the easier case where every point in $\Sigma$ is within distance $D$ of $\del \Sigma$.  We define a deformation retraction of $\Sigma$ into a subset $\Gamma$ as follows.  Each point of $\del \Sigma$ starts moving at unit speed in the direction perpendicular to $\del \Sigma$.  When it first hits another such point, both points stop; informally, $\Gamma$ is the set of such stopping locations.  More precisely, $\Gamma$ is the closure of the set of points in $\Sigma$ that have more than one length-minimizing path to $\del \Sigma$; we can call $\Gamma$ the cut locus.  The deformation retraction is defined on all of $\Sigma$, not just on $\del \Sigma$: to find the image of an arbitrary point of $\Sigma$, not in $\Gamma$ or in $\del \Sigma$, we find the (unique) closest point in $\del \Sigma$ and follow the trajectory of that point to where it hits $\Gamma$.  If $\Gamma$ is $1$-dimensional, then we have successfully shown $\UW_1(\Sigma) \leq 2D$, because the set of trajectories arriving at each point of $\Gamma$ has diameter at most $2D$.  

The cut locus is not always a locally finite $1$-complex, so we may have to perturb slightly to get into the generic case where it is.  Specifically, there is a smooth map from $\del \Sigma \times [0, \infty)$ into $\Sigma$, given by sending $(p, t)$ to the point at distance $t$ along the geodesic from point $p$ perpendicular to $\del \Sigma$.  This map is not defined for all time, because the geodesic might run off the edge of the surface.  But it is smooth where it is defined.  The multi-jet transversality theorem (see~\cite[Theorem II.4.13]{GG73}) implies that if we define the cut locus in terms of a small $C^{\infty}$ perturbation of this map, we may assume that it consists of a finite set of edges, coming together at finitely many vertices.  (We do not include the full transversality details in this proof sketch.) 
Defining the deformation retraction in terms of this perturbed map, we obtain our desired conclusion in the special case where every point in $\Sigma$ is within distance $D$ of $\del \Sigma$.

In the general case, the rough picture, sketched in Figure~\ref{fig:weak-surface}, is that the set of points at greater than distance $D$ from $\del \Sigma$ forms a disjoint union of disks.  Because each disk lifts to $\widetilde{\Sigma}$ and $D > \UW_1(\widetilde{\Sigma})$, we have maps from these disks to various $1$-dimensional complexes with fibers of diameter at most $D$.  Away from these disks, we can use the deformation retraction strategy given above.  Where the two strategies meet, along the boundaries of the disks, the fibers from each strategy combine, so their diameters might get a little larger but not too much.

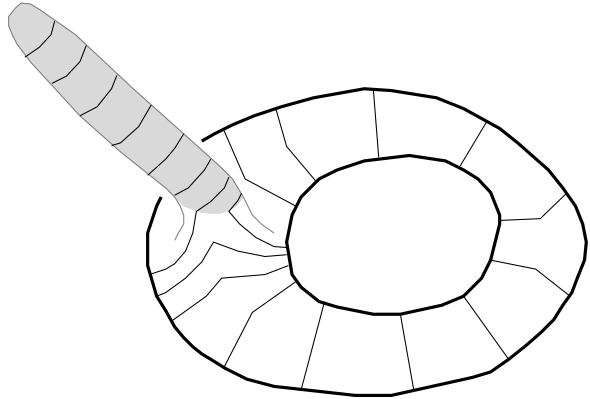
\begin{figure}
\begin{center}
\begin{tikzpicture}[scale=.12]
\draw[very thick] (-2, 10.2)--(-.6, 11)--(1.4, 12)--(3.8, 13)--(6.8, 14)--(10.4, 15)--(16, 16)--(19, 15.8)--(24, 15)--(27, 13.8)--(31, 11.6)--(33, 10)--(36.4, 7)--(38, 5)--(39.4, 3)--(40.2, 1)--(40.6, -1)--(40.4, -3)--(39.6, -5)--(39, -6.6)--(38, -8)--(37, -9.6)--(35.6, -11)--(34, -12.4)--(32, -14)--(30, -15.4)--(28, -16)--(19, -18)--(15, -18)--(6, -17)--(3, -16.2)--(.6, -15)--(-1, -14)--(-2, -13.4)--(-3, -12.6)--(-4, -11.6)--(-5, -10.4)--(-6, -8.6)--(-7, -7)--(-8, -3.6)--(-8, 0)--(-7, 3)--(-6.5, 4);
\draw[very thick] (8, 2)--(9, 4)--(11, 6)--(13, 7)--(16, 8)--(21, 8.6)--(25, 8)--(27, 7)--(28.6, 6)--(30, 4.5)--(31, 2)--(31, 1)--(30, -3)--(29, -5)--(27, -7)--(24.6, -8)--(20, -9)--(17, -9)--(13, -8.2)--(11, -7.6)--(9, -6)--(8, -4.6)--(7.4, -1)--cycle;

\draw[gray!30, fill=gray!30] (-5, 4)--(-6, 5)--(-7, 5.8)--(-9, 7.4)--(-11, 9)--(-15.5, 13)--(-21, 19)--(-22.5, 21)--(-23, 22)--(-23.4, 23)--(-23.4, 24)--(-22.6, 25)--(-22, 25.5)--(-21, 25.4)--(-20, 24.8)--(-16, 22)--(-9.6, 16)--(-4, 11)--(0, 7.4)--(1.4, 6)--(2, 5)--(2.6, 4)--(1.8, 3)--(1, 2.5)--(0, 2.2)--(-1, 2.2)--(-2, 2.3)--(-4, 3)--cycle;
\draw[gray] (-5, -.8)--(-4.6, 0)--(-4, 1)--(-4, 2)--(-4.4, 3)--(-5, 4)--(-6, 5)--(-7, 5.8)--(-9, 7.4)--(-11, 9)--(-15.5, 13)--(-21, 19)--(-22.5, 21)--(-23, 22)--(-23.4, 23)--(-23.4, 24)--(-22.6, 25)--(-22, 25.5)--(-21, 25.4)--(-20, 24.8)--(-16, 22)--(-9.6, 16)--(-4, 11)--(0, 7.4)--(1.4, 6)--(2, 5)--(2.6, 4)--(3.6, 2)--(4.6, 1)--(6, 0);

\draw (-7.8, -4.6)--(-6, -4)--(-5, -3.4)--(-3.8, -2)--(-3, 0)--(-2.6, 2.4)--(-1, 3.5)--(.5, 5)--(1, 6.2);
\draw (2.4, 4.4)--(2, 3.6)--(1, 2.4)--(2.2, 1)--(4, -.5)--(6, -1.5)--(7.4, -1.6);
\draw (-7, -7)--(-6, -6.6)--(-3.8, -5)--(-2, -3.2)--(-.7, -1)--(2, -2)--(5, -2.6)--(7.4, -2.4);
\draw (-5.4, -9.8)--(-1.5, -7)--(.2, -5)--(5, -4.6)--(7.6, -3.6);
\draw (.4, -15)--(3.6, -8.8)--(8.4, -5.4);
\draw (9, -17.4)--(11.6, -7.6);
\draw (21.5, -17.5)--(20, -9);
\draw (32, -14)--(27, -7);
\draw (38.8, -7)--(35, -4)--(30, -3);
\draw (38.4, 4.4)--(35.5, 1.6)--(31, 1.4);
\draw (26.6, 7.4)--(29.6, 12.4);
\draw (17.6, 8.4)--(17, 16);
\draw (10.6, 5.8)--(7.4, 9.6)--(6.2, 13.8);
\draw (8.4, 3)--(2.8, 6)--(.4, 11.5);

\draw (-5, 4.2)--(-3.5, 5)--(-1.7, 7)--(-1, 8.2);
\draw (-8, 6.4)--(-6, 8.2)--(-5, 9.4)--(-4, 11);
\draw (-12, 9.7)--(-11, 10)--(-9, 11.8)--(-7.6, 14.2);
\draw (-15.5, 13)--(-13.6, 14)--(-12, 16)--(-11.4, 17.5);
\draw (-18.6, 16.6)--(-17, 17.4)--(-15.5, 19)--(-14.8, 20.8);
\draw (-21.6, 19.5)--(-20, 20.6)--(-18.7, 22)--(-18.3, 23.5);
\end{tikzpicture}
\end{center}
\caption{Each fiber of the map in Theorem~\ref{thm:weak-surface} is either far from the boundary and roughly parallel to it (gray region), or near the boundary and orthogonal to it, or a union of one piece of the first type with some pieces of the second type.}\label{fig:weak-surface}
\end{figure}

To be more precise, for each $r \geq 0$, let $E_r$ be the set of points in $\Sigma$ at distance at least $r$ from $\del \Sigma$, and let $\widetilde{E}_r$ be the preimage of $E_r$ in $\widetilde{\Sigma}$.  Let $f \co \widetilde{\Sigma} \rightarrow Y$ be a map to a $1$-dimensional simplicial complex $Y$, with fibers of diameter at most $D$, and without loss of generality we may assume that $Y$ is simply connected, that is, a tree.

Under the map $f$, the image of $\widetilde{E}_{2D}$ is disjoint from the image of $\del \widetilde{\Sigma}$.  Thus, there is a disjoint union of fibers, which each have diameter at most $D$, that separate these two sets from each other.  This implies that every component of $\widetilde{E}_{2D}$ is bounded, because there is no way to separate an unbounded subset of $\widetilde{\Sigma}$ from $\del \widetilde{\Sigma}$ using a bounded set: because $\widetilde{\Sigma}$ is uniformly contractible, any bounded subset of its interior is contained in a topological disk, and the complement of that disk is connected. 

Let $A \subseteq \widetilde{E}_{2D}$ be a set projecting homeomorphically to $E_{2D}$; that is, $A$ consists of one lift per connected component of $E_{2D}$.  Let $Y_A$ be the image of $A$ under $f$, and let $f_A \co E_{2D} \rightarrow Y_A$ denote the composition of the homeomorphism and $f$.  This will be our final map, except that the fibers of $\del E_{2D}$ will be combined with some additional points outside $E_{2D}$.

As in the case where all of $\Sigma$ is within $D$ of $\del \Sigma$, consider the deformation retraction to the cut locus $\Gamma$, after a perturbation that ensures that $\Gamma$ is a $1$-complex. We can assume that for a generic metric $\del E_{2D} \cup \Gamma$ is a locally finite $1$-complex.
Now, in the complement of $E_{2D}$ if we push each point away from $\del \Sigma$ until it meets either $\Gamma$ or $\del E_{2D}$, we obtain a deformation retraction from $\Sigma$ to $E_{2D} \cup \Gamma$, with the fiber at each point interior to $E_{2D}$ being just that point, and the fiber at each point of $\del E_{2D} \cup (\Gamma\setminus E_{2D})$ being the union of one or more geodesics through that point, each of length at most $2D$.

Roughly, we want to compose this deformation retraction with $f_A$.   
More precisely, we should form a $1$-dimensional complex $Y'_A$ by taking each connected component of $\Gamma \setminus E_{2D}$, and attaching each boundary point in $\Gamma \cap \del E_{2D}$ to its image in $Y_A$.  Then there is a map $f'_A \co \Sigma \rightarrow Y'_A$, given by first deformation retracting to $E_{2D} \cup \Gamma$, and then applying $f_A$ to the points of $E_{2D}$ while applying the obvious identification map on the points of $\Gamma \setminus E_{2D}$.  

We claim that the fibers of the resulting map $f'_A$ have diameter at most $2D + D + 2D = 5D$.  In each fiber over a point of $Y_A$, any two points are each within $2D$ of their destinations under the deformation retraction, and those points in $E_{2D}$ are within distance $D$ of each other, because the corresponding points in $A$ are within distance $D$ of each other and the projection map does not increase distance.  
And, in each fiber over a point of $\Gamma \setminus E_{2D}$, any two points have the same destination under the deformation retraction, so they are within $2D + 2D = 4D$ of each other.  Thus we may conclude $\UW_1(\Sigma) \leq 5\cdot \UW_1(\widetilde{\Sigma})$, as desired.
\end{proof}

In the proof of Theorem~\ref{thm:surface}, it is inconvenient to estimate $\UW_1(\Sigma)$ in terms of its standard definition, because we end up repeatedly modifying the $1$-dimensional target space to which $\Sigma$ maps.  Thus, we use the perspective of separating sets, introduced in~\cite{Papasoglu20}.  Let $X$ be a $2$-dimensional Riemannian polyhedron.  We say that a $1$-dimensional subpolyhedron $Z$ is a \textbf{\textit{$D$-separator}} of $X$ if every path component of $Z$ and every path component of $X \setminus Z$ has diameter at most $D$.  
\begin{lemma}\label{lem:width and separator}
    Uryson 1-width of $X$ is the infimal $D$ such that $X$ admits a $D$-separator.
\end{lemma}
\begin{proof}  
    If $X$ admits a $D$-separator, then we can map each path component of $Z$ to a point, and for each path component of $X \setminus Z$, we can map it to the cone over the points corresponding to the components of $Z$ in its boundary.  In the reverse direction, if $X$ admits a map to a $1$-dimensional space $Y$ with fibers of diameter at most $D$, then for any $\varepsilon > 0$, by finely subdividing $Y$ we may assume that the preimage of each vertex or edge of $Y$ has diameter at most $D+\varepsilon$, and then take $Z_0$ to be the preimage of the vertices of $Y$.
If some components of $Z_0$ are not $1$-dimensional, replace each such component by the boundary of a small regular neighborhood in $X$.
The resulting set $Z$ is a $(D+2\varepsilon)$-separator, and letting $\varepsilon \to 0$ proves the claim.
\end{proof}

According to Lemma~\ref{lem:width and separator}, we may express the proof of Theorem~\ref{thm:surface} entirely in terms of $D$-separators, and never refer to a $1$-dimensional target space $Y$.

We divide the proof of Theorem~\ref{thm:surface} into two cases. The first case concerns surfaces with boundary, and the second addresses all the remaining cases. 
We say a path in a surface $\Sigma$ with endpoints in $\partial \Sigma$ is a \emph{\textbf{nontrivial cutting}} if it is impossible to homotope into $\del \Sigma$ with endpoints fixed.

The proof for surfaces with boundary relies on repeated applications of the following proposition.

\begin{proposition}[Main proposition for surfaces with boundary]\label{lem:surface}
Let $\Sigma$ be a compact surface with boundary, with a Riemannian metric.  
Let $\gamma$ be a path in $\Sigma$ which is length-minimizing among all the nontrivial cutting paths.
Let $L$ be the length of $\gamma$, and let $\widetilde{\gamma}$ be a lift of $\gamma$ to $\widetilde{\Sigma}$.  For any $M > 0$, let $\widetilde{\Sigma}'$ be the result of cutting apart $\widetilde{\Sigma}$ along $\widetilde{\gamma}$ and gluing in a Euclidean strip $[-M, M] \times [0, L]$, such that its ends $\{-M\} \times [0, L]$ and $\{M\} \times [0, L]$ attach isometrically to the two cut copies of $\widetilde{\gamma}$.  
Suppose that $Z$ is a $D$-separator in $\widetilde{\Sigma}$ for some $D > 0$, containing only finitely many points of $\widetilde{\gamma}$.  

Then for all $\varepsilon$ with $0 < \varepsilon < M$ there exists a $D+\varepsilon$-separator $Z'$ of $\widetilde{\Sigma}'$, with the following properties:
\begin{enumerate}
    \item $Z$ and $Z'$ agree on the complement of the added strip; and
    \item The segments $\{-M+\varepsilon\} \times [0, L]$ and $\{M-\varepsilon\} \times [0, L]$ are in $Z'$, and within $(-M+\varepsilon, M-\varepsilon)\times [0, L]$, the separator $Z'$ consists only of segments $\{x\} \times [0, L]$ for various $x$.
\end{enumerate}
\end{proposition}

First we prove Theorem~\ref{thm:surface} for the surface with boundary case assuming Proposition~\ref{lem:surface}.  Then we prove Lemmas~\ref{lem:fill-in}, \ref{lem:fill-out}, and~\ref{lem:draw-z} to help prove Proposition~\ref{lem:surface}.  To replace $Z$ by $Z'$ in Proposition~\ref{lem:surface}, the construction essentially consists of cutting each component of $Z$ or its complement into a left half and a right half, and adding to each half a new piece that follows the vertical edge of the strip.  However, it needs to be done carefully in order to ensure that the additions do not combine any components of $Z$ or its complement, and (stated informally) to ensure that if an addition to the left half includes faraway points, then the right half also contained equally faraway points, and vice versa.  Lemmas~\ref{lem:fill-in}, \ref{lem:fill-out}, and~\ref{lem:draw-z} allow us to make these verifications.

\begin{proof}[Proof of Theorem~\ref{thm:surface} for surface with boundary]
If $\Sigma$ is a disk, then $\widetilde{\Sigma} = \Sigma$ and the theorem is a tautology.  Thus, we may assume that $\pi_1(\Sigma)$ is a free group. 

The rough idea of the proof is as follows. 
Suppose that there are $r$ disjoint length-minimizing geodesics in $\Sigma$, such that cutting $\Sigma$ along these geodesics results in a fundamental domain for $\Sigma$ in $\widetilde{\Sigma}$ homeomorphic to a disk.
We know that $\widetilde{\Sigma}$ admits a $D$-separator $Z$ for some $D$ close to $\UW_1(\widetilde{\Sigma})$.  Suppose that we can apply Proposition~\ref{lem:surface} to all the lifts of all these geodesics at once, and consider the resulting $D+\varepsilon$-separator $Z'$.  We can cut along the geodesics to obtain a fundamental domain, glue corresponding pairs of geodesics, and shrink the strips to recover $\Sigma$.  This process produces a $(D+\varepsilon)$-separator of $\Sigma$, using the restriction of $Z'$ to the fundamental domain, so $\UW_1(\Sigma)$ is no larger than $D+\varepsilon$, which is arbitrarily close to $\UW_1(\widetilde{\Sigma})$.

The precise version of the proof requires choosing these geodesics one at a time, 
so that the successive applications of Proposition~\ref{lem:surface} do not interfere 
with each other. 
Moreover, in this refined version of the argument, cutting along 
these geodesics may not yield a single fundamental domain, but rather a disjoint 
union of disks, which is nevertheless sufficient for the argument to proceed.
Let $M$ be a large number, larger than $\UW_1(\widetilde{\Sigma})+2r\varepsilon$. (This specific threshold is explained later in the proof, but at this stage, we just let $M$ be sufficiently large.)  
Let $\Sigma_0 = \Sigma$, and let $\gamma_1$ be a path in $\Sigma_0$ with endpoints in $\del \Sigma_0$, impossible to homotope into $\del \Sigma_0$ with endpoints fixed, and length-minimizing among such paths.  
Equivalently, every lift $\widetilde{\gamma_1}$ is length-minimizing among paths in $\widetilde{\Sigma_0}$ that connect two distinct connected components of $\del \widetilde{\Sigma_0}$.  Let $L_1$ be the length of $\gamma_1$.

Let $\Sigma_1$ be the result of cutting $\Sigma_0$ along $\gamma_1$ and gluing in a Euclidean strip $[-M, M] \times [0, L_1]$, such that its ends $\{-M\} \times [0, L_1]$ and $\{M\} \times [0, L_1]$ attach isometrically to the two cut copies of $\gamma_1$.  Abusing notation, in $\Sigma_1$ we let $\gamma_1$ denote the middle segment $\{0\} \times [0, L_1]$.

To find $\gamma_2$ in $\Sigma_1$, we let $\gamma_2$ be a length-minimizing nontrivial cutting path in the surface resulting from cutting $\Sigma_1$ along $\gamma_1$. 
We note that the endpoints of $\gamma_2$ are not along the two cut copies of $\gamma_1$; this is because if an endpoint of $\gamma_2$ were in $\gamma_1$, then the length of $\gamma_2$ would have to be greater than $M$.    
Moreover, the endpoints of $\gamma_2$ are not along the added strip in $\Sigma_1$, because otherwise $\gamma_2$ would be orthogonal to boundary. Since the added strip is a flat rectangle and $\gamma_2$ is disjoint from the side $\gamma_1$ of the rectangle, this  means that $\gamma_2$ would be parallel to $\gamma_1$, which is excluded.

Gluing the two copies of $\gamma_1$ together again, we have $\Sigma_1$ with both $\gamma_1$ and $\gamma_2$ inside as disjoint geodesics.  Let $L_2$ be the length of $\gamma_2$, let $\Sigma_2$ be the result of cutting $\Sigma_1$ along $\gamma_2$ and gluing in $[-M, M] \times [0, L_2]$, and let $\gamma_2$ also denote the middle segment $\{0\} \times [0, L_2]$ in $\Sigma_2$.

We repeat this process to get $\gamma_3, \ldots, \gamma_r$ and $\Sigma_3, \ldots, \Sigma_r$.  Each time, we select $\gamma_i$ to be a length-minimizing nontrivial cutting path in the surface $\Sigma'_{i-1}$ that results from cutting $\Sigma_{i-1}$ along $\gamma_1, \ldots, \gamma_{i-1}$, which are disjoint segments each in the center of a strip.  The resulting $\gamma_i$ is disjoint from $\gamma_1, \ldots, \gamma_{i-1}$.  
Let $L_i$ be the length of $\gamma_i$, let $\Sigma_i$ be the result of cutting $\Sigma_{i-1}$ along $\gamma_i$ and gluing in $[-M, M] \times [0, L_i]$, and let $\gamma_i$ also denote the middle segment $\{0\} \times [0, L_i]$ in $\Sigma_i$.
Since each $\gamma_i$ is a nontrivial cutting path contained in some connected component of $\Sigma'_{i-1}$, cutting that component along $\gamma_i$ either produces a single connected component with a smaller first Betti number, or it produces two connected components, with nonzero first Betti numbers summing to the first Betti number of $\Sigma'_{i-1}$.  In either case, after cutting, each resulting connected component has a smaller first Betti number.
Hence, this process terminates after finitely many iterations because $\Sigma$ is compact.
In other words, for some $r<\infty$, $\Sigma'_r$ is a disjoint union of disks.
In $\Sigma_r$, each geodesic $\gamma_1, \ldots, \gamma_r$ is surrounded by a Euclidean strip of length $2M$ and these strips are pairwise disjoint.

At the end of the proof, we will use the fact that for any $\varepsilon > 0$, each $\Sigma_i$ admits a  homeomorphism to $\Sigma_{i-1}$ that increases distances by at most $\varepsilon$.
This is obtained by mapping the long strip around $\gamma_i$ in $\Sigma_i$ to a small tubular neighborhood around $\gamma_i$ in $\Sigma_{i-1}$.  The main effect of this process is to decrease lengths in the strip direction, but it may slightly increase some lengths in the direction parallel to $\gamma_i$.

Composing the maps, we obtain a homeomorphism from $\Sigma_r$ to $\Sigma_0 = \Sigma$ that increases the distances by at most $r\varepsilon$ amount.

We let $\Sigma'_0:=\Sigma$. We apply Proposition~\ref{lem:surface} to construct separators on $\widetilde{\Sigma'_1}, \ldots, \widetilde{\Sigma'_r}$, with an eye toward being able to modify the separator on $\widetilde{\Sigma'_r}$ to get a separator of $\Sigma_r$. 
Note that for any $i\geq 1$, $\Sigma_i'$ may have more than one component in which case $\widetilde{\Sigma'_i}$ would mean union of universal covers of each component of $\Sigma'_i$.
Specifically, we will construct $Z_0, \ldots, Z_r$, such that each $Z_i$ is a $\UW_1(\widetilde{\Sigma})+(2i+2)\varepsilon$-separator of $\widetilde{\Sigma'_i}$.  We know that $Z_0$ exists, because $\UW_1(\widetilde{\Sigma})+2\varepsilon > \UW_1(\widetilde{\Sigma'_0})$.

Suppose we have already constructed the separators 
$Z_0, Z_1, \ldots, Z_{i-1}$
on the corresponding surfaces 
$\widetilde{\Sigma'_0}, \widetilde{\Sigma'_1}, \ldots, \widetilde{\Sigma'_{i-1}}$.
We now proceed to construct $Z_i$ on $\widetilde{\Sigma'_i}$.

Assume that the curve \( \gamma_i \) lies in a connected component \( K \) of \( \Sigma'_{i-1} \).
Let \( K' \) denote the collection of components of \( \Sigma'_i \) that arise from \( K \).
All other components of \( \Sigma'_i \) are the same as those of \( \Sigma'_{i-1} \), and on their universal covers we simply keep the same separator \( Z_{i-1} \).
Hence, it remains to define \( Z_i \) on \( \widetilde{K'} \), the union of the universal covers of the components in \( K' \).

We already have the separator \( Z_{i-1} \) on \( \widetilde{K} \).
Since \( \gamma_i \) is length-minimizing among all nontrivial cutting paths in \( K \), we apply Proposition~\ref{lem:surface} to \( Z_{i-1} \) for every lift of \( \gamma_i \) to \( \widetilde{K} \).
Although there are infinitely many such lifts, the corresponding replacements can be performed in any order, since they are pairwise disjoint and do not interact with each other.
By construction, the resulting separator restricts to a separator on \( \widetilde{K'} \), and we define \( Z_i \) to be this separator on \( \widetilde{K'} \).

To estimate the diameters of the components of \( Z_i \) and its complement, we need to account for the distances between points lying in different added strips.
While performing a single replacement increases the diameters by at most \( \varepsilon \), performing multiple replacements increases them by at most \( 2\varepsilon \).
This is because any geodesic between points lying in two different added strips can be approximated by a geodesic in the original surface, together with two short paths—one in each added strip—each of length at most $\varepsilon$.
Therefore, assuming that $Z_{i-1}$ was a $(\UW_1(\widetilde{\Sigma}) + 2i\varepsilon)$-separator of $ \widetilde{\Sigma'_{i-1}} $, we conclude that $ Z_i $ is a $(\UW_1(\widetilde{\Sigma}) + (2i + 2)\varepsilon)$-separator of $ \widetilde{\Sigma'_i} $.

Note that one byproduct of Proposition~\ref{lem:surface} is the proof that each $L_i$ is less than or equal to $\UW_1(\widetilde{\Sigma})+2i\varepsilon$; thus, if our $M$ is larger than the threshold of $\UW_1(\widetilde{\Sigma})+2r\varepsilon$, it is large enough to ensure that the length-minimizing choices of $\gamma_1, \ldots, \gamma_r$ are disjoint.

We can use $Z_r$ to construct a separator $Z'$ of $\Sigma_r$.  Since each connected component of $\Sigma'_r$ is simply connected, $\widetilde{\Sigma'_r}=\Sigma_r'$ and hence $Z_r$ is a separator on $\Sigma_r'$.
Now we glue together the corresponding copies of each $\gamma_i$ in $\Sigma_r'$ to form $\Sigma_r$, and include $\gamma_1, \ldots, \gamma_r$ as part of $Z'$.  The resulting set $Z'$ is a $\UW_1(\widetilde{\Sigma})+(2r+2)\varepsilon$-separator of $\Sigma_r$.

Taking the image of $Z'$ under a  homeomorphism from $\Sigma_r$ to $\Sigma$ that increases distances by at most $r\varepsilon$, we obtain a $\UW_1(\widetilde{\Sigma})+(3r+2)\varepsilon$-separator of $\Sigma$.  Because $\varepsilon$ may be arbitrarily small, we conclude $\UW_1(\Sigma) \leq \UW_1(\widetilde{\Sigma})$, as desired.
\end{proof}

In the lemmas to prove Proposition~\ref{lem:surface}, we imagine $\widetilde{\gamma}$ running upward in $\widetilde{\Sigma}$.  From this perspective, $\widetilde{\Sigma}\setminus \widetilde{\gamma}$ has a left component and a right component, and the boundary components of $\widetilde{\Sigma}$ containing $\widetilde{\gamma}(0)$ and $\widetilde{\gamma}(L)$ are the bottom and top components, respectively, of $\del \widetilde{\Sigma}$.  We focus on the left end of the strip; the right end is analogous.  We need to show that for each component of $Z$ or of $\widetilde{\Sigma} \setminus Z$ that touches $\widetilde{\gamma}$, replacing the portion on the right side by an arc running `parallel' to 
$\widetilde{\gamma}$ and lying in the $\varepsilon$-neighborhood of $\widetilde{\gamma}$, increases diameter by at most $\varepsilon$.  The following two lemmas help us to estimate the distance from a point on the left side to various points along $\widetilde{\gamma}$, which in turn helps us to estimate the distance to various points in $[-M, -M+\varepsilon]\times [0, L]$.

Throughout the paper $B(x,D)$ denotes the open ball of radius $D$ centered at $x$.
\begin{lemma}\label{lem:fill-in}
Let $x$ be a point in the left component of $\widetilde{\Sigma} \setminus \widetilde{\gamma}$, and suppose that the ball $B(x, D)$ contains two points $a_1$ and $a_2$ on $\widetilde{\gamma}$, as well as a path from $a_1$ to $a_2$ in the right component of $\widetilde{\Sigma} \setminus \widetilde{\gamma}$.  Then $B(x, D)$ also contains the interval in $\widetilde{\gamma}$ between $a_1$ and $a_2$.
\end{lemma}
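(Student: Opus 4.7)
The plan is to build a disk-like region inside $B(x,D)$ that is forced to cover $\widetilde\gamma[a_1,a_2]$, by coning $\alpha$ off from $x$ using minimizing geodesics. Parametrize $\alpha\co[0,1]\to\widetilde\Sigma$ with $\alpha(0)=a_1$, $\alpha(1)=a_2$, and $\alpha((0,1))$ in the open right component. After a generic perturbation of $\alpha$ (keeping it inside $B(x,D)$ and in the closed right component) I may assume that for every $s\in[0,1]$ there is a unique minimizing geodesic $\sigma_s\co[0,1]\to\widetilde\Sigma$ from $x$ to $\alpha(s)$, depending continuously on $s$. Define $C\co[0,1]\times[0,1]\to\widetilde\Sigma$ by $C(t,s)=\sigma_s(t)$; after collapsing $\{0\}\times[0,1]$ to the single point $x$, the domain becomes a closed $2$-disk $D^2$. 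Every point of $C(D^2)$ lies on some $\sigma_s$, hence at distance at most $d(x,\alpha(s))<D$ from $x$, so $C(D^2)\subseteq B(x,D)$.

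Next, arrange by a further small perturbation that $C$ is transverse to $\widetilde\gamma$ and that the two boundary geodesics $\sigma_0,\sigma_1$ meet $\widetilde\gamma$ only at their endpoints. Then $C^{-1}(\widetilde\gamma)$ is a compact $1$-submanifold of $D^2$, and its boundary lies in $\del D^2$. The collapsed vertex maps to $x\notin\widetilde\gamma$; the two side edges $[0,1]\times\{0\}$ and $[0,1]\times\{1\}$ map to $\sigma_0$ and $\sigma_1$, which now hit $\widetilde\gamma$ only at their final endpoints $a_1$ and $a_2$; and the remaining edge $\{1\}\times[0,1]$ maps to $\alpha$, which meets $\widetilde\gamma$ only at $a_1,a_2$. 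Hence $\del C^{-1}(\widetilde\gamma)$ consists of exactly the two points $(1,0)$ and $(1,1)$, so $C^{-1}(\widetilde\gamma)$ must contain a single arc $A$ joining them (together, possibly, with some embedded circles). Restricting $C$ to $A$ yields a continuous path in $\widetilde\gamma$ from $a_1$ to $a_2$; since $\widetilde\gamma$ is an embedded arc, the intermediate value theorem applied to the arc-length coordinate on $\widetilde\gamma$ shows the image of this path contains all of $\widetilde\gamma[a_1,a_2]$. Combined with $C(D^2)\subseteq B(x,D)$, this gives $\widetilde\gamma[a_1,a_2]\subseteq B(x,D)$, as required.

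The main technical hurdle is the regularity needed to carry out this cone construction: continuous selection of minimizing geodesics from $x$ can fail along the cut locus of $x$ (and is further complicated by the possible presence of $\del\widetilde\Sigma$, along which minimizing paths may slide), and genuine transversality of $C$ with $\widetilde\gamma$ must be established. Both issues are standard generic-position matters, resolved by small perturbations of $\alpha$ and of the chosen geodesic family, in the spirit of the smooth-approximation conventions already in force throughout this section.
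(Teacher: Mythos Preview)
Your argument never invokes the fact that $\widetilde\gamma$ is a length-minimizing geodesic, and this hypothesis is essential: the statement is false for an arbitrary separating arc. Take $\widetilde\Sigma=\mathbb{R}^2$ and let $\widetilde\gamma$ be the graph $\{(f(y),y):y\in\mathbb{R}\}$ with $f(y)=-100(1-y^2)$ for $|y|\le 1$ and $f(y)=0$ otherwise, so that $\widetilde\gamma$ has a long bump into the left region. With $x=(-0.1,-1)$ (in the left region $\{x<f(y)\}$), $a_1=(0,-1)$, $a_2=(0,1)$, and $\alpha(s)=(0.1\sin(\pi s),\,-1+2s)$ a short arc through the right region, all of $x,a_1,a_2,\alpha$ lie in $B(x,3)$, yet $\widetilde\gamma[a_1,a_2]$ passes through $(-100,0)\notin B(x,3)$.

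In this flat example there is no cut locus, so your cone $C$ is perfectly well-defined and continuous; the failure is in the assertion that $\sigma_0,\sigma_1$ can be arranged to meet $\widetilde\gamma$ only at their endpoints. The straight segment $\sigma_1$ from $x$ to $a_2$ meets $\widetilde\gamma$ a second time (at $t=1/4000$, near $a_1$) and approaches $a_2$ from the \emph{right} side; any path from $x$ to $a_2$ that stays in the closed left region must detour past $x\le -100$, so this extra crossing cannot be removed by a small perturbation inside $B(x,3)$. After your transversality step, $\del C^{-1}(\widetilde\gamma)$ consists of two points both mapping near $a_1$, and your arc $A$ recovers only a tiny piece of $\widetilde\gamma$ there --- not $\widetilde\gamma[a_1,a_2]$. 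So the step ``arrange\ldots that the two boundary geodesics $\sigma_0,\sigma_1$ meet $\widetilde\gamma$ only at their endpoints'' is not a generic-position matter; it is exactly where the minimality of $\widetilde\gamma$ must be used.

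The paper's proof starts from the same geodesic cone but exploits minimality directly rather than via transversality: for a purported missing point $a$ it locates (via a connectedness argument on the family $\{\sigma_s\}$) a point $b\in\pi$ whose minimizing geodesics from $x$ cross $\widetilde\gamma$ at points $c_1,c_2$ on opposite sides of $a$, and then the identity $d(c_1,a)+d(a,c_2)=d(c_1,c_2)$ --- valid precisely because $\widetilde\gamma$ is minimizing --- gives $2d(x,a)\le 2d(x,b)<2D$ in two lines. Your cone-and-transversality route can likely be repaired, but only by first using the geodesic property of $\widetilde\gamma$ to control where the $\sigma_s$ may hit it, which is the same ingredient the paper's shorter argument uses.
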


\begin{proof}
Let $\pi$ be the path from $a_1$ to $a_2$ in the right component of $\widetilde{\Sigma}\setminus \widetilde{\gamma}$.  We draw geodesics back to $x$ from every point of $\pi$.  Suppose to the contrary that there is a point $a\in \widetilde{\gamma}$ between $a_1$ and $a_2$ that is not in $B(x, D)$.  Then there is some point $b$ in $\pi$ such that there are two length minimizing geodesics from $x$ to $b$, one crossing $\widetilde{\gamma}$ above $a$ at a point $c_1$, and the other crossing $\widetilde{\gamma}$ below $a$ at a point $c_2$. We have
\begin{align*}
2D \leq 2\cdot d(x, a)  &\leq d(x, c_1) + d(c_1, a) + d(x, c_2) + d(c_2, a) \\
&= d(x, c_1) + d(c_1, c_2) + d(x, c_2) \\
&\leq d(x, c_1) + d(c_1, b) + d(x, c_2) + d(c_2, b) = 2 \cdot d(x, b) < 2D,
\end{align*}
giving a contradiction.
\end{proof}

\begin{lemma}\label{lem:fill-out}
Let $x$ be a point in the left component of $\widetilde{\Sigma} \setminus \widetilde{\gamma}$, and suppose that the ball $B(x, D)$ contains a point $b$, which is in the right component of $\widetilde{\Sigma} \setminus \widetilde{\gamma}$ and is in $\del \widetilde{\Sigma}$.  Then,
\begin{enumerate}
    \item 
If $b$ is in the bottom component, and $c$ is any 
point along $\widetilde{\gamma}$ such that $B(x,D)$ contains a path between $c$ and $b$ on the right component of $\widetilde{\Sigma}\setminus \widetilde{\gamma}$, then the segment from $c$ to
$\widetilde{\gamma}(0)$ is in $B(x, D)$.
\item If $b$ is in the top component, and $c$ is any 
point along $\widetilde{\gamma}$ such that $B(x,D)$ contains a path between $c$ and $b$ on the right component of $\widetilde{\Sigma}\setminus \widetilde{\gamma}$, then the segment from $c$ to
$\widetilde{\gamma}(L)$ is in $B(x, D)$. And,
\item If $b$ is in a third component of $\del \widetilde{\Sigma}$, then $B(x, D)$ also contains all of $\widetilde{\gamma}$.
\end{enumerate}
\end{lemma}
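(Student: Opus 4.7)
The plan is to handle all three cases with a single construction driven by the length-minimality of $\widetilde{\gamma}$. First I would take a minimizing geodesic $[x,b]\subset\widetilde{\Sigma}$, of length $\leq D$. Because $x$ and $b$ lie on opposite sides of $\widetilde{\gamma}$, this geodesic crosses $\widetilde{\gamma}$; let $c=\widetilde{\gamma}(t_c)$ be the last such crossing along $[x,b]$ measured from $x$. Then $[c,b]$ lies in the closure of the right component of $\widetilde{\Sigma}\setminus\widetilde{\gamma}$, and $d(x,c)+d(c,b)\leq D$.

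The key observation is that concatenating $[b,c]$ with $\widetilde{\gamma}|_{[t_c,L]}$ or with $\widetilde{\gamma}|_{[0,t_c]}$ produces paths from $b$ to $\widetilde{\gamma}(L)$ or $\widetilde{\gamma}(0)$ respectively; whenever such a concatenation joins distinct boundary components of $\widetilde{\Sigma}$, the length-minimality of $\widetilde{\gamma}$ forces it to have length at least $L$ (cross-component paths in $\widetilde{\Sigma}$ project to paths in $\Sigma$ that cannot be homotoped into $\del\Sigma$). In case (1), where $b$ lies on the bottom component, only the concatenation through $\widetilde{\gamma}|_{[t_c,L]}$ connects to a different boundary component, giving $d(c,b)\geq t_c$. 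In case (2), symmetrically $d(c,b)\geq L-t_c$. In case (3), with $b$ on a third boundary component, both inequalities apply, so $d(c,b)\geq\max(t_c,L-t_c)$. In each case this gives the corresponding upper bound on $d(x,c)$ via $d(x,c)\leq D-d(c,b)$.

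The final step is the triangle inequality combined with the arc-length bound $d(c,a)\leq|t-t_c|$ for $a=\widetilde{\gamma}(t)$, giving
\[
d(x,a)\leq d(x,c)+|t-t_c|.
\]
In case (1), restricting to $t\in[0,t_c]$ yields $d(x,a)\leq(D-t_c)+(t_c-t)=D-t\leq D$, so $\widetilde{\gamma}|_{[0,t_c]}\subset B(x,D)$ and the desired path from $b$ to $\widetilde{\gamma}(0)$ is the concatenation $[b,c]\cup\widetilde{\gamma}|_{[0,t_c]}$, perturbed slightly into the right component where it runs along $\widetilde{\gamma}$. Case (2) is symmetric. In case (3), the elementary inequality $|t-t_c|\leq\max(t_c,L-t_c)$ for every $t\in[0,L]$ combined with the bound on $d(x,c)$ immediately yields $d(x,a)\leq D$ for every $a\in\widetilde{\gamma}$, so $\widetilde{\gamma}\subset B(x,D)$, as required.

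I do not anticipate a serious obstacle; the only technicalities are the well-definedness of the last crossing $c$ (immediate from $[x,b]\cap\widetilde{\gamma}$ being closed in a compact parameter interval) and the harmless perturbation of the $\widetilde{\gamma}$-portion of the output path into the interior of the right component, which preserves the distance estimates up to arbitrarily small error.
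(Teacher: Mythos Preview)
Your proposal is correct and follows essentially the same approach as the paper's proof: take the geodesic from $x$ to $b$, let its crossing with $\widetilde{\gamma}$ be the point $c=\widetilde{\gamma}(t_c)$, use the length-minimality of $\widetilde{\gamma}$ among paths joining distinct boundary components to bound $d(c,b)$ from below, and then apply the triangle inequality along $\widetilde{\gamma}$. Your version is slightly more careful in specifying the \emph{last} crossing and in noting the perturbation needed to push the constructed path into the open right component, but the core argument is identical.
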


\begin{proof}
To prove statement~(1), it suffices, by Lemma~\ref{lem:fill-in}, to show that 
\( B(x, D) \) contains a path from \( b \) to \( \widetilde{\gamma}(0) \) lying in the right 
component of \( \widetilde{\Sigma} \setminus \widetilde{\gamma} \). Indeed, once such a path 
exists, the hypothesis guarantees the existence of a path in \( B(x, D) \) connecting 
\( c \) to \( b \) to \( \widetilde{\gamma}(0) \) within the same right component of 
\( \widetilde{\Sigma} \setminus \widetilde{\gamma} \). We can then apply 
Lemma~\ref{lem:fill-in} to obtain the desired conclusion.
To this end, let $a$ be the point where the geodesic $\pi$ from $x$ to $b$ crosses $\widetilde{\gamma}$.  From $a$, the distance to the bottom boundary component of $\del \widetilde{\Sigma}$ is achieved by following $\widetilde{\gamma}$, so the distance from $a$ to $\widetilde{\gamma}(0)$ is less than or equal to the distance from $a$ to $b$ along $\pi$.  Thus, the path from $x$ to $a$ to $\widetilde{\gamma}(0)$ has length at most the length of $\pi$, which is less than $D$. Therefore, this path lies in  $B(x, D)$, so does the path from $b$ to $a$ to $\widetilde{\gamma}(0)$.  This completes the proof of statement (1), and the proof of statement (2) is exactly analogous.

To prove statement (3), again the geodesic $\pi$ from $x$ to $b$ crosses $\widetilde{\gamma}$ at a point $a$.  The length of $\widetilde{\gamma}$ is less than or equal to the length from $\widetilde{\gamma}(0)$ to $a$ to $b$, as well as the length from $\widetilde{\gamma}(L)$ to $a$ to $b$, because $\widetilde{\gamma}$ has minimum length among paths connecting distinct components of $\del \widetilde{\Sigma}$.  Thus, the paths from $x$ to $a$ to each endpoint of $\widetilde{\gamma}$ have length less than $D$, so all of $\widetilde{\gamma}$ is in $B(x, D)$.
\end{proof}

We denote by $\{Z_\alpha\}$ the collection of path-connected components of the intersection of $Z$ with the right side of $\widetilde{\Sigma} \setminus \widetilde{\gamma}$, and by $\{U_\beta\}$ the collection of path connected components of the complement of $Z$ in the right side of $\widetilde{\Sigma} \setminus \widetilde{\gamma}$.

\begin{lemma}\label{l:Uj}
    \begin{enumerate}
\item Two points $a,b\in \widetilde{\gamma}$ are in the same component $U_j\in \{U_\beta\}$ iff there is no arc in some $Z_i\in \{Z_\alpha\}$ that connects  the open interval $(a,b)$ between $a$ and $b$ in $\widetilde{\gamma}$ to $\partial \widetilde{\Sigma}\cup \widetilde{\gamma}\setminus [a,b]$.

\item Let $p = \sup \{ t \mid \tilde{\gamma}(t) \in U_j \}$, and suppose $\tilde{\gamma}(p) \in Z_i$. Then $U_j$ intersects the top component of $\partial \tilde{\Sigma}$ if and only if no arc in $Z_i$ connects $\tilde{\gamma}(p)$ to either $\tilde{\gamma}([0, p))$ or to any boundary component of $\partial \tilde{\Sigma}$ on the right side of $\widetilde{\Sigma}\setminus \widetilde{\gamma}$ other than the top one.

Let $p = \inf \{ t \mid \tilde{\gamma}(t) \in U_j \}$, and suppose $\tilde{\gamma}(p) \in Z_i$. Then $U_j$ intersects the bottom component of $\partial \tilde{\Sigma}$ if and only if no arc in $Z_i$ connects $\tilde{\gamma}(p)$ to either $\tilde{\gamma}((p, L])$ or to any boundary component of $\partial \tilde{\Sigma}$ on the right side of $\widetilde{\Sigma}\setminus \widetilde{\gamma}$ other than the bottom one.
\end{enumerate}
\end{lemma}
\begin{proof}
\begin{enumerate}
    \item Let $D$ be the right side of $\widetilde{\Sigma}\setminus \widetilde{\gamma}$ including $\widetilde{\gamma}$.

($\Rightarrow$) Suppose there exists an arc $\ell\subset Z_i$ in $D$ connecting a point $c \in (a,b)$ to a point $d \in \partial D \setminus [a,b]$. Since $D$ is simply connected, applying the Mayer–Vietoris sequence to the complement of $\ell$ and a small neighborhood of $\ell$ we obtain that $D\setminus \ell$ has two connected components: one containing $a$  and the other containing $b$. Hence, $a$ and $b$  cannot belong to the same $U_j$.

($\Leftarrow$) For the converse, it is enough to show that if an arc has both of its endpoints in  $(a,b)$, then the arc cannot separate $a$ and $b$ in $D$. We can assume that the interior of the arc lives outside $\widetilde{\gamma}$. Then we can construct a simple closed loop by concatenating this arc with the geodesic joining the endpoints of the arc along $\widetilde{\gamma}$. Since $D$ is simply connected, this loop bounds a disk in $D$. By Mayer--Vietoris sequence applied to a small neighborhood of this disk and its complement, we have that the complement of this disk is path connected and both $a,b$ live in the complement of that disk.

\item We only prove the first part, the second part is analogous.

($\Rightarrow$) If there is an arc in $Z_i$ connecting $p$ to $\widetilde{\gamma}([0,p))$, say at $\widetilde{\gamma}(q)$, then by a similar argument as is part (1), $Z_i$ separates $\widetilde{\gamma}((q,p))$ from the top component. Since  $U_j$ intersects $\widetilde{\gamma}((q,p))$, it follows that $U_j$ does not intersect the top component.
Similarly, if there is an arc in $Z_i$ connecting $p$ to any other component other than the top component, then $Z_i$ separates $\widetilde{\gamma}((0,p))$ from the top component. Since  $U_j$ intersects $\widetilde{\gamma}((0,p))$, it follows that $U_j$ does not intersect the top component.

 ($\Leftarrow$) Let $x:=\widetilde{\gamma}(t)\in \widetilde{\gamma}([0,p))$ be a point in $U_j$ such that $\widetilde{\gamma}([t,p))\subset U_j$. For the converse, suppose there is no arc in $Z_i$ that connects $p$ to either $\widetilde{\gamma}([0,p))$, or any boundary components other than the top boundary component. That means arcs in $Z_i$ connects $p$ to either $\widetilde{\gamma}((p,L])$ or to the top component. In either case, arguing as the converse part of the part (1), we can show that $Z_i$ cannot separate  $x$ and the top boundary component. It remains to show that, no other $Z_j$ separates $x$ and the top component. 
On the contrary, if some $Z_j$ with $j\neq i$ does that, by a similar argument as in part (1), $Z_j$  connects $\widetilde{\gamma}((x,L))$ to either $\widetilde{\gamma} ([0,x))$ or a boundary component other than the top (and similar replacements in the next two sentences).
By our choice of $x$, it follows that $Z_j$ must connect  $\widetilde{\gamma}((p,L])$ to either $\widetilde{\gamma} ([0,x))$ or a boundary component other than the top component.
On the other hand, $Z_i$ separates $\widetilde{\gamma}((p,L])$ from $\widetilde{\gamma} ([0,x))$ and any boundary component other than the top.
Therefore, $Z_j$ intersects $Z_i$ which is a contradiction.
\end{enumerate}
 \end{proof}

The following lemma contains the main construction needed for the proof of Proposition~\ref{lem:surface}, and Lemmas~\ref{lem:fill-in} and~\ref{lem:fill-out} will let us confirm that it produces a $D+\varepsilon$-separator.

\begin{lemma}\label{lem:draw-z}
Let $\widetilde{\Sigma}$, $\widetilde{\gamma}$, and $Z$ be as in Proposition~\ref{lem:surface}.  Then for every $\varepsilon > 0$ there is a finite graph $Z'$ of $[0, \varepsilon] \times [0, L]$ with the following properties:
\begin{enumerate}
    \item $Z'$ contains the segment $\{\varepsilon\} \times [0, L]$.
    \item Let $Z'_1, \ldots, Z'_k$ be the path components of $Z'$, except for $\{\varepsilon\}\times [0, L]$ if it is its own component, and let $U'_1, \ldots, U'_\ell$ be the path components of the complement of $Z'$ in $[0, \varepsilon] \times [0, L]$.  Then every $Z'_i$ and every $U'_j$ contains a point of the vertical segment $\{0\} \times [0, L]$.  Abusing notation, let $\min(Z'_i)$ and $\max(Z'_i)$, or respectively $\inf(U'_j)$ and $\sup(U'_j)$, denote the infimal and supremal values of $t$ such that $(0, t) \in Z'_i$, or respectively $(0, t) \in U'_j$.
    \item 
There exist $Z_1, \ldots, Z_k$ in $\{Z_\alpha\}$ and $U_1, \ldots, U_\ell$ in $\{U_\beta\}$, such that for each $t \in [0, L]$, we have $\widetilde{\gamma}(t) \in Z_i$ if and only if $(0, t) \in Z'_i$, and $\widetilde{\gamma}(t) \in U_j$ if and only if $(0, t) \in U'_j$.

    \item If $(s, t) \in Z'_i$ (resp. $(s, t) \in U'_j$) and $t > \max(Z'_i)$ (resp. $t > \sup U'_j$), then $Z_i$ (resp. $U_j$) contains a point in a component of $\del \widetilde{\Sigma}$ other than the bottom component.  Similarly, if $(s, t) \in Z'_i$ (resp. $U'_j$) and $t < \min(Z'_i)$ (resp. $t < \inf(U'_j)$), then $Z_i$ (resp. $U_j$) contains a point in a component of $\del \widetilde{\Sigma}$ other than the top component.
\end{enumerate}
\end{lemma}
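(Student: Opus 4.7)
The plan is to build $Z'$ by transferring the combinatorial pattern of how the components of $Z$ and $\widetilde{\Sigma}\setminus Z$ meet $\widetilde{\gamma}$, as observed in the right side $R$ of $\widetilde{\Sigma}\setminus\widetilde{\gamma}$, into the strip $[0,\varepsilon]\times[0,L]$. Since $Z\cap\widetilde{\gamma}$ is finite by hypothesis, decompose $\widetilde{\gamma}$ into touch points $a_1,\dots,a_n$ at parameters $t_1<\dots<t_n$ and intermediate open subarcs $I_0,\dots,I_n$. Each $a_i$ lies in a component $Z_{f(i)}$ of $Z\cap R$ and each $I_j$ lies in a component $U_{g(j)}$ of $R\setminus Z$; I index only those components meeting $\widetilde{\gamma}$. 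I would identify a thin collar of $\widetilde{\gamma}$ in $R$ with $[0,\varepsilon]\times[0,L]$ via a homeomorphism sending $\widetilde{\gamma}(t)\mapsto(0,t)$, chosen so that $Z$ enters the collar as short perpendicular stubs at each $(0,t_i)$, labelled by $f(i)$.

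To form $Z'$, I include $\{\varepsilon\}\times[0,L]$, keep the perpendicular stubs, and add connecting arcs inside the strip that merge all stubs of the same $f$-label into a single connected set $Z'_i$. These connecting arcs are obtained from actual paths inside each $Z_i\cap R$ joining the respective touch points, pushed into the collar by a planar ambient isotopy supported in a neighborhood of $\widetilde{\gamma}$; the simply connected planar structure of $R$ ensures that the pushed arcs corresponding to distinct components can be made pairwise disjoint. The complementary components of the strip then correspond to the $U_j$'s via the obvious identification on $\{0\}\times[0,L]$, establishing properties (1)--(3).

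The main obstacle is property (4), which requires the vertical extension of $Z'_i$ (or $U'_j$) beyond its touch range on $\{0\}\times[0,L]$ to be matched by a boundary contact of $Z_i$ (or $U_j$) in an appropriate side component of $\del\widetilde{\Sigma}$. My plan is to route the connecting paths as tightly as possible: for each $Z_i$, I would choose the path inside $R$ so that it lies in the subsurface of $R$ between the extremal touches of $Z_i$ on $\widetilde{\gamma}$ whenever $Z_i$ has no access to additional boundary components. In particular, if the boundary contacts of $Z_i$ sit only on $\widetilde{\gamma}$ and possibly on the top component of $\del\widetilde{\Sigma}$, the path can be taken to avoid the region below $t_{i_1}$ (the smallest touch parameter of $Z_i$), so after pushing into the collar the arc does not drop below $\min(Z'_i)$; any downward extension therefore forces a boundary contact of $Z_i$ outside the top component, which is exactly (4). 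The symmetric argument handles upward extension, and the $U_j$ regions are treated by analogous reasoning on the complement. The technical work is making this routing precise and performing it simultaneously for all components; I would handle this by an induction on the nesting depth of the chord-like connections, drawing arcs from the innermost pairs outward so that each new arc respects the constraints of the already-drawn configuration.
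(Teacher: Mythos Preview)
Your overall strategy---encode in the strip the combinatorial pattern of how the right-side components $Z_i$ and $U_j$ meet $\widetilde{\gamma}$---is the paper's strategy, and your observation that the planarity of $R$ forces a nested (non-crossing) pattern among the touch points is correct and is what makes disjointness achievable. But your implementation diverges from the paper's and leaves a real gap at property~(4).

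The paper does not push actual paths from $Z_i$ into a collar by isotopy. It builds each $Z'_i$ as an explicit rectilinear set: horizontal stubs $[0,\varepsilon/2^i]\times\{t\}$ at every touch parameter, joined by one vertical segment at $x=\varepsilon/2^i$ whose extent is \emph{prescribed by which boundary components $Z_i$ meets}. If $Z_i$ meets no part of $\del\widetilde{\Sigma}$, the segment runs exactly over $[\min(Z'_i),\max(Z'_i)]$; if it meets the top (resp.\ bottom) component, the segment is extended to $L$ (resp.\ to $0$). Crucially, a preliminary case split---does some $Z_i$ connect top to bottom, or does some $U_j$?---determines how to handle a $Z_i$ meeting a \emph{third} boundary component: in the second case a chosen reference point $\widetilde{\gamma}(p)$ decides whether such a $Z_i$ extends up or down. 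Disjointness comes from processing the $Z_i$ in an adjacency order with strictly decreasing $x$-coordinate. Property~(4) for each $Z'_i$ is then immediate from the extension rule, and for each $U'_j$ it is \emph{inherited} from the $Z'_i$ that encloses it.

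Your proposal is missing exactly this. You give a rule only when $Z_i$ meets no boundary or only the top or bottom; you say nothing about a third boundary component, and there property~(4) forces a choice of direction that must be made consistently across all components. Your ``analogous reasoning on the complement'' for $U'_j$ is not analogous: the $U_j$ are $2$-dimensional regions, there is no path to push, and the paper's mechanism for $U'_j$ depends on first having fixed the boundary-based extension rule for the surrounding $Z'_i$. Finally, the phrase ``the path can be taken to avoid the region below $t_{i_1}$'' conflates $R$ with the collar: a path in $Z_i$ lives in $R$ and has no intrinsic $t$-coordinate until you specify the isotopy, so the claim is really about constructing the isotopy with a prescribed $t$-extent---which is precisely the content that the paper's explicit rectilinear rule supplies and that your sketch does not.
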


\begin{proof} We already know what the subsets $Z_1, \ldots, Z_k$ and $U_1, \ldots, U_\ell$ are: among the components of the intersection of $Z$ with the right side of $\widetilde{\Sigma}$, components $Z_1, \ldots, Z_k$ are those that touch $\widetilde{\gamma}$, and among the components of the complement of $Z$ in the right side of $\widetilde{\Sigma}$, components $U_1, \ldots, U_\ell$ are those that touch $\widetilde{\gamma}$.    
Our task is to construct a corresponding set $Z'_i \subseteq [0, \varepsilon] \times [0, L]$ for each $Z_i$.  Figure~\ref{fig:draw-z} and Figure~\ref{fig:draw-z1}  sketches what the construction may look like in different cases.

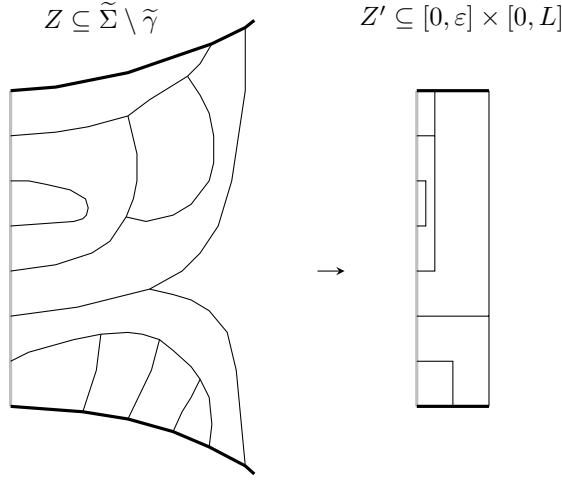
\begin{figure}
\begin{center}
\begin{tikzpicture}[>=stealth, scale=.12]
\draw[gray!50, very thick] (0, -15)--(0, 20);
\draw[very thick] (0, 20)--(5, 20.4)--(13, 22)--(22.3, 25.2)--(26, 27)--(27, 27.8);
\draw[very thick] (0, -15)--(8, -15.6)--(13, -16.4)--(18, -17.8)--(22, -19.5)--(26, -21.6)--(27, -22.5);

\draw (0, 15)--(5, 15.4)--(8.6, 16)--(13, 17.2)--(15.4, 19)--(19.6, 21.6)--(21, 23)--(22.3, 25.2);
\draw (0, 0)--(5, .7)--(9, 2)--(11, 3.3)--(12.8, 6)--(13.6, 8)--(14, 10)--(14, 13)--(13, 17.2);
\draw (12.8, 6)--(15, 5.5)--(18, 6.2)--(20, 7.4)--(22, 10)--(22.5, 12)--(22.5, 15)--(22, 17.5)--(21, 19.5)--(19.6, 21.6);
\draw (0, 5)--(7, 5.5)--(8, 5.8)--(8.4, 6.2)--(8.6, 7)--(8.3, 8)--(6.6, 9)--(4, 9.6)--(2, 10)--(0, 10);
\draw (0, -5)--(7.4, -4)--(15.4, -2)--(19, 0)--(21, 2)--(23, 5)--(24.5, 10)--(26, 20)--(26, 27);
\draw (15.4, -2)--(18, -2.6)--(20, -3.2)--(22, -4.4)--(24, -6.8)--(25, -11)--(26, -21.6);
\draw (0, -10)--(2, -9)--(7, -7.6)--(10, -7)--(13, -6.8)--(14.6, -7)--(16.5, -7.6)--(18, -8.8)--(20, -10.6)--(21, -12)--(22, -14)--(22.3, -17)--(22, -19.5);
\draw (10, -7)--(9.5, -11)--(8, -15.6);
\draw (16.5, -7.6)--(15.6, -11)--(13, -16.4);
\draw (21, -12)--(20, -14)--(19, -15.6)--(18, -17.8);
\node at (6, 30) {Part of $Z$ lying to the right of $ \widetilde{\gamma}$};
 \node at (50, 30) {$Z' \subseteq [0, \varepsilon] \times [0, L]$};
\draw[->] (34,0)--(37,0);

\begin{scope}[shift={(45, 0)}]
\draw[gray!50, very thick] (0, -15)--(0, 20);
\draw[very thick] (0, 20)--(8, 20);
\draw[very thick] (0, -15)--(8, -15);
\draw (0, -5)--(8, -5) (8, -15)--(8, 20);
\draw (0, -10)--(4, -10)--(4, -15);
\draw (0, 0)--(2, 0)--(2, 20) (0, 15)--(2, 15);
\draw (0, 5)--(1, 5)--(1, 10)--(0, 10);
\end{scope}
\end{tikzpicture}
\end{center}
\caption{For each $Z_i$, the corresponding $Z'_i$ connects to the same points along the left edge.  If $Z_i$ connects to $\del \widetilde{\Sigma}$, then $Z'_i$ connects to the top or bottom of the rectangle.}\label{fig:draw-z}
\end{figure}

First we consider the special case where some $Z_i$ contains a point from the top component of $\del \widetilde{\Sigma}$ and also a point from the bottom component of $\del \widetilde{\Sigma}$ (see Figure~\ref{fig:draw-z}).  We may reorder to call this set $Z_1$.  Then we construct $Z'_1$ to consist of the vertical segment $\{\varepsilon\} \times [0, L]$, along with horizontal segments $[0, \varepsilon] \times \{t\}$ for every $t$ with $\widetilde{\gamma}(t) \in Z_1$.

Next, we construct the remaining sets $Z'_i$ iteratively, in a specific order.  Suppose that we have selected which sets are labeled $Z_1, \ldots, Z_{i-1}$ in the ordering, and constructed the corresponding sets $Z'_1, \ldots, Z'_{i-1}$.  To select $Z_i$, look at the points of $\widetilde{\gamma} \cap Z$ in sequence from bottom to top, and select a point among these that is not in $Z_1, \ldots, Z_{i-1}$ but is adjacent, in the sequence, to one of those points.  Then let $Z_i$ be the component of $Z$ containing this point.

The set $Z'_i$ will consist of a vertical segment at horizontal coordinate $\frac{\varepsilon}{2^i}$, along with horizontal segments $[0, \frac{\varepsilon}{2^i}] \times \{t\}$ for every $t$ with $\widetilde{\gamma}(t) \in Z_i$.  The extent of the vertical segment depends on $Z_i$ in the following way.  If $Z_i$ contains a point from the top component of $\del \widetilde{\Sigma}$, then the vertical segment is $\{\frac{\varepsilon}{2^i}\}\times[\min(Z'_i), L]$.  If $Z_i$ contains a point from the bottom component of $\del \widetilde{\Sigma}$, then the vertical segment is $\{\frac{\varepsilon}{2^i}\}\times[0, \max(Z'_i)]$.  Because of the special case we are in, $Z_1$ prevents $Z_i$ from containing points from any other boundary components. 
If $Z_i$ does not contain a point of $\del \widetilde{\Sigma}$, then the vertical segment is $\{\frac{\varepsilon}{2^i}\}\times[\min(Z'_i), \max(Z'_i)]$.

After doing this process for each $i$, the resulting set $Z' = \bigcup_{i=1}^k Z'_i$ should look very much like the right side of $Z$, but rectilinear.  Our choice of how to order the sets $Z_i$, together with our choice of vertical segments moving toward the left, guarantees that none of the sets $Z'_i$ intersect each other.

Still in the special case, let us check the properties specified by the lemma statement. 
We find that properties (1), (2) and (3) are automatic for $Z'_i$ from the construction.  Property (4) holds for $Z_i$ because for each $Z'_i$ the vertical segment reaches the top of the rectangle exactly when $Z_i$ touches the top component of $\del \widetilde{\Sigma}$, and the vertical segment reaches the bottom of the rectangle exactly when $Z_i$ touches the bottom component of $\del \widetilde{\Sigma}$.

It remains to check the properties for $U'_j$. For each  $U_j'$, we let $U_j$ to be a path component of the complement of $Z$ on the right side of $\widetilde{\Sigma}\setminus \widetilde{\gamma}$ that has a common point along $\widetilde{\gamma}$ with $U'_j$: there exists a $t$ such that $(0,t)\in U_j'$ and $\widetilde{\gamma}(t)\in U_j$.
Property (3) and (4) are the only nontrivial properties to check.

To verify (3),
 for simplicity, we simplify notation by identifying $\{0\} \times [0,L]$ with $\widetilde{\gamma}$ itself.  
 We aim to show that $U_j'\cap\widetilde{\gamma}=U_j\cap \widetilde{\gamma}$.
Suppose $y\in U_j'\cap \widetilde{\gamma}$. 
 Let $x\in \widetilde{\gamma}$ be a point lying in both $U_j'$ and $U_j$.
 Let $I$ be the interval between $x$ and $y$ along $\widetilde{\gamma}$.
 Since $x$ and $y$ belong to the same path component of $[0,\varepsilon]\times [0,L]\setminus Z'$, no component $Z_i'$ connects the interval $I$ with a point in the boundary of $[0,\varepsilon]\times [0,L]$ that lies outside $I$.
 By the construction of the set $Z_i'$, it follows that there is no $Z_i$ that connects $I$ to a point in $\partial{\Sigma}\cup \widetilde{\gamma}\setminus I$ on the (closure of the) right side of $\widetilde{\Sigma}\setminus \widetilde{\gamma}$.
 By Lemma~\ref{l:Uj}(1), we have $y\in U_j$.
 This proves $U_j'\cap\widetilde{\gamma}\subset U_j\cap \widetilde{\gamma}$. 
 The reverse inclusion follows by a similar argument, completing the proof of property (3).

Next we check the property (4) for $U_j'$.
Suppose $(s,t)\in U_j'$ and $t>\sup(U_j')$.  
Pick $x< \sup (U_j')$ such that $\{0\}\times [x,\sup (U_j'))\subset U_j'$.
There exists some $Z_i'$ that contains $(0,\sup(U_j'))$.
Moreover, by an argument similar to Lemma~\ref{l:Uj}(2),
 no arc of $Z'_i$ connects $(0,\sup (U_j'))$ to a point in $\{0\}\times [0,\sup (U_j'))$ or $[0,\varepsilon]\times \{0\}$ or $\{\varepsilon\} \times [0,L]$: because each such arc separates $(0,x)$ from $(s,t)$ by the construction of $Z_i'$.
It follows that $Z_i$ contains $\widetilde{\gamma}(\sup(U_j))$ and there is no arc of $Z_i$ that connects $\widetilde{\gamma}(\sup (U_j))$ to $\widetilde{\gamma}([0,\sup (U_j)))$ or the bottom component or any other boundary component on the right side of $\widetilde{\Sigma}\setminus \widetilde{\gamma}$ other than the top boundary component.
Therefore $U_j$ intersects the top component of $\del \widetilde{\Sigma}$ by Lemma~\ref{l:Uj}(2).
Similarly, we can prove the analogous statement when $(s,t)\in U_j'$ and $t<\inf(U_j')$.

If we are not in the special case, then there exists some component $U_j$ that either contains a point in a third component of $\del \widetilde{\Sigma}$, or contains a point in the top component of $\del \widetilde{\Sigma}$ and also a point in the bottom component of $\del \widetilde{\Sigma}$ (see Figure~\ref{fig:draw-z1}).  Let $\widetilde{\gamma}(p)$ be a point of $\widetilde{\gamma} \cap U_j$.  Note that for each $Z_i$, the points where it meets $\widetilde{\gamma}$ are either all above or all below $\widetilde{\gamma}(p)$, and those $Z_i$ that are above do not touch the bottom component of $\del \widetilde{\Sigma}$, while those $Z_i$ that are below do not touch the top component of $\del \widetilde{\Sigma}$.

\begin{figure}[htp]
\begin{center}
\begin{tikzpicture}[>=stealth, scale=.12]
\draw[gray!50, very thick] (0, -15)--(0, 20);
\draw[very thick] (0, 20)--(5, 20.4)--(13, 22)--(22.3, 25.2)--(26, 27)--(27, 27.8);
\draw[very thick] (0, -15)--(8, -15.6)--(13, -16.4)--(18, -17.8)--(22, -19.5)--(26, -21.6)--(27, -22.5);

\draw (0, 15)--(5, 15.4)--(8.6, 16)--(13, 17.2)--(15.4, 19)--(19.6, 21.6)--(21, 23)--(22.3, 25.2);
\draw (0, 0)--(5, .7)--(9, 2)--(11, 3.3)--(12.8, 6)--(13.6, 8)--(14, 10)--(14, 13)--(13, 17.2);
\draw (12.8, 6)--(15, 5.5)--(18, 6.2)--(20, 7.4)--(22, 10)--(22.5, 12)--(22.5, 15)--(22, 17.5)--(21, 19.5)--(19.6, 21.6);
\draw (0, 5)--(7, 5.5)--(8, 5.8)--(8.4, 6.2)--(8.6, 7)--(8.3, 8)--(6.6, 9)--(4, 9.6)--(2, 10)--(0, 10);
\draw (0, -5)--(7.4, -4)--(15.4, -2)--(19, 0)--(21, 2)--(23, 5)--(24.5, 10)--(26, 20)--(26, 27);

\draw (0, -10)--(2, -9)--(7, -7.6)--(10, -7)--(13, -6.8)--(14.6, -7)--(16.5, -7.6)--(18, -8.8)--(20, -10.6)--(21, -12)--(22, -14)--(22.3, -17)--(22, -19.5);
\draw (10, -7)--(9.5, -11)--(8, -15.6);
\draw (16.5, -7.6)--(15.6, -11)--(13, -16.4);
\draw (21, -12)--(20, -14)--(19, -15.6)--(18, -17.8);

\begin{scope}[shift={(40, 0)}]
\draw[gray!50, very thick] (0, -15)--(0, 20);
\draw[very thick] (0, 20)--(5, 20.4)--(13, 22)--(22.3, 25.2)--(26, 27)--(27, 27.8);
\draw[very thick] (0, -15)--(8, -15.6)--(13, -16.4)--(18, -17.8)--(22, -19.5)--(26, -21.6)--(27, -22.5);

\draw (0, 15)--(5, 15.4)--(8.6, 16)--(13, 17.2)--(15.4, 19)--(19.6, 21.6)--(21, 23)--(22.3, 25.2);
\draw (0, 0)--(5, .7)--(9, 2)--(11, 3.3)--(12.8, 6)--(13.6, 8)--(14, 10)--(14, 13)--(13, 17.2);
\draw (12.8, 6)--(15, 5.5)--(18, 6.2)--(20, 7.4)--(22, 10)--(22.5, 12)--(22.5, 15)--(22, 17.5)--(21, 19.5)--(19.6, 21.6);
\draw (0, 5)--(7, 5.5)--(8, 5.8)--(8.4, 6.2)--(8.6, 7)--(8.3, 8)--(6.6, 9)--(4, 9.6)--(2, 10)--(0, 10);
\draw (0, -5)--(7.4, -4)--(15.4,-2)--(19, 0);

\draw[very thick] (26,10)..controls (17,0)..(26,-11);
\draw (0, -10)--(2, -9)--(7, -7.6)--(10, -7)--(13, -6.8)--(14.6, -7)--(16.5, -7.6)--(18, -8.8)--(20, -10.6)--(21, -12)--(22, -14)--(22.3, -17)--(22, -19.5);
\draw (10, -7)--(9.5, -11)--(8, -15.6);
\draw (16.5, -7.6)--(15.6, -11)--(13, -16.4);
\draw (21, -12)--(20, -14)--(19, -15.6)--(18, -17.8);
\filldraw[black] (0,-7.5) circle (8pt) node[anchor=east]{$\widetilde{\gamma}(p)$};
\end{scope}

\filldraw[black] (0,-7.5) circle (8pt) node[anchor=east]{$\widetilde{\gamma}(p)$};
\draw[->] (75,0)--(78,0);

\begin{scope}[shift={(85, 0)}]
\draw[gray!50, very thick] (0, -15)--(0, 20);
\draw[very thick] (0, 20)--(12, 20);
\draw[very thick] (0, -15)--(12, -15);
\draw (0, -5)--(4, -5)--(4,20); 
\draw (12, -15)--(12, 20);
\draw (0, -10)--(7, -10)--(7, -15);
\draw (0, 0)--(2, 0)--(2, 20) (0, 15)--(2, 15);
\draw (0, 5)--(1, 5)--(1, 10)--(0, 10);

\end{scope}
\end{tikzpicture}
\end{center}
\caption{On the left are two typical instances of $Z$ in the non--special case; on the right is the corresponding $Z'$.}\label{fig:draw-z1}
\end{figure}
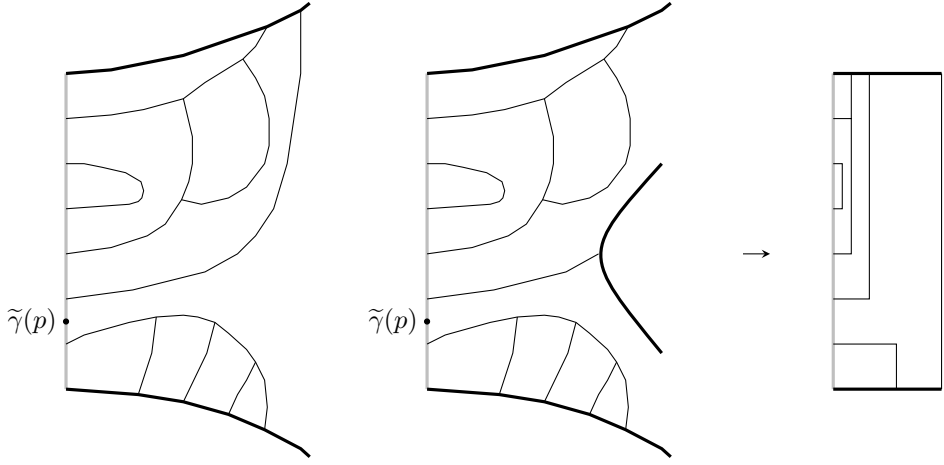

We start by letting $\{\varepsilon\} \times [0, L]$ be its own component of $Z'$.  Then we continue as in the special case, with the following modifications.  As $Z_1$ and $Z_2$ we select the components of the two points of $\widetilde{\gamma} \cap Z$ on either side of $\widetilde{\gamma}(p)$. 
The rest of the process for ordering $Z_3, \ldots, Z_k$ is the same as in the special case.  When determining the extent of the vertical segment in $Z'_i$, we use the following rule.  If $Z_i$ contains a point of $\del \widetilde{\Sigma}$ and $\widetilde{\gamma}\cap Z_i$ is above $\widetilde{\gamma}(p)$, then the vertical segment is $\{\frac{\varepsilon}{2^i}\} \times [\min(Z'_i), L]$.  If $Z_i$ contains a point of $\del \widetilde{\Sigma}$ and $\widetilde{\gamma} \cap Z_i$ is below $\widetilde{\gamma}(p)$, then the vertical segment is $\{\frac{\varepsilon}{2^i}\}\times [0, \max(Z'_i)]$.  If $Z_i$ does not contain a point of $\del \widetilde{\Sigma}$, then the vertical segment is $\{\frac{\varepsilon}{2^i}\}\times[\min(Z'_i), \max(Z'_i)]$.  As in the special case, each $Z'_i$ is a rectilinear version of $Z_i$, but we use the point $p$ to determine which paths to third boundary components of $\del \widetilde{\Sigma}$ become paths to the top of $[0, \varepsilon] \times [0, L]$ versus the bottom.

As in the special case, we only need to check properties (3) and (4), and our construction guarantees these properties for each $Z'_i$.  
We choose $U_j$ in relation to $U_j'$ in the same way as in the special case.
Property (3) for \( U_j' \) is a direct consequence of the construction of $U_j'$ and Lemma~\ref{l:Uj}(1), with the argument proceeding exactly as in the special case.
It remains to prove property (4) for $U_j'$. For each $U'_j$, if it is the component adjoining the right edge $\{\varepsilon\}\times[0, L]$, then the property holds because of how we have chosen $p$.  
For any other $U'_j$, the proof is the same as in the special case.
\end{proof}

Finally we are ready to finish the proof of Proposition~\ref{lem:surface}, completing the proof of Theorem~\ref{thm:surface} for the case of surface with boundary.

\begin{proof}[Proof of Proposition~\ref{lem:surface}]
First we note that $D \geq L$.  This is because either a path component of $Z$ that crosses $\widetilde{\gamma}$ contains points from two different components of $\del \widetilde{\Sigma}$, or there is a point in $\widetilde{\gamma}$ that has paths disjoint from $Z$ to two different components of $\del \widetilde{\Sigma}$. 
Either way, a component of $Z$ or its complement has diameter at least $L$.

We construct the part of $Z'$ in $[-M, -M+\varepsilon]$ by applying Lemma~\ref{lem:draw-z} to $\varepsilon$, and we construct the part of $Z'$ in $[M-\varepsilon, M]$ by applying Lemma~\ref{lem:draw-z} to $\varepsilon$, but in mirror image, with the left side of $\widetilde{\Sigma} \setminus \widetilde{\gamma}$ playing the role of the right side.  Then we add vertical segments $\{s\} \times [0, L]$ along the strip, including the values $s = -M + \varepsilon$ and $s = M-\varepsilon$ and with consecutive values spaced less than $\varepsilon$ apart.

To check that the resulting set $Z'$ is a $D+\varepsilon$-separator, first we check that for every point $x$ in the left side of $\widetilde{\Sigma} \setminus \widetilde{\gamma}$, if a connected component of $Z'$ or its complement contains both $x$ and a point $(s, t)$ of the strip, then the distance from $x$ to $(s, t)$ is at most $D + \varepsilon$.
To do this, it suffices to show that the distance from $x$ to $(-M, t)$ is at most $D$.  Let $Z'_i$ or $U'_j$ be the relevant component of $Z'$ or its complement.  If $t$ is in $[\min(Z'_i), \max(Z'_i)]$ or $[\inf(U'_j), \sup(U'_j)]$, then Lemma~\ref{lem:fill-in} implies that $(-M, t)$ is in $B(x, D)$.  Otherwise, property (4) of Lemma~\ref{lem:draw-z} allows us to find a boundary point $b$ in $Z'_i$ or $U'_j$, and then applying Lemma~\ref{lem:fill-out} we have that $(-M, t)$ is in $B(x, D)$.

The case of the distance between a point in the right side of $\widetilde{\Sigma} \setminus \widetilde{\gamma}$ and a point in the strip is exactly analogous.  If two points are on the left side, or two points are on the right side, their distance does not change, and our construction guarantees that if they are in the same component of $Z'$ or its complement, then they are also in the same component of $Z$ or its complement.  
If two points are in the strip, and are in the same component of $Z'$ or its complement, then their distance is at most $L + \varepsilon$, which is at most $D + \varepsilon$.  Thus, every pair of points in the same component of $Z'$ or its complement have distance at most $D + \varepsilon$.
\end{proof}

Now we deal with the remaining cases of Theorem~\ref{thm:surface}. If $\Sigma$ is a sphere, then $\widetilde{\Sigma} = \Sigma$ and the theorem is a tautology.  If $\Sigma$ has higher genus but has no boundary, then $\UW_1(\widetilde{\Sigma}) = \infty$ and the conclusion of the theorem is vacuously true. The only nontrivial case is when $\Sigma$ is a $\RP2$. The proof in this case is similar to the surface with boundary case. We need the following analogue of Proposition~\ref{lem:surface}.

\begin{proposition}[Main proposition for the $\RP2$ case]\label{lem:RP2}
Suppose $\Sigma$ is a  $\mathbb{R}\mathbb{P}^2$.
Let $\gamma$ be a homotopically nontrivial loop with the shortest length in $\Sigma$.
    Let $\widetilde{\gamma}$ be the lift of $\gamma$ in $\widetilde{\Sigma}$. 
  For any $M > 0$, let $\widetilde{\Sigma}'$ be the result of cutting apart $\widetilde{\Sigma}$ along $\widetilde{\gamma}$ and gluing in a band $[-M, M] \times \widetilde{\gamma}$, such that its ends $\{-M\} \times \widetilde{\gamma}$ and $\{M\} \times \widetilde{\gamma}$ attach isometrically to the two cut copies of $\widetilde{\gamma}$. Let $\Sigma'$ be the quotient of $\widetilde{\Sigma}'$ by the canonical antipodal map induced from the antipodal map on $\widetilde{\Sigma}$.  Suppose that $Z$ is a $D$-separator in $\widetilde{\Sigma}$ for some $D > 0$, containing only finitely many points of $\widetilde{\gamma}$.  Then for any $\varepsilon>0$ there exists an $M$ such that the corresponding $\Sigma'$ admits a $D+\varepsilon$-separator.
\end{proposition}
We first prove Theorem~\ref{thm:surface} for the $\RP2$ case assuming the above lemma.

\begin{proof}[Proof of Theorem~\ref{thm:surface} for $\RP2$]Suppose $\Sigma$ is $\mathbb{R}\mathbb{P}^2$ and let $\gamma$ be a homotopically nontrivial loop in $\Sigma$ of shortest length. 
We know that $\widetilde{\Sigma}$ admits a $D$-separator for some $D$ close to $\UW_1(\widetilde{\Sigma})$.  We apply proposition~\ref{lem:RP2} to the lifts of $\widetilde{\gamma}$ to obtain a $D+\varepsilon$-separator on the quotient $\Sigma'$ for some $M$.
There is a map $\Sigma'\to \Sigma$ that sends the quotient of the band $[-M,M]\times \widetilde{\gamma}$ in $\Sigma'$ to a small tubular neighborhood of $\gamma$ in $\Sigma$.
This map may be taken to be a  homeomorphism $\Sigma'\to \Sigma$ that increases distances by at most $\varepsilon$.
Thus, we obtain a $D+2\varepsilon$-separator on $\Sigma$. Since $\varepsilon$ may be taken arbitrarily small, we obtain $\UW_1(\Sigma)\leq D$.
\end{proof}

It remains to prove proposition~\ref{lem:RP2}. For that we need the following lemma which can be viewed as an analogue of Lemma~\ref{lem:fill-in}.
\begin{lemma}\label{lem:fill-in1}
Let $\Sigma$ and $\gamma$ be as in proposition~\ref{lem:RP2}. Then the following holds.
\begin{enumerate}
    \item For any two points on the lift $\widetilde{\gamma}$, there exists a geodesic segment connecting them that lies entirely within $\widetilde{\gamma}$.
\item Let $x$ be a point in the left component of $\widetilde{\Sigma} \setminus \widetilde{\gamma}$, and suppose that the ball $B(x, D)$ contains two points $a_1$ and $a_2$ on $\widetilde{\gamma}$, as well as a path from $a_1$ to $a_2$ in the right component of $\widetilde{\Sigma} \setminus \widetilde{\gamma}$.
 Let $q:\widetilde{\Sigma}\to \Sigma$ be the covering projection and $[a_1,a_2]$ be a minimizing geodesic path in $\widetilde{\gamma}$ between $a_1$ and $a_2$. 
 Then $B(q(x), D)$ contains $q([a_1,a_2])$.    
\end{enumerate}
\end{lemma}
\begin{proof}
\begin{enumerate}
\item Suppose the length of $\gamma$ is $L$.
We first claim that the distance between any two antipodal points in $\widetilde{\gamma}$ is $L$.
If not, then we can take such a pair of antipodal points which are $<L$ distance apart and hence there is a geodesic between these two points with length $<L$.
Projecting this geodesic to $\Sigma$, we obtain a nontrivial loop with length $<L$, which is a contradiction.
It follows from the same argument that both of the paths between any two antipodal points in $\widetilde{\gamma}$ are geodesic.
 If  $a,b\in\widetilde{\gamma}$, then $b$ lies on a geodesic joining $a$ and its antipode on $\widetilde{\gamma}$.
Therefore, there is a geodesic between $a$ and $b$ in $\widetilde{\gamma}$.

\item Note that the two connected components of $\widetilde{\gamma}\setminus \{a_1,a_2\}$ give two paths between $a_1$ and $a_2$, and one of them is a minimizing geodesic path $[a_1,a_2]$.
Moreover, $q([a_1,a_2])$ is contained in the $q$-image of the other path. 
Since the map $q:\widetilde{\Sigma}\to\Sigma$ is non-increasing, it is enough to prove that $B(x,D)$ contains at least one path between $a_1$ and $a_2$ on $\widetilde{\gamma}$.

Let $\pi$ be the path in $B(x,D)$ from $a_1$ to $a_2$ in the right component of $\widetilde{\Sigma}\setminus \widetilde{\gamma}$.  We draw geodesics back to $x$ from every point of $\pi$. 
Suppose on the contrary $B(x,D)$ contains neither path between $a_1$ and $a_2$ on $\widetilde{\gamma}$.
Then there are at least two points $a,a'$ on $\widetilde{\gamma}$ that are outside $B(x,D)$. Moreover, these two points cut $\widetilde{\gamma}$ into two connected components $C_1$ and $C_2$ such that  $a_1\in C_1$ and $a_2\in C_2$.
Arguing as in Lemma~\ref{lem:fill-in}, we can obtain point $b$ in $\pi$ such that there are two geodesics from $x$ to $b$, one crossing $\widetilde{\gamma}$ in $C_1$ at a point $c_1$, and the other crossing $\widetilde{\gamma}$ in $C_2$ at a point $c_2$.
Either $a$ or $a'$ must lie on the minimizing geodesic on $\widetilde{\gamma}$ between $c_1$ and $c_2$.
Without loss of generality, suppose $a$ lies on the minimizing geodesic on $\widetilde{\gamma}$ between $c_1$ and $c_2$.
We have
\begin{align*}
2D \leq 2\cdot d(x, a)  &\leq d(x, c_1) + d(c_1, a) + d(x, c_2) + d(c_2, a) \\
&= d(x, c_1) + d(c_1, c_2) + d(x, c_2) \\
&\leq d(x, c_1) + d(c_1, b) + d(x, c_2) + d(c_2, b) \leq 2 \cdot d(x, b) < 2D,
\end{align*}
giving a contradiction.

\end{enumerate}
\end{proof}

We now proceed to prove proposition~\ref{lem:RP2} which will finish the proof of Theorem~\ref{thm:surface}.  

\begin{proof}[Proof of proposition~\ref{lem:RP2}] Let $Z$ be a $D$-separator on $\widetilde{\Sigma}$.
    Let $\gamma$ be a homotopically nontrivial loop in $\Sigma$ of shortest length and 
    let $\widetilde{\gamma}$ be the lift of $\gamma$ in $\widetilde{\Sigma}$.
    
    We first note that $\diam(\widetilde{\gamma})\leq D$.
    Since $\UW_1(\widetilde{\Sigma})\leq D$ and $\widetilde{\Sigma}$ is simply connected, there exists a map from $\widetilde{\Sigma}$ to a compact tree such that the diameter of each fiber is arbitrarily close to $D$. Since we can embed a compact tree into $\mathbb{R}^2$, we have a map $\widetilde{\Sigma}\to \mathbb{R}^2$ such that diameter of each fiber is arbitrarily close to $D$.
    By Borsuk--Ulam theorem, there exists at least one fiber that contains two antipodal points of $\widetilde{\Sigma}$, where antipodal points are those which get identified in $\Sigma$.
    Consequently, there exist two antipodal points in $\widetilde{\Sigma}$ with distance arbitrarily close to $D$.
    Taking the geodesic joining these two antipodal points and then projecting to $\Sigma$ gives a nontrivial loop on $\Sigma$ with length arbitrarily close to $D$.
    Since $\gamma$ was chosen to be length minimizing, we obtain that $\diam(\widetilde{\gamma})\leq D$.
   
For convenience, we will assume that each connected component of $Z$ is a simple loop by replacing each component by the boundary of its thin regular neighborhood. 
Also, without loss of generality, we can assume that $Z$ intersects $\widetilde{\gamma}$ transversely, and in particular at finitely many points.
Consequently, we can assume that each connected component of $Z$ that intersects $\widetilde{\gamma}$ does so in at least two points.

Note that $\widetilde{\Sigma}\setminus \widetilde{\gamma}$ is a disjoint union of two disks which we refer to as the left disk and the right disk.  
Next we are going to iteratively replace each arc $l_i$ of $Z$  connecting two points in $\widetilde{\gamma}$ in the right disk by replicating them on the part of $[-M,M]\times \widetilde{\gamma}$ next to the left disk.
Suppose that we have selected which arcs are labeled $l_1,l_2,\ldots,l_{i-1}$ in the ordering, and constructed the corresponding modified arcs $l'_1,l'_2,\ldots, l'_{i-1}$.
We let $l_i$ be the arc with the maximal distance between its end points among the remaining arcs. 
Let $a$ and $b$ be the end points of $l_i$ and let $[a,b]$ be a geodesic between $a$ and $b$ on $\widetilde{\gamma}$.
Note that such geodesic exists by Lemma~\ref{lem:fill-in1}(1).

We replace $l_i$ with a vertical circular segment $\{-M+\frac{\varepsilon}{2^i}\}\times[a, b]$ along with horizontal segments $[-M,-M+\frac{\varepsilon}{2^i}]\times \{a,b\}$ and call it $l'_i$.  
Next, we add vertical circles $\{s\}\times \widetilde{\gamma}$ along the band $[-M,0]\times \widetilde{\gamma}$, including the values $s=-M+\varepsilon$ and $s=0$ and with consecutive values spaced less than $\varepsilon$ apart.  
This gives us the $Z'$ on the left half of the $\widetilde{\Sigma}'$.
By applying the antipodal map we get the $Z'$ on the other half of $\widetilde{\Sigma}'$.

 For each connected component $Z_i$ of $Z$ on the left disk of $\widetilde{\Sigma}$, let $Z_i'$ be the modified $Z_i$. 
We now check that $Z_i'$'s do not intersect each other by construction.
It is enough to show that for any $i>j$, $l'_{i}$ does not intersect $l'_j$.
On the contrary, suppose $l_i'$ intersects $l_j'$ for some $i>j$.
We let $\gamma_i$ denote a length-minimizing geodesic along $\widetilde{\gamma}$ joining the endpoints of $l_i$.
Then at least one of the end points of $l_j$ is on $\gamma_i$.
Since $j<i$, by the choice of $l_i$, the length of $\gamma_i$ is at most the length of $\gamma_j$ and therefore at least one endpoint of $l_j$ is outside $\gamma_i$.
Since we are in a disk, these together imply that $l_i\cap l_j\neq \emptyset$ which is a contradiction.

Next, we check that the projection of the set $Z'$ under the covering projection map $q':\widetilde{\Sigma}'\to \Sigma'$ gives a $D+\varepsilon$-separator of $\Sigma'$ for small enough $M$.

If two points are on the left disk, then their distance does not increase in the quotient and our construction guarantees that if they are in the same component of $Z'$ or its complement, then they are also in the same component of $Z$ or its complement.  If two points are in the band, and are in the same component of $Z'$ or its complement, then their distance is at most $\diam(\widetilde{\gamma}) + \varepsilon$, which is at most $D + \varepsilon$.  Thus, the images under the quotient of every pair of such points in the same component of $Z'$ or its complement have distance at most $D + \varepsilon$. 

It is now enough to show that we can choose an $M$ so small that for every point $x$ in the left disk of $\widetilde{\Sigma} \setminus \widetilde{\gamma}$, if a connected component of $Z'$ or its complement contains both $x$ and a point $(s, t)$ of the band $[-M,M]\times \widetilde{\gamma}$, then the distance from $q'(x)$ to $q'(s, t)$ is at most $D + \varepsilon$.    
To do this, it suffices to show that for some $M$, the distance from $q'(x)$ to $q'(-M, t)$ is at most $D+\varepsilon$.

To this end, consider the horizontal path $\beta$ from $(s,t)$ to $(-M,t)$.
Suppose, $\beta$ does not intersect $Z'$.
Then $(-M,t)$ and $x$ both belong to the same component of $Z'$ or its complement.
By our construction of $Z'$, it follows that $(-M,t)$ and $x$ belong to the same component of $Z$ or its complement and hence their distance is at most $D$.
Consequently, the distance from $q'(x)$ to $q'(-M, t)$ is at most $D+\varepsilon$ in this case.
Now suppose $\beta$ does intersect some $l'_i$ of $Z_i'$ at $(s_i,t)$.
Without loss of generality, we assume that no other $l_j$ intersects $\beta$ between $(s,t)$ and $(s_i,t)$.
It follows that either $x\in Z_i$ or $x$ is contained in some $U_j$ such that the closure of $U_j$ contains $Z_i$.
In either cases, $B(x,D)$ contains $l_i$.
Since $\beta$ intersects $l_i'$, it follows that $t$ lives in the minimizing geodesic connecting the end points of $l_i$.
Lemma~\ref{lem:fill-in1} implies $d(q(x),q(t))\leq D$ where $q:\widetilde{\Sigma}\to \Sigma$ is the covering projection.
There exists a map $\Sigma'\to \Sigma$ that  sends the quotient of the band $[-M,M]\times \widetilde{\gamma}$ in $\Sigma'$ to $\gamma$ in $\Sigma$ by forgetting the first coordinates and maps $q'(x)$ to  $q(x)$ for all $x$ outside the band.
This map changes the distance on the order of $M$, and therefore we can choose small enough $M$ depending on $\varepsilon$ to have the following
\begin{align*}
    d(q'(x),q'(-M,t))\leq d(q(x),q(t))+\varepsilon \leq D+\varepsilon.
\end{align*}
This finishes the proof.

\end{proof}

\begin{remark}
    It is reasonable to conjecture that Theorem~\ref{thm:surface} holds for any Riemannian manifold. The simplest case that we do not know the answer to is the three-dimensional handlebody.
    \begin{question}
        Does there exist a constant $c>0$, such that for any 3-dimensional handlebody $X$ with a Riemannian metric, we have $\UW_1(X)\leq c\cdot \UW_1(\widetilde{X})$? 
    \end{question}
\noindent    The question remains open even in the case where $X$ is a handlebody of genus 2.
\end{remark}

\begin{remark}
    Balitskiy and Berdnikov~\cite{BB21} proved that if a closed Riemannian manifold $M$ has first $\mathbb{Z}_2$-Betti number $\beta$ and every unit ball in $M$ has 1-width less than $\frac{1}{15}$, then the Uryson 1-width of $M$ satisfies $\UW_1(M) \leq \beta + 1$.
In light of Theorem~\ref{thm:surface}, one might wonder whether the dependence on the first Betti number can be removed in the case of surfaces with boundary. However, this is not the case.
For example, consider the surface $M$ from Example~\ref{e:example}, and remove a unit-radius ball to obtain a surface with boundary. This new surface still has large 1-width---on the order of $R$.
On the other hand, the covering map $p : \widehat{M} \to M$ is an isometry when restricted to balls of radius 1, provided $R \gg 1$.
Since $\UW_1(\widehat{M})$ is small (on the order of 1), it follows that the 1-width of unit balls in $M$ is also of order 1. This illustrates that local control of 1-width (at the scale of unit balls) is not sufficient to bound the global 1-width even for surfaces with boundaries.
\end{remark}

\section{Reduction to low dimensions}\label{s4}
In this section, our goal is to prove Theorem~\ref{thm:low dim reduction}.
The first part of Theorem~\ref{thm:low dim reduction} is established in Proposition~\ref{p:2dim}, which is then used to prove the second part, appearing as Theorem~\ref{t:4dim}.
We start with the following.
\begin{proposition}[Manifold reduction]\label{p:mfld}
Suppose that $\{X_n\}_{n=1}^\infty$ is a sequence of compact Riemannian polyhedra such that $\{\dim(X_n)\}$ is bounded and the ratio $\displaystyle\frac{\UW_1(X_n)}{\UW_1(\widetilde{X_n})}$ is unbounded. Then there is a related sequence $\{Z_n\}_{n=1}^\infty$ of closed Riemannian manifolds such that $\dim(Z_n)=2\dim(X_n)$ and the ratio $\displaystyle\frac{\UW_1(Z_n)}{\UW_1(\widetilde{Z_n})}$ is unbounded.
\end{proposition}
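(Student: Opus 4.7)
Let $d$ denote the uniform bound on $\dim X_n$. The strategy is to thicken each $X_n$ to a closed $2d$-dimensional Riemannian manifold $Z_n$ by taking the boundary of an ambient regular neighbourhood, and then to quantitatively compare the $1$-Uryson widths of $Z_n$ and its universal cover to those of $X_n$. Concretely, I would piecewise-linearly embed $X_n$ in $\mathbb{R}^{2d+1}$ (possible by the Menger--N\"obeling embedding theorem) and let $N_n \subset \mathbb{R}^{2d+1}$ be a small regular neighbourhood of $X_n$, so that $N_n$ is a $(2d+1)$-manifold with boundary that deformation retracts to $X_n$. Set $Z_n := \partial N_n$, giving $\dim Z_n = 2d$. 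The Riemannian metric on $N_n$, of a small ``thickness'' parameter $\delta > 0$ to be chosen later, would be built by patching together local product metrics on each simplex of $X_n$, in such a way that the inclusion $X_n \hookrightarrow N_n$ is an isometric embedding and the retraction $r_n : N_n \to X_n$ is $1$-Lipschitz with fibres of diameter at most $\delta$. For $d \geq 2$, the codimension $d + 1 \geq 3$ forces $\pi_1(Z_n) \cong \pi_1(N_n) \cong \pi_1(X_n)$; the case $d = 1$ is not needed for the application to Theorem~\ref{t:4dim}.

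The universal cover $\widetilde{Z_n}$ is then the boundary of $\widetilde{N_n}$, which thickens $\widetilde{X_n}$, and the restriction of the lifted retraction yields a projection $\widetilde{\pi_n} : \widetilde{Z_n} \to \widetilde{X_n}$ with fibres of diameter at most $2\delta$. Composing any low-width map $\widetilde{X_n} \to T$ to a $1$-complex $T$ with $\widetilde{\pi_n}$ gives a map $\widetilde{Z_n} \to T$ whose fibres grow by at most $2\delta$, so
\[
\UW_1(\widetilde{Z_n}) \leq \UW_1(\widetilde{X_n}) + 2\delta.
\]

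The matching lower bound $\UW_1(Z_n) \geq \UW_1(X_n) - O(\delta)$ is the crux of the argument and the main obstacle. A naive separator pull-back $W \mapsto \pi_n(W)$ along the sphere-bundle projection fails, because complementary components of $X_n \setminus \pi_n(W)$ can be much larger than those of $Z_n \setminus W$. Instead I would adapt the fibre-contraction / homological detection method of Balitskiy--Berdnikov~\cite[Corollary 2.3]{BB21} underlying Example~\ref{e:example}: a homology class in $X_n$ witnessing large $\UW_1(X_n)$ is transported, through the Thom isomorphism for the normal sphere bundle $Z_n \to X_n$, to a class in $Z_n$ that in turn forces $\UW_1(Z_n)$ to be comparably large. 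Making this quantitative, and controlling the Thom class near the lower-dimensional strata of $X_n$ where the bundle structure degenerates, is the delicate part.

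Combining the two bounds and rescaling each $X_n$ so that $\UW_1(\widetilde{X_n}) = 1$ (so that the hypothesis becomes $\UW_1(X_n) \to \infty$), we fix $\delta$ to be a universal constant independent of $n$. The upper bound gives a uniformly bounded $\UW_1(\widetilde{Z_n})$ and the lower bound gives $\UW_1(Z_n) \to \infty$, yielding the desired unbounded ratio for $\{Z_n\}$.
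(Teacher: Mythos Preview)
Your construction of $Z_n$ as the boundary of a regular neighbourhood in $\mathbb{R}^{2d+1}$, and your upper bound $\UW_1(\widetilde{Z_n}) \leq \UW_1(\widetilde{X_n}) + O(\delta)$ via composing with the lifted retraction, match the paper's approach closely. The paper does invest more care in the metric (citing a result of Ferry--Okun to produce a Riemannian metric on $\partial N$ with $|d(p(x),p(y)) - d(x,y)|$ small) and in the $\pi_1$-statement (it proves $\pi_1$-injectivity of $p: Z_n \to X_n$ from the simply-connected-preimage property rather than asserting an isomorphism), but these are refinements of what you sketched.

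The genuine gap is your lower bound $\UW_1(Z_n) \gtrsim \UW_1(X_n)$. You are right that the naive pull-back fails, but the Thom-class/homological route you propose does not apply here: large $\UW_1$ of an arbitrary Riemannian polyhedron is not witnessed by any homology class, so there is nothing to push through a Thom isomorphism. The Balitskiy--Berdnikov fibre-contraction argument needs a nontrivial map to an ambient manifold detected in homology; for a generic $X_n$ no such structure is available.

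The paper's device is quite different and worth knowing. Using the property that small open sets in $X_n$ have simply connected preimages in $Z_n$, one builds a ``section'' $g: X_n^{(2)} \to Z_n$ from the $2$-skeleton (equipped with the extrinsic metric) satisfying $d(g(x),g(y)) \geq d(x,y) - 3$: define $g$ on vertices by choosing preimages, extend over edges using path-connectedness of preimages, and extend over $2$-cells using simple connectivity of preimages. Now any map $Z_n \to \Gamma$ with small fibres composes with $g$ to give a map $X_n^{(2)} \to \Gamma$ with fibres only slightly larger, so $\UW_1(X_n^{(2)}) \leq \UW_1(Z_n) + O(1)$. A separate, easy lemma (extend a map $X_n^{(2)} \to \Gamma$ skeleton by skeleton) shows $\UW_1(X_n) \leq \UW_1(X_n^{(2)}) + 2\dim(X_n)$, and since $\dim(X_n)$ is bounded this closes the loop. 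The key idea you are missing is that only the $2$-skeleton matters for $\UW_1$, and a distance-controlled map from the $2$-skeleton into $Z_n$ can be built cell by cell.
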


To prove the proposition, we need the next two lemmas. The first lemma shows that for a given Riemannian polyhedron, there is  a closed Riemannian manifold which is very close to the Riemannian polyhedron in terms of both the geometry and topology.

\begin{lemma}\label{l:complex to mfld}
    Let $(K,d)$ be an $n$-dimensional compact Riemannian polyhedron and $n\geq 2$. Then for each $i\in \mathbb{N}$, there exists a closed Riemannian  $2n$-manifold $(M,d_i)$ and a surjective map $p:M\rightarrow K$ such that the following holds.
    \begin{enumerate}
\item If $A\subset K$ is connected, then $p^{-1}(A)$ is connected.    
        \item Every open cover of $K$ has a refinement $\{U_\alpha\}$ such that $p^{-1}(U_\alpha)$ is simply connected for each $\alpha$.
\item $|d(p(x),p(y))- d_i(x,y)|\leq \frac{1}{i}$ for all $x,y\in M$.         
        \item $p$ is $\pi_1$-injective.
    \end{enumerate}
\end{lemma}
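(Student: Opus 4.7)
The plan is to realize $M$ as the boundary of a thin regular neighborhood of $K$ inside an ambient $(2n+1)$-dimensional Riemannian manifold, following the standard polyhedral thickening construction. First I would fix a PL embedding $K \hookrightarrow W$ into a smooth Riemannian $(2n+1)$-manifold $W$, arranged so that the embedding is isometric -- or, for each fixed $i$, $(1+1/i)$-bi-Lipschitz -- with respect to the intrinsic metric $d$ of $K$. Such embeddings exist by combining the Menger--N\"obeling general-position theorem for PL embeddings into $\mathbb{R}^{2n+1}$ with a Nash-type metric adjustment: first isometrically embed $K$ into a high-dimensional Euclidean space, project generically onto a $(2n+1)$-dimensional subspace, and then modify the ambient Riemannian metric in a neighborhood of $K$ so that the intrinsic metric of $K$ is recovered. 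For each $i$, choose $\epsilon = \epsilon(i) > 0$ sufficiently small and let $N_\epsilon \subset W$ be the closed PL regular $\epsilon$-neighborhood of $K$, a compact $(2n+1)$-manifold with boundary equipped with a PL deformation retraction $r \co N_\epsilon \to K$. Define $M := \partial N_\epsilon$, a closed $2n$-manifold with the induced Riemannian metric $d_i$, and $p := r|_M \co M \to K$, which is surjective since the link of each $k \in K$ in $\partial N_\epsilon$ is nonempty.

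For condition (2), every $x \in M$ satisfies $d_W(x, p(x)) \leq \epsilon$, and rectifiable paths between points in $M$ can be projected to paths in $K$ (and, conversely, lifted by moving radially outward to $\partial N_\epsilon$) with length distortion of order $\epsilon$, so $|d_i(x, y) - d(p(x), p(y))| = O(\epsilon)$; choosing $\epsilon$ small enough gives the bound $1/i$. For condition (1), since the Riemannian polyhedron $K$ is locally simply connected, any open cover of $K$ admits a refinement $\{U_\alpha\}$ by simply connected open stars of a sufficiently fine triangulation. The preimage $p^{-1}(U_\alpha)$ deformation-retracts onto the fiber $p^{-1}(k)$ at any $k \in U_\alpha$; up to homotopy this fiber is the complement, inside the normal link sphere $S^{2n}$ of $k$ in $W$, of a regular neighborhood of the polyhedral link $\mathrm{lk}_K(k)$, which has dimension at most $n - 1$. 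Since $2n - (n-1) = n + 1 \geq 3$ whenever $n \geq 2$, general position (or Alexander duality) forces this complement to be simply connected, and hence $p^{-1}(U_\alpha)$ is simply connected.

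For condition (3), suppose $\gamma \subset M$ is a loop that is null-homotopic in $K$. Composing the null-homotopy with $r$ extends $\gamma$ to a continuous map $D^2 \to N_\epsilon$ with boundary $\gamma \subset M$. Since $\dim K + \dim D^2 = n + 2 < 2n + 1 = \dim N_\epsilon$ when $n \geq 2$, a general-position perturbation rel $\partial D^2$ produces a map $D^2 \to N_\epsilon$ whose image is disjoint from $K$. The complement $N_\epsilon \setminus K$ deformation-retracts onto $\partial N_\epsilon = M$ by pushing outward along the normal fibers of $r$, fixing $M$ pointwise; applying this retraction to the perturbed disk yields a null-homotopy of $\gamma$ within $M$, so $p$ is $\pi_1$-injective. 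The main obstacle is the metric control underlying the first step and condition (2): producing an isometric (or tightly bi-Lipschitz) PL embedding of $K$ into a $(2n+1)$-dimensional Riemannian manifold, with enough regularity that the tubular neighborhood has controlled extrinsic geometry and $\partial N_\epsilon$ inherits a metric faithfully approximating $d$ via $p$. The topological conditions (1) and (3) are comparatively robust, and both make essential use of the standing hypothesis $n \geq 2$ via simple connectivity of normal links and a general-position argument, respectively.
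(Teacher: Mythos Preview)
Your overall construction---PL-embed $K$ into $\mathbb{R}^{2n+1}$, take a regular neighborhood $N$, and set $M=\partial N$ with $p=r|_{\partial N}$---matches the paper's exactly, and your treatment of condition~(1) is the same link/codimension argument the paper gives (though your claim that $p^{-1}(U_\alpha)$ deformation-retracts to a single fiber is not quite right, since $p$ is not a fibration; the paper more carefully identifies the homotopy type of $p^{-1}(U)$ for $U$ a small neighborhood of a point in the interior of a $k$-simplex as a $(2n-k)$-sphere with an $(n-k-1)$-complex deleted).

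For condition~(3) you take a genuinely different route. The paper deduces $\pi_1$-injectivity \emph{from} properties~(1) and~(2): given a null-homotopy $g\colon D^2\to K$ of $p\circ f$, it builds a lift $g'\colon D^2\to M$ skeleton by skeleton, using connectedness of preimages to lift edges and simple connectivity of preimages to fill in $2$-simplices. Your general-position argument (push the disk off $K$ inside $N_\epsilon$ using $\dim K+2<2n+1$, then retract $N_\epsilon\setminus K$ onto $\partial N_\epsilon$) is a legitimate alternative and is arguably more direct; it uses $n\ge 2$ via the dimension count rather than via simple connectivity of links.

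The real gap is condition~(2), which you correctly flag as the main obstacle. Your plan---arrange an isometric or $(1+1/i)$-bi-Lipschitz PL embedding of $K$ into a $(2n+1)$-manifold and then argue that the \emph{induced} Riemannian metric on $\partial N_\epsilon$ approximates $d$ via $p$---is substantially harder than it looks: neither the retraction $r$ nor a ``radial lift'' of a $K$-geodesic to $\partial N_\epsilon$ has controlled length distortion in general, so the estimate $|d_i(x,y)-d(p(x),p(y))|=O(\epsilon)$ is not justified as stated. The paper bypasses all of this. It does not attempt to control the ambient geometry at all; instead, once it knows that $p\colon\partial N\to K$ has connected point-inverses and $K$ is a geodesic space, it invokes a theorem of Ferry--Okun to equip $\partial N$ with a Riemannian metric $d_i$ satisfying $|d(p(x),p(y))-d_i(x,y)|\le 1/i$ directly. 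So the metric on $M$ is produced by a black-box approximation result, not inherited from the embedding, and no isometric embedding of $K$ is ever needed.
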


\begin{proof}
    Choose a PL embedding of $K$ into $\mathbb{R}^{2n+1}$ and then take a regular neighborhood $N$ of $K$ in $\mathbb{R}^{2n+1}$. 
    Such neighborhood exists and moreover there is a projection map $N\rightarrow K$ which is a homotopy equivalence and  whose restriction $p:\partial N\rightarrow K$ satisfies the following property:
    If $x$ is a point in the interior of a $k$-simplex in $K$, then $p^{-1}(x)$ has a homotopy type of a $(2n-k)$-sphere with some $(n-k-1)$-complex removed where this $(n-k-1)$-complex is the link of $\sigma^k$ in $K$.    
    Since $n\geq 2$, the removed complex has codimension at least $3$ in the sphere, so these preimages are simply connected. We let $M:=\partial N$.

 To prove property (1), we observe that $p$ is a closed map because $M$ is compact. Since $p$ is a surjective, closed map and each fiber of $p$ is connected, it follows that inverse image of any connected subset under $p$ is also connected.
 
    To prove property (2), we first observe that for each $x\in K$, there exists a small enough neighborhood $U$ of $x$ such that $p^{-1}(U)$ has a homotopy type of $p^{-1}(x)$. 
    The claim follows because $p^{-1}(x)$ is simply connected.

   Since $K$ is a geodesic space, we can invoke the main result of~\cite{FO} to endow $\partial N$ with a Riemannian metric with the property (3).
   Let $M$ denote the manifold $\partial N$ endowed with this Riemannian metric.
   
 To prove property (4), suppose $f:S^1\rightarrow M$ is a loop such that $p\circ f$ is nullhomotopic in $K$. 
 We want to show that $f$ is nullhomotopic in $M$. 
 Equivalently, we suppose that $f$ is null-homotopic in $N$ (which is homotopy equivalent to $K$ by an extension of the map $p:M\to K$ to $N$), and we want to show that it is still null-homotopic in $N \setminus K$ (which is homotopy equivalent to $M$).  This can be achieved by perturbing the null-homotopy disk while fixing its boundary, so that this disk becomes transverse to $K$.  Once the disk is transverse to $K$, it is disjoint from $K$, because the codimension of $K$ in $N$ is greater than $2$.
    
\end{proof}

The next lemma says that when there is a map $p:M\rightarrow K$ as in Lemma~\ref{l:complex to mfld}, one can construct a map $K^{(2)}\rightarrow M$ that does not collapse big set into small set. This will be useful for us to conclude that $M$ has large 1-width when $K$ has large 1-width.

\begin{lemma}\label{l:inverse map}
    Let $f:X\rightarrow Y$ be a  surjective map between two compact Riemannian polyhedra such that 
    \begin{enumerate}
        \item every open cover of $Y$ has a refinement $\{U_\alpha\}$ such that $f^{-1}(U_\alpha)$ is simply connected for each $\alpha$.    
        \item $d(f(x),f(y))\leq d(x,y)+1$ for all $x,y\in X$.        
\end{enumerate}
    Then there exists a map $g:Y^{(2)}\rightarrow X$ such that $d(g(x),g(y))\geq d(x,y)-3$ for all $x,y\in Y^{(2)}$.
\end{lemma}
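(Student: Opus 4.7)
The plan is to build $g$ by an inductive skeleton-by-skeleton extension over a sufficiently fine triangulation of $Y^{(2)}$, mirroring the scheme used at the end of the proof of Lemma~\ref{l:complex to mfld}. Once the construction satisfies $d(f(g(x)), x) \leq 1$ for every $x \in Y^{(2)}$, the desired bound is immediate from the triangle inequality and hypothesis (2):
\[
d(x,y) \;\leq\; d(x, f(g(x))) + d(f(g(x)), f(g(y))) + d(f(g(y)), y) \;\leq\; 1 + \bigl(d(g(x), g(y)) + 1\bigr) + 1.
\]

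For the setup, I would start from an open cover of $Y$ by balls of radius $\tfrac14$, apply hypothesis (1) to obtain a refinement $\{U_\alpha\}$ with $f^{-1}(U_\alpha)$ simply connected, and let $2k$ be a Lebesgue number of this cover (with $k$ small, say $k \leq \tfrac14$). Applying hypothesis (1) a second time to a cover by sets of diameter much smaller than $k$ gives a further refinement $\{V_\beta\}$ whose preimages are path-connected (automatic from simple connectedness). Finally, fix a triangulation $T$ of $Y^{(2)}$ so fine that every closed $2$-simplex of $T$ lies in some $V_\beta$.

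Now define $g$ inductively. On vertices, pick $g(v) \in f^{-1}(v)$ arbitrarily. On an edge $[v,w] \subset V_\beta$, extend $g$ to be a path in the path-connected set $f^{-1}(V_\beta)$ from $g(v)$ to $g(w)$. For a $2$-simplex $\sigma \subset V_\beta$, each of its three edges is sent by $f \circ g$ into some $V_{\beta_i}$, and since the vertices of $\sigma$ are mutually within $\diam V_\beta$, the set $(f\circ g)(\partial \sigma)$ has diameter less than $2k$; by the Lebesgue property it lies inside a single $U_\alpha$, so $g(\partial \sigma) \subset f^{-1}(U_\alpha)$, and simple connectedness lets me extend $g$ over $\sigma$ into $f^{-1}(U_\alpha)$.

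For the distance estimate, any $x \in Y^{(2)}$ lies in a closed $2$-simplex $\sigma$ of $T$; then $f(g(\sigma)) \subset U_\alpha$ with $\diam U_\alpha \leq \tfrac12$, and picking a vertex $v$ of $\sigma$ gives $f(g(v)) = v$ and $d(x,v) \leq \diam V_\beta \leq k$, whence $d(f(g(x)), x) \leq \tfrac12 + k \leq 1$. The main obstacle is purely the bookkeeping of scales: the radii of $\{U_\alpha\}$, the sizes of $\{V_\beta\}$, and the mesh of $T$ must all be balanced so that every inductive extension occurs inside the required simply connected or path-connected preimage, while the $f\circ g$-image of any $2$-simplex boundary stays below the Lebesgue number. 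The topology is entirely packaged into hypothesis (1), and the metric accounting is routine once these scales are fixed.
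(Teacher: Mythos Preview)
Your proposal is correct and essentially identical to the paper's proof: the same two-cover setup (simply connected preimages for $\{U_\alpha\}$, path-connected for the finer $\{V_\beta\}$), the same skeleton-by-skeleton extension of $g$, and the same estimate $d(f\circ g(x),x)\leq 1$ followed by the triangle inequality. The only cosmetic difference is that the paper arranges $\sigma\cup (f\circ g)(\partial\sigma)\subset U_\alpha$ directly (taking $\diam U_\alpha\leq 1$), whereas you route the final bound through a vertex of $\sigma$; both give the same conclusion.
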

\begin{proof}
By hypothesis, we can choose an open cover $U_\alpha$ of $Y$ such that $\diam(U_\alpha)\leq 1$ and $f^{-1}(U_\alpha)$ is simply connected for each $\alpha$.
  Let $2k$ be the Lebesgue number of the cover.
By hypothesis, we can choose another cover $\{V_\beta\}$ of $Y$ such that $\diam(V_\beta)\leq k$ and $f^{-1}(V_\beta)$ is path connected for each $\beta$.
 Take a fine triangulation of $Y$ such that any simplex in $Y$ is supported in some $V_\beta$.
 Now, first define $g$ on each vertex $x$ such that $f\circ g(x)=x$. Then define $g$ on an edge $[v,w]$ by first choosing a $V_\beta$ such that $[v,w]\subset V_\beta$, and then sending $[v,w]$ to a path connecting $g(v)$ and $g(w)$ in $f^{-1}(V_\beta)$. 
Since $\diam(V_\beta)\leq k$ for all $\beta$ and $f\circ g$ of any edge in $Y$ lives in some $V_\beta$,  we have that for any $2$-simplex $\sigma$, $f\circ g(\partial \sigma)$ is supported in a set of diameter at most $2k$.  
In fact, $\sigma \cup f \circ g(\partial \sigma)$ also has diameter at most $2k$.
Hence there exist a $U_\alpha$ such that $\sigma \cup f\circ g(\partial \sigma)\subset U_\alpha$.
Since $f^{-1}(U_\alpha)$ is simply connected, we can extend the map $g:\partial \sigma\rightarrow f^{-1}(U_\alpha)$ to the whole $2$-simplex $\sigma$.
In this way, we get a map $g:Y^{(2)}\rightarrow X$ with the property that, $f\circ g(x)=x$ for each vertex in $Y^{(2)}$ and $f\circ g(\sigma)$ is supported in some $U_\alpha$ for each simplex $\sigma$ in $Y^{(2)}$.  
Since  $U_\alpha$ contains $\sigma$ as well as $f \circ g(\partial \sigma)$ and $\diam(U_\alpha)\leq 1$, it follows that $d(f\circ g(x),x)\leq 1$ for any $x\in Y^{(2)}$.  
Therefore for any $x,y\in Y^{(2)}$, we have the following
\begin{align*}
    d(g(x),g(y))& \geq d(f\circ g(x),f\circ g(y))-1\\
    & \geq d(x,y)-d(x,f\circ g(x))-d(f\circ g(y),y)-1\\
    &\geq d(x,y)-3
    \end{align*}
where the first inequality follows because $f$ changes the distance by at most 1 and the second inequality is triangle inequality.
\end{proof}
 Now we are ready to prove Proposition~\ref{p:mfld}.
\begin{proof}[Proof of Propostion~\ref{p:mfld}]
After rescaling the metric of $X_n$, we can assume that $\{\UW_1(X_n)\}$ is unbounded and $\{\UW_1(\widetilde{X_n})\}$ is bounded. Furthermore, after subdividing, we can assume that the diameter of each simplex in $X_n$ is at most 1.
Pick $r_n>0$ such that the covering map $\widetilde{X_n}\rightarrow X_n$ restricts to isometry on any set that has diameter at most  $2r_n$.
In particular, any loop in $X_n$ of diameter at most $2 r_n$ is nullhomotopic.
Such $r_n$ exists because $X_n$ is  compact and without loss of generality we can assume that  $r_n\leq 1$.
Applying  Lemma~\ref{l:complex to mfld}, we pick a closed manifold $Z_n$ and a $\pi_1$-injective map $p_n:Z_n\rightarrow X_n$ such that $|d(p_n(x),p_n(y))- d(x,y)|\leq \frac{r_n}{4}$ for any $x,y\in Z_n$. 

 First we prove that $\{\UW_1(Z_n)\}$ is unbounded. Let $L_n:=\UW_1(Z_n)$ and  $\gamma_n:Z_n\rightarrow \Gamma_n$ be a map to the graph $\Gamma_n$ such that each fiber has diameter at most $L_n+1$.
By Lemma~\ref{l:inverse map}, there exists a map $g_n:X_n^{(2)}\rightarrow Z_n$ such that for any $A\subset Z_n$, we have $\diam(g_n^{-1}(A))\leq \diam(A)+3$. 
It follows that, $\UW_1(X_n^{(2)})\leq L_n+1+3$.
By Lemma~\ref{l:width of subcomplex} we obtain that $\UW_1(X_n)\leq L_n+4+2\dim(X_n)$.
Since $\{\dim(X_n)\}$ is bounded and $\{\UW_1(X_n)\}$ is unbounded, we have $\{L_n\}$ is unbounded.

    Next we prove that the sequence $\{\UW_1(\widetilde{Z_n})\}$ is bounded.
    Let $\widetilde{p_n}:\widetilde{Z_n}\to \widetilde{X_n}$ denote the lift of $p_n:Z_n\to X_n$ to the universal covers.
    We first show that it is enough to prove that  $d(a,b)\leq \frac{r_n}{2}$ if $d(\widetilde{p_n}(a),\widetilde{p_n}(b))\leq \frac{r_n}{4}$ for any $a,b\in \widetilde{Z_n}$.
    Assuming this is true, pick any two points $a,b\in \widetilde{Z_n}$.
    Let $\gamma$ be a length-minimizing geodesic between $\widetilde{p_n}(a)$ and $\widetilde{p_n}(b)$. 
    Divide $\gamma$ into $k_n:=\lceil\frac{4\cdot d(\widetilde{p_n}(a),\widetilde{p_n}(b))}{r_n}\rceil$ many consecutive sub-geodesics $[c_i,c_{i+1}]$ each having length at most  $\frac{r_n}{4}$ where $c_0=\widetilde{p_n}(a)$ and $c_{k_n}=\widetilde{p_n}(b)$.
    Let $c'_i\in \widetilde{p_n}^{-1}(c_i)$.
Since $d(c_i,c_{i+1})\leq \frac{r_n}{4}$, by our assumption $d(c'_i,c'_{i+1})\leq \frac{r_n}{2}$.
    We obtain
    \begin{align*}
        d(a,b)\leq \sum_{i=0}^{k_n-1} d(c'_i,c'_{i+1})&\leq k_n\cdot \frac{r_n}{2}\\
        &\leq \left(\frac{4 \cdot d(\widetilde{p_n}(a),\widetilde{p_n}(b))}{r_n}+1\right)\cdot \frac{r_n}{2}\\
        &\leq 2d(\widetilde{p_n}(a),\widetilde{p_n}(b))+1.        
    \end{align*}    
It follows that $\UW_1(\widetilde{Z_n})\leq 2\cdot \UW_1(\widetilde{X_n})+1$ and therefore $\{\UW_1(\widetilde{Z_n})\}$ is bounded since $\{\UW_1(\widetilde{X_n})\}$ is bounded.  
  Therefore,  it remains to prove that  $d(a,b)\leq \frac{r_n}{2}$ if $d(\widetilde{p_n}(a),\widetilde{p_n}(b))\leq \frac{r_n}{4}$ for any $a,b\in \widetilde{Z_n}$.

Let $q_n:\widetilde{Z_n}\rightarrow Z_n$ and $q_n':\widetilde{X_n}\rightarrow X_n$ be the covering maps. The following commutative diagram will be useful to follow the proof.
\[
\begin{tikzcd}
    \widetilde{Z_n}\arrow{d}{q_n}\arrow{r}{\widetilde{p_n}} & \widetilde{X_n}\arrow{d}{q'_n}\\
    Z_n\arrow{r}{p_n} & X_n
\end{tikzcd}
\]

First we claim that $q_n$ restricted to any set of diameter $\leq \frac{r_n}{2}$ is  an isometry, i.e., $d(q_n(a),q_n(b))=d(a,b)$ if $d(a,b)\leq \frac{r_n}{2}$. 
   If not, then a length-minimizing geodesic between $q_n(a)$ and $q_n(b)$ in $Z_n$ and the $q_n$-image of a length-minimizing geodesic between $a$ and $b$ form a homotopically nontrivial loop $c$ of diameter at most $r_n$.
    Since $p_n$ is $\pi_1$-injective and changes distance by at most $\frac{r_n}{4}$, $p_n(c)$ is a homotopically nontrivial loop of diameter at most $2 r_n$.
    This is a contradiction to the assumption that any loop of diameter at most $2r_n$ is nullhomotopic in $X_n$.
   Therefore $d(q_n(a),q_n(b))=d(a,b)$ if $d(a,b)\leq \frac{r_n}{2}$.

  Finally we are ready to show that $d(a,b)\leq \frac{r_n}{2}$ if $d(\widetilde{p_n}(a),\widetilde{p_n}(b))\leq \frac{r_n}{4}$ for any $a,b\in \widetilde{Z_n}$.  We take $a,b\in \widetilde{Z_n}$ so that $d(\widetilde{p_n}(a),\widetilde{p_n}(b))\leq \frac{r_n}{4}$. 
   Take a length-minimizing geodesic $\gamma$ between $\widetilde{p_n}(a)$ and $\widetilde{p_n}(b)$.
    Note that $\widetilde{p_n}^{-1}(\gamma)$ is contained in $q_n^{-1}p_n^{-1}(q_n'(\gamma))$ by the commutativity of the above diagram.
    Since the diameter of $\gamma$ is at most $\frac{r_n}{4}$, $q'_n(\gamma)$ is isometric to $\gamma$.
    Since $q'_n(\gamma)$ is connected, applying Lemma~\ref{l:complex to mfld}(1) to $p_n$, we have that $p_n^{-1}(q_n'(\gamma))$ is a connected set.
   Since the diameter of $q_n'(\gamma)$ is at most $\frac{r_n}{4}$ and $p_n$ changes distance by at most $\frac{r_n}{4}$, we have that $p_n^{-1}(q_n'(\gamma))$ is a connected set of diameter at most $\frac{r_n}{2}$.
   Since $q_n$ restricted to a set of diameter $\leq \frac{r_n}{2}$ is an isometry by our previous claim and $q_n$ is a covering map, it follows that $q_n^{-1}p_n^{-1}(q_n'(\gamma))$ is disjoint union of sets each of which are isometric to $p_n^{-1}(q_n'(\gamma))$.
   Only one of these components is contained in  $\widetilde{p_n}^{-1}(\gamma)$ because $p_n$ is $\pi_1$-injective.
   Therefore the diameter of $\widetilde{p_n}^{-1}(\gamma)$ is at most $\frac{r_n}{2}$ and in particular $d(a,b)\leq \frac{r_n}{2}$.
    This completes the proof that $\{\UW_1(\widetilde{Z_n)}\}$ is bounded.
\end{proof}

Next our goal is to improve the output $\{Z_n\}$ of the  Proposition~\ref{p:mfld} so that each $Z_n$ has dimension four given that the sequence $\{\dim(X_n)\}$ is bounded.
The key to achieve this is the following.

\begin{lemma}\label{l:width of subcomplex}
Let $X$ be a Riemannian manifold and  $Y$ be the $2$-skeleton of some cubulation of $X$ with the extrinsic metric. Then the following holds.
    \begin{enumerate}
        \item $\UW_1(\widetilde{X})\geq \UW_1(\widetilde{Y})$.
        \item If the cubes of the cubulation of $X$ are of diameter at most $k$, then $\UW_1(X)\leq \UW_1(Y)+2k\cdot \dim(X)$.
    \end{enumerate}
    In particular, if $\{X_n\}$ is a sequence of Riemannian polyhedra such that  the diameter of each simplex in $X_n$ is uniformly bounded, $\{\dim(X_n)\}$ is bounded, and the sequence $\displaystyle\frac{\UW_1(X_n)}{\UW_1(\widetilde{X_n})}$ is unbounded.
    Then $\displaystyle\frac{\UW_1(X^{(2)}_n)}{\UW_1(\widetilde{X^{(2)}_n})}$ is unbounded where $X^{(2)}_n$ is equipped with the extrinsic metric.    
\end{lemma}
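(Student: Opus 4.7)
The plan is to handle the two inequalities separately using quite different techniques, and then combine them for the "in particular" assertion.

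For part (1), I would rely on the standard fact that the inclusion $Y = X^{(2)} \hookrightarrow X$ induces an isomorphism on fundamental groups (cellular approximation kills higher cells). Consequently, the universal cover $\widetilde{Y}$ can be identified with the preimage $p^{-1}(Y)\subseteq \widetilde{X}$ under the covering projection $p\co \widetilde{X}\to X$. Since the covering map is a local isometry, the extrinsic metric on $\widetilde{Y}$ agrees with the restriction of the metric on $\widetilde{X}$. Now given any continuous map $\phi\co \widetilde{X}\to T$ into a $1$-complex with fibers of diameter at most $D$, the restriction $\phi|_{\widetilde{Y}}$ has fibers contained in those of $\phi$, hence of diameter at most $D$ in the ambient metric (which equals the extrinsic metric). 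Taking the infimum over $\phi$ gives $\UW_1(\widetilde{Y})\leq \UW_1(\widetilde{X})$.

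For part (2), I would induct on the dimension of the skeleton, proving $\UW_1(X^{(m)})\leq \UW_1(X^{(m-1)})+2k$ for each $m\geq 3$ and iterating from $m=3$ up to $m=\dim(X)$. For the inductive step, suppose $\phi\co X^{(m-1)}\to T$ has fibers of diameter at most $D$. For each $m$-simplex $\sigma$, its boundary $\partial\sigma\cong S^{m-1}$ is simply connected (since $m\geq 3$), so $\phi|_{\partial\sigma}$ lifts to a map $\widetilde{\phi}_\sigma\co \partial\sigma\to \widetilde{T}$ into the universal cover $\widetilde{T}$, which is a tree. The image $\widetilde{\phi}_\sigma(\partial\sigma)$ is compact and connected, hence (connected subsets of a tree are convex) equals a subtree $S_\sigma$. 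Pick $\widetilde{y}_\sigma\in S_\sigma$, parametrize $\sigma$ as the cone $\partial\sigma * b_\sigma$, and define the extension by sending $(x,s)$ to the point at parameter $s$ along the unique geodesic in $\widetilde{T}$ from $\widetilde{\phi}_\sigma(x)$ to $\widetilde{y}_\sigma$. Since geodesics in a tree vary continuously with their endpoints, this is continuous; since every such geodesic stays inside $S_\sigma$, the resulting map $\Phi|_\sigma\co \sigma\to T$ (obtained by projecting back to $T$) has image in $\phi(\partial\sigma)$.

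The main obstacle — and the reason for passing to the tree $\widetilde{T}$ — is controlling the fibers of the extension. With the image of $\Phi|_\sigma$ contained in $\phi(\partial\sigma)$, the analysis becomes clean. For any $y\in T$, if $\Phi|_\sigma^{-1}(y)\neq\emptyset$ then $y\in\phi(\partial\sigma)$, so we may select $x_0\in\phi^{-1}(y)\cap\partial\sigma$; then every point of $\Phi|_\sigma^{-1}(y)$ lies in $\sigma$, hence within $\mathrm{diam}(\sigma)\leq k$ of $x_0$. Thus every point of $\Phi^{-1}(y)$ lies within $k$ of $\phi^{-1}(y)\subseteq X^{(m-1)}$, whose diameter is at most $D$, yielding $\mathrm{diam}\,\Phi^{-1}(y)\leq D+2k$. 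Iterating gives $\UW_1(X)\leq \UW_1(Y)+2k(\dim(X)-2)\leq \UW_1(Y)+2k\cdot\dim(X)$.

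Finally, for the "in particular" conclusion, let $k$ bound simplex diameters and $N$ bound $\dim(X_n)$. Combining the two parts gives
\[
\frac{\UW_1(X_n^{(2)})}{\UW_1(\widetilde{X_n^{(2)}})}\ \geq\ \frac{\UW_1(X_n)-2kN}{\UW_1(\widetilde{X_n})}.
\]
Pass to a subsequence along which $\UW_1(X_n)/\UW_1(\widetilde{X_n})\to\infty$; by rescaling the metric on each $X_n$ (which is scale invariant on the ratio) we may arrange $\UW_1(\widetilde{X_n})$ to stay bounded, forcing $\UW_1(X_n)\to\infty$. The additive $2kN$ is then negligible, so the right-hand side diverges, establishing the unboundedness of the ratio for the $2$-skeleta.
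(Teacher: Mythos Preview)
Your proofs of (1) and (2) are correct and follow the paper's approach. For (1), both use that $Y\hookrightarrow X$ is a $\pi_1$-isomorphism, identify $\widetilde Y$ with an isometrically embedded subset of $\widetilde X$, and restrict a small-fiber map. For (2), both extend a map $Y\to\Gamma$ skeleton by skeleton so that each simplex $\sigma$ maps into $\phi(\partial\sigma)$, gaining at most $2k$ in fiber diameter per step; the paper just invokes $\pi_n(f(\partial\sigma))=0$ for $n\geq 2$, while your tree-geodesic construction makes the extension explicit, but the content is the same.

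There is a slip in your ``in particular'' deduction: rescaling $X_n$ does not preserve the uniform simplex-diameter bound $k$, so after rescaling the additive constant $2kN$ in your displayed inequality is no longer the correct one, and the step ``the additive $2kN$ is then negligible'' is unjustified. Without rescaling one has
\[
\frac{\UW_1(X_n^{(2)})}{\UW_1(\widetilde{X_n^{(2)}})}\ \geq\ \frac{\UW_1(X_n)}{\UW_1(\widetilde{X_n})}-\frac{2kN}{\UW_1(\widetilde{X_n})},
\]
which gives the conclusion immediately whenever $\UW_1(\widetilde{X_n})$ stays bounded away from $0$ along the diverging subsequence. The paper does not spell out the residual case $\UW_1(\widetilde{X_n})\to 0$ either; in its actual applications it sidesteps the issue by rescaling \emph{first} and only then choosing a triangulation with uniformly small simplices.
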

\begin{proof}
    To prove the first claim, we observe that the isometry $Y\hookrightarrow X$ induces isomorphism between the corresponding fundamental groups. 
Hence $Y\hookrightarrow X$ lifts to an isometric embedding $\widetilde{Y}\rightarrow \widetilde{X}$. It follows that $\UW_1(\widetilde{X})\geq \UW_1(\widetilde{Y})$.

To prove the second claim, choose $d> \UW_1(Y)$ arbitrarily.
    Then there exists a graph $\Gamma$ and a map $f:Y\rightarrow \Gamma$ such that the diameters of fibers of $f$ are at most $d$.
    Now, we will  extend $f$  to $X$ inductively on each skeleton of $X$.
    Suppose, we already have extended  $f$ to the $n$-skeleton of $X$ where $n\geq 2$.
    Abusing notation, we will denote the extension by $f$ as well.
     Since $\Gamma$ is a graph, for   any $(n+1)$-cube $\sigma$ in $X$, $\pi_n(f(\partial \sigma))$ is trivial. Therefore, we can extend $f$ to the $(n+1)$-skeleton such that $f(\sigma)\subset f(\partial\sigma)$ for each $(n+1)$-cube.
    Since cubes in $X$ have diameters at most $k$, this extension increases the diameters of fibers by at most $2k$ amount. 
     This way, in each inductive step the diameters of the fibers get increased by at most $2k$ amount, and hence the fibers of the final map are of diameter at most $d+2k \cdot \dim(X)$.
     Therefore, $\UW_1(X)\leq d+2k \cdot \dim(X)$.
     Since $d> \UW_1(Y)$ was chosen arbitrarily, we obtain the desired claim.
\end{proof}

Note that the $X^{(2)}_n$ in Lemma~\ref{l:width of subcomplex} may not be a Riemannian polyhedron because it is equipped with the extrinsic metric.
In the next proposition, we improve Lemma~\ref{l:width of subcomplex} by producing a sequence of Riemannian polyhedra with the same property.
This is accomplished by replacing the extrinsic metric of $X^{(2)}_n$ by its intrinsic metric.
However, we must ensure that the extrinsic metric and the intrinsic metric are close to one another so that the  ratio $\displaystyle\frac{\UW_1(X^{(2)}_n)}{\UW_1(\widetilde{X^{(2)}_n})}$ remains unbounded with the intrinsic metric. 
For this we will use the Federer--Fleming deformation technique,  
which allows us to project a geodesic in $\widetilde{X}_{n}$ onto its lower skeleta
with only controlled distortion.  
More precisely, we require the following lemma from~\cite[Lemma 5]{G}.

\begin{lemma}[Pushing a cycle to the boundary of a cube]\label{l:pushing cycle}
Suppose that $z$ is a relative $n$-cycle inside a $k$-dimensional cube $Q$ with equal side lengths, with $k > n$. Then there exists an $n$-chain $\tilde{z} \subset \partial Q$ such that  $\partial \tilde{z} = \partial z$ and
$\operatorname{vol}(\tilde{z}) \leq C_k \, \operatorname{vol}(z)$.
\end{lemma}

\begin{proposition}[$2$-dimensional reduction]\label{p:2dim}
Suppose that $\{X_n\}_{n=1}^\infty$ is a sequence of compact Riemannian polyhedra such that $\{\dim(X_n)\}$ is bounded and the ratio $\displaystyle\frac{\UW_1(X_n)}{\UW_1(\widetilde{X_n})}$ is unbounded.
Then there is a related sequence $\{Y_n\}_{n=1}^\infty$ of compact 2-dimensional Riemannian polyhedra such that the ratio $\displaystyle\frac{\UW_1(Y_n)}{\UW_1(\widetilde{Y_n})}$ is unbounded.
\end{proposition}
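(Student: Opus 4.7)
The strategy is to take $Y_n$ to be the $2$-skeleton $X_n^{(2)}$ of a suitably fine triangulation of $X_n$, but equipped with its intrinsic Riemannian polyhedron metric $d_{\mathrm{int}}$ rather than the extrinsic metric $d_{\mathrm{ext}}$ inherited from $X_n$. Lemma~\ref{l:width of subcomplex} already gives the unboundedness of the ratio for $(X_n^{(2)}, d_{\mathrm{ext}})$; the task is to choose triangulations so that the intrinsic and extrinsic metrics on the $2$-skeleton are uniformly bi-Lipschitz equivalent, and then transfer the conclusion.

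After rescaling each $X_n$, we may assume $\UW_1(\widetilde{X_n})$ is bounded while $\UW_1(X_n)\to\infty$. First I would choose triangulations of $X_n$ whose simplices have uniformly bounded diameter; Lemma~\ref{l:width of subcomplex} then yields that $\UW_1(X_n^{(2)}, d_{\mathrm{ext}})\to\infty$ while $\UW_1(\widetilde{X_n^{(2)}}, d_{\mathrm{ext}})$ stays bounded. Next I would refine the triangulations further so that each simplex is of \emph{bounded shape}, i.e.\ bi-Lipschitz close, with constant approaching $1$ uniformly in $n$, to a standard Euclidean simplex with inscribed-to-circumscribed-radius ratio bounded below. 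This can be arranged by iterating barycentric subdivision of a smooth triangulation, since at small scales a Riemannian metric is essentially Euclidean.

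With such a triangulation, for a fat Euclidean $d$-simplex $\sigma$ the $2$-skeleton $\partial_2\sigma$ with its intrinsic path-metric is bi-Lipschitz equivalent to $\sigma$ with the restricted Euclidean metric, with a constant $C_d$ depending only on $d$: any two points of $\partial_2\sigma$ lying in $2$-faces that share a vertex can be connected by a path through that vertex whose length is comparable to their extrinsic distance, while any two points in $2$-faces sharing no vertex are already at extrinsically large distance, controlled by the shape of $\sigma$. Assembling these local comparisons, any path in $X_n$ between points of $X_n^{(2)}$ can be rerouted through $X_n^{(2)}$, simplex by simplex, with length multiplied by at most a constant $C$ depending only on the uniform bound on $\dim(X_n)$. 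Thus $d_{\mathrm{int}}\leq C\cdot d_{\mathrm{ext}}$ on $X_n^{(2)}$, and the reverse inequality is automatic.

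A uniform bi-Lipschitz change of metric alters $\UW_1$ and $\UW_1$ of the universal cover by at most the same multiplicative constant, so the ratio $\UW_1(Y_n)/\UW_1(\widetilde{Y_n})$ remains unbounded for $Y_n := (X_n^{(2)}, d_{\mathrm{int}})$, which are compact $2$-dimensional Riemannian polyhedra as required. The main obstacle is the bi-Lipschitz comparison on a single simplex---in particular, verifying that the $2$-skeleton of a fat $d$-simplex is bi-Lipschitz equivalent to the full simplex with constant depending only on $d$, and ensuring that a shape-bounded triangulation of each $X_n$ can be constructed with constants that are uniform in $n$. Both are where the ``appropriate choice of triangulation'' alluded to in the excerpt plays its role.
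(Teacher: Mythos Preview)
Your overall strategy---pass to the $2$-skeleton with its intrinsic metric and show it is uniformly comparable to the extrinsic one---matches the paper's. But there is a genuine gap in your proposed mechanism: iterated barycentric subdivision does \emph{not} produce simplices of bounded shape. In dimension $\geq 2$ the aspect ratios of the sub-simplices under repeated barycentric subdivision are unbounded (the shapes become dense in moduli space; see e.g.\ B\'ar\'any--Beardon--Carne, \emph{Mathematika} 1996). So your route to a fat triangulation fails, and with it the uniform-in-$n$ bi-Lipschitz comparison on which the rest of the argument hinges. There is a second, smaller issue with the ``simplex by simplex'' rerouting: the entry and exit points of a geodesic through a top-dimensional simplex lie in its $(d-1)$-skeleton, not its $2$-skeleton, so the local comparison on a single simplex does not chain directly.

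The paper sidesteps both difficulties. It first applies Proposition~\ref{p:mfld} to replace the $X_n$ by closed Riemannian manifolds, and then invokes Bowditch's cubulation theorem, which furnishes a cubulation by standard Euclidean cubes up to a bi-Lipschitz constant depending only on the dimension---this is the external input playing the role of your ``fat triangulation''. Even so, the paper does not obtain a clean multiplicative comparison; it proves an estimate $d_{Y_n}(a,b)\leq \sqrt{\dim X_n}\,(d_{X_n}(a,b)+2\epsilon M_n)$ with an additive term involving the total number $M_n$ of cubes, controlled by choosing the subdivision scale $\epsilon$ small enough for each $n$. A key step is that geodesics in the cubulated $X_n$ pass through each cube at most once, proved by embedding the complex into a single large cube and using coordinate projections; this is what makes the accounting over cube-crossings finite and is the analogue of the step your rerouting argument is missing.
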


\begin{proof}
    Let $\{X_n\}_{n=1}^\infty$ be a sequence of compact Riemannian polyhedra as in the hypothesis.
    By Proposition~\ref{p:mfld}, we can assume that each $X_n$ is a closed Riemannian manifold satisfying the same hypotheses.
    Furthermore, after scaling we can assume that $\{\UW_1(X_n)\}$ is unbounded and $\{\UW_1(\widetilde{X_n})\}$ is bounded.

By \cite[Theorem 1.1]{Bowditch}, any smooth closed Riemannian manifold admits a smooth cubulation such that if we equip each cube with the metric of a standard Euclidean cube (up to some scaling), then the resulting path metric on the manifold induced from the cubulation differs from the original metric by at most some multiplicative constant $L$ that depends only on the dimension of the manifold (also see \cite{BoDG}).
Applying this to each $X_n$, we can assume that each $X_n$ is equipped with a cubulation and a path metric where each cube is a standard Euclidean cube (up to some scaling).
Since $\{\dim(X_n)\}$ is bounded, this path metric differs from the original metric by a multiplicative constant $L$ for all $X_n$.
Thus for the rest of the proof we can assume that $X_n$ is equipped with the path metric coming from the cubulation.

Let $Y^e_n$ and $Y_n$ denote the 2-skeleton $X_n^{(2)}$ of $X_n$ equipped with the extrinsic and intrinsic metric respectively. By Lemma~\ref{l:width of subcomplex}, $\UW_1(\widetilde{X_n})\geq \UW_1(\widetilde{Y^e_n})$ and $\UW_1(X_n)\leq \UW_1(Y^e_n)+1$ assuming the cubulation of $X_n$ is fine enough.
Since $\{\UW_1(X_n)\}$ is unbounded and $\{\UW_1(\widetilde{X_n})\}$ is bounded, it follows that $\{\UW_1(Y^e_n)\}$ is unbounded and $\{\UW_1(\widetilde{Y^e_n})\}$ is bounded.
However, $Y^e_n$ is not a Riemannian polyhedron, whereas $Y_n$ is.
Since $\{\UW_1(Y^e_n)\}$ is unbounded and the identity map $Y_n\rightarrow Y^e_n$ is distance non-increasing, $\{\UW_1(Y_n)\}$ is unbounded.
Therefore, it remains to show that $\{\UW_1(\widetilde{Y_n})\}$ is bounded.
We already know that $\{\UW_1(\widetilde{Y^e_n})\}$ is bounded.
 It is enough to show that we can choose a cubulation of $X_n$ so that the difference between $\widetilde{Y^e_n}$ and $\widetilde{Y_n}$ is uniformly close to each other for all $n$.

Suppose  $a,b\in \widetilde{Y_n}$. We claim that there is a $C>0$ such that 
\[d_{\widetilde{Y_n}}(a,b)\leq C\cdot d_{\widetilde{Y^e_n}}(a,b) \qquad \forall n.
\]

To prove the claim, we take a length minimizing geodesic $\gamma$ between $a$ and $b$ in $\widetilde{X_n}$ whose length is $d_{\widetilde{Y^e_n}}(a,b)$.
Then we  apply Lemma~\ref{l:pushing cycle} repeatedly to push $\gamma$  consecutively onto lower and lower skeleta of each cube in $\widetilde{X_n}$, acquiring some constant factor to its length each time, to get a path $\gamma'$ between $a$ and $b$ in the two-skeleton $\widetilde{Y_n}$.
By construction, $\length(\gamma')\leq C_n\cdot d_{\widetilde{Y^e_n}}(a,b)$ where $C_n$ depends only on the $\dim \widetilde{X_n}$.
Since the sequence $\{\dim{\widetilde{X_n}}\}$ is bounded, we have $C:=\max_n{C_n}$ is finite.
Since $d_{\widetilde{Y_n}}(a,b)\leq \length(\gamma')$, we obtain $d_{\widetilde{Y_n}}(a,b)\leq C\cdot d_{\widetilde{Y^e_n}}(a,b)$ for all $n$.

Since $a,b\in \widetilde{Y_n}$ were chosen arbitrarily, we conclude
    \begin{align*}
        \UW_1(\widetilde{Y_n})\leq C \cdot \UW_1(\widetilde{Y^e_n}) \qquad \forall n.
\end{align*}

Since the sequence $\{\UW_1(\widetilde{Y^e_n})\}$ is bounded, $\{\UW_1(\widetilde{Y_n})\}$ is bounded.

\end{proof}
Proposition~\ref{p:mfld} and Proposition~\ref{p:2dim} together yield the following.
\begin{theorem}[$4$-Manifold reduction]\label{t:4dim}
Suppose that $\{X_n\}_{n=1}^\infty$ is a sequence of compact Riemannian polyhedra such that the ratio $\displaystyle\frac{\UW_1(X_n)}{\UW_1(\widetilde{X_n})}$ is unbounded and $\dim(X_n)$ is bounded. Then we can construct a related sequence $\{Z_n\}_{n=1}^\infty$ of closed Riemannian 4-manifolds such that the ratio $\displaystyle\frac{\UW_1(Z_n)}{\UW_1(\widetilde{Z_n})}$ is unbounded.
\end{theorem}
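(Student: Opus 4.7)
The plan is simply to chain together the two preceding reductions. Starting from the hypothesized sequence $\{X_n\}$ of compact Riemannian polyhedra of bounded dimension with $\UW_1(X_n)/\UW_1(\widetilde{X_n})$ unbounded, I first apply Proposition~\ref{p:2dim} (the $2$-dimensional reduction) to produce a related sequence $\{Y_n\}$ of compact $2$-dimensional Riemannian polyhedra for which the ratio $\UW_1(Y_n)/\UW_1(\widetilde{Y_n})$ is still unbounded. Note that $\{\dim(Y_n)\}$ is trivially bounded, being identically $2$.

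Next I feed $\{Y_n\}$ into Proposition~\ref{p:mfld} (the manifold reduction). Since each $Y_n$ is a compact Riemannian polyhedron with $\dim(Y_n)=2 \geq 2$, the hypothesis of Proposition~\ref{p:mfld} is satisfied. The conclusion yields a related sequence $\{Z_n\}$ of closed Riemannian manifolds with $\dim(Z_n) = 2\dim(Y_n) = 4$, such that the ratio $\UW_1(Z_n)/\UW_1(\widetilde{Z_n})$ is unbounded.

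Since no further step is required, there is no genuine obstacle here; the work has already been carried out in the two preceding propositions. The only thing to double-check is that the dimension hypothesis needed for Proposition~\ref{p:mfld} (namely $n\geq 2$, as stated in Lemma~\ref{l:complex to mfld}) is satisfied when the input is the $2$-dimensional sequence $\{Y_n\}$, which it is. Composing the two reductions therefore directly produces the desired sequence of closed Riemannian $4$-manifolds, completing the proof.
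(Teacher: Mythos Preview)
Your proposal is correct and follows essentially the same approach as the paper: apply Proposition~\ref{p:2dim} to pass to $2$-dimensional polyhedra, then apply Proposition~\ref{p:mfld} to obtain closed $4$-manifolds. Your additional remark verifying the $n\geq 2$ hypothesis of Lemma~\ref{l:complex to mfld} is a reasonable sanity check that the paper leaves implicit.
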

\begin{proof}
    We first apply Proposition~\ref{p:2dim} to obtain a sequence of 2-dimensional Riemannian polyhedra $\{Y_n\}$ such that $\frac{\UW_1(Y_n)}{\UW_1(\widetilde{Y_n})}$ is unbounded. Then we can apply Proposition~\ref{p:mfld} to obtain a sequence of closed Riemannian manifolds $\{Z_n\}$ such that $\dim(Z_n)=2\dim(Y_n)=4$ and the ratio is $\frac{\UW_1(Z_n)}{\UW_1(\widetilde{Z_n})}$ is unbounded.
\end{proof}

\bibliography{uwbib}
\bibliographystyle{amsalpha}
\end{document}